\newcommand{\vectornorm}[1]{\parallel\hspace{-.1cm}#1\hspace{-.1cm}\parallel}
\newenvironment{myindentpar}[1]%
{\begin{list}{}%
         {\setlength{\leftmargin}{#1}}%
         \item[]%
}
{\end{list}}
\newtheorem{thm}{Theorem}[section]
\newtheorem{cor}{Corollary}[section]
\newtheorem{prop}{Proposition}[section]
\newtheorem{remark}{Remark}[section]
\newtheorem{example}{Example}
\title{On the persistence and global stability of mass-action systems}
\author{Casian Pantea\thanks{
Department of Electrical and Electronic Engineering, Imperial College London
({\tt c.pantea@imperial.ac.uk}). The author's research was supported through grant NIH R01GM086881.}}
\begin{document}

\maketitle

\begin{abstract}
This paper concerns the long-term behavior of population systems, and
in particular of chemical reaction systems, modeled by deterministic 
mass-action kinetics. We approach
two important open problems in the field of Chemical Reaction
Network Theory, the Persistence Conjecture and the Global Attractor
Conjecture. We study the persistence of a large class of
networks called {\em lower-endotactic} and in particular, we 
show that in weakly reversible mass-action systems 
with  two-dimensional stoichiometric subspace all bounded trajectories
are persistent. Moreover, we use these ideas to show that the Global
Attractor Conjecture is true for systems with
three-dimensional stoichiometric subspace.
\end{abstract}

\begin{keywords} 
chemical reaction networks, mass-action, Persistence Conjecture, Global Attractor Conjecture, persistence, global stability, interaction networks, population processes, polynomial dynamical systems
\end{keywords}

\begin{AMS}
37N25, 92C42,  37C10, 80A30, 92D25 
\end{AMS}

\begin{section}{Introduction}\label{sec:intro}
Mass-action systems are a large class of nonlinear differential
equations, widely used in the modeling of interaction networks in 
chemistry, biology and engineering. Due to the high 
complexity of dynamical systems arising from nonlinear interactions,
it is very difficult, if not impossible, to create general
mathematical criteria about qualitative properties of such systems, like 
existence of positive equilibria, stability properties of equilibria
or persistence (non-extinction) of variables. 
However, a fertile theory that answers this type of questions for
mass-action systems has been developed over the last 40 years in the
context of chemical reaction systems. 
Generally termed “Chemical Reaction Network Theory” 
\cite{feinberg:1972, feinberg:lectures, feinberg:chem_eng_sci,
  feinberg_horn, gunawardena, horn, horn:1974, horn_jackson}  
this field of research originated with the seminal work of Fritz Horn, Roy Jackson and Martin
Feinberg \cite{feinberg:1972, horn, horn_jackson}  
and describes the surprisingly stable dynamic behavior of
large classes of mass-action systems, {\em independently of the values
  of the parameters present in the system}. This fact is very relevant, 
since the exact values of the system parameters
are typically unknown in practical applications.  
Although the results in this paper will be applicable to general population
systems driven by mass-action kinetics, they will be  
developed within the frame of Chemical Reaction
Network Theory.

A large part of this paper is devoted to {\em persistence} properties of mass-action 
systems. A dynamical system on $\mathbb R^n_{\ge 0}$ is called persistent
if forward trajectories that start in the interior of the positive orthant do not
approach the boundary of $\mathbb R^n_{\ge 0}$ (see section
\ref{sec:persdef} for a rigorous definition). Note that, throughout this paper,
{\em trajectory} will always mean {\em bounded trajectory}. For systems with
bounded trajectories, this is equivalent to saying that no trajectories
with positive initial condition have $\omega$-limit points 
on the boundary of $\mathbb R^n_{\ge 0}.$ 
Persistence answers important questions regarding dynamic properties of biochemical
systems, ecosystems, or infectious diseases, e.g. will each chemical
species be available at all future times; or will a species become
extinct in an ecosystem; or will an infection die off.
One of the major open questions of Chemical Reaction Network Theory is
the following:
\smallskip 

\noindent {\bf Persistence Conjecture }\cite{craciun_nazarov_pantea}.
Any weakly reversible mass-action system is persistent. 

\smallskip

\noindent A weakly reversible mass-action system is one for which its directed reaction 
graph has strongly connected components (Definition \ref{def:WR}). 
A version of this conjecture was first mentioned by Feinberg
in \cite[Remark 6.1.E]{feinberg:chem_eng_sci}; that version 
only requires that no trajectory with positive initial condition 
converges to a boundary point. A stronger version of the Persistence
Conjecture (called {\em the Extended Persistence Conjecture}) was
formulated by Craciun, Nazarov and Pantea in
\cite{craciun_nazarov_pantea} and it was shown to be true for
two-species systems. Moreover, in that case, weakly reversible
mass-action systems are not only persistent, but also {\em permanent}
(all trajectories originating in the interior of $\mathbb R^n_{\ge 0}$
eventually enter a fixed compact subset of the interior of $\mathbb R^n_{\ge 0}$). 

In recent approaches to the Persistence Conjecture,
the behavior of weakly reversible mass-action systems near the faces of
their {\em stoichiometric compatibility classes} (minimal linear invariant
subsets) was considered. It is known that $\omega$-limit points may only lie on
faces of the stoichiometric compatibility class that are associated
with a {\em semilocking set} \cite{anderson:primul} 
(see also \cite[Remark 6.1.E]{feinberg:chem_eng_sci}), or  {\em
  siphon} in the Petri net literature \cite{angeli_leenheer_sontag, shiu_sturmfels}.
Anderson \cite{anderson:primul} and 
Craciun, Dickenstein, Shiu and Sturmfels
\cite{craciun_dickenstein_shiu_sturmfels} 
showed that vertices of the stoichiometric compatibility class cannot
be $\omega$-limit points. Moreover, 
Anderson and Shiu \cite{anderson_shiu} proved that for a weakly reversible mass-action system, the trajectories are, in some sense,  repelled away from 
codimension-one faces of the stoichiometric compatibility class. 

In this paper we prove the following version of the Persistence
Conjecture for systems with two-dimensional stoichiometric
compatibility classes (Theorem \ref{thm:pers}):

\smallskip
\noindent {\bf Theorem 5.1.} {\em Any $\kappa$-variable mass-action
 system with bounded trajectories, two-dimensional stoichiometric compatibility classes 
and lower-endotactic stoichiometric subnetworks is persistent.}
\smallskip

\noindent Here a {\em stoichiometric subnetwork} is a union of connected components of the
reaction graph (see Definition \ref{def:canonSub}) and 
the requirement of {\em lower-endotactic} (Definition \ref{def:endo})
stoichiometric subnetworks is less restrictive than that of weak
reversibility. We suggest that the hypothesis of ``lower-endotactic''
arises naturally in the context of persistence of mass-action systems.
Moreover, {\em $\kappa$-variable mass-action}
is a generalization of mass-action where each reaction rate parameter is allowed to vary 
within a compact subset of $(0,\infty)$ (see Definition
\ref{def:massAct}). Therefore this theorem implies the following:

\smallskip
\noindent {\bf Corollary.} {\em Any weakly reversible mass-action system
with two-dimensional stoichiometric compatibility classes and bounded
trajectories is persistent.}  
\smallskip

\noindent Note that our proof of Theorem 5.1. above (and of its corollary) requires
the hypothesis of bounded trajectories. However, it has been conjectured
that all trajectories of weakly reversible mass-action systems are
bounded \cite{anderson:oneLCbd}, and this conjecture has recently been
proved for networks whose reaction graph has a single connected
component \cite{anderson:oneLCbd}.  Also, a stronger statement
is known to be true for two-species networks: any {\em endotactic,
  $\kappa$-variable} mass-action system with two species has bounded trajectories 
\cite{craciun_nazarov_pantea}. 

The Persistence Conjecture is strongly related to another conjecture
which is often considered the most important open problem in the
field of Chemical Reaction Network Theory \cite{anderson:oneLC, 
anderson_shiu, craciun_dickenstein_shiu_sturmfels,
craciun_nazarov_pantea}, namely the Global Attractor
Conjecture. This conjecture was first formulated by Horn
\cite{horn:1974} and concerns the long-term behavior of 
  complex-balanced systems, i.e., systems that admit a positive
{\em complex-balanced equilibrium} (see Definition \ref{def:compBal}). 
Horn and Jackson showed that if a mass-action
system is complex-balanced then there exists a unique positive equilibrium in
each stoichiometric compatibility class and this equilibrium is
complex-balanced \cite{horn_jackson}. Moreover, each such equilibrium
is locally asymptotically stable in its stoichiometric compatibility
class due to the existence of a strict Lyapunov function \cite{horn_jackson}. In two
subsequent papers \cite{feinberg:1972, horn} Feinberg and Horn
showed that weakly reversible mass-action systems which are also 
{\em deficiency zero} (Definition \ref{def:deficiency}) are
complex-balanced. This fact is remarkable since it reveals a wide
class of reaction systems that are complex-balanced only because of
their structure and regardless of parameter values. For a
self-contained treatment of Chemical Reaction Network Theory,
including the results mentioned above, the reader is referred to \cite{feinberg:lectures}. 

The Lyapunov function of Horn and Jackson does not guarantee {\em global}
stability for a positive equilibrium relative to the interior of its
compatibility class. This fact is the object of the Global Attractor Conjecture:

\smallskip
\noindent {\bf Global Attractor Conjecture.} {\em In a complex-balanced
mass-action system, the unique positive equilibrium of a
stoichiometric compatibility class is a global attractor of
the interior of that class.}
\smallskip

\noindent It is known \cite{feinberg:lectures} that complex-balanced systems are
necessarily weakly reversible. On the other hand, trajectories of
complex-balanced systems converge to the set of equilibria 
\cite{anderson:primul, siegel_maclean, sontag:tcell}, so it follows that the
Persistence Conjecture implies the Global Attractor Conjecture.   

A series of partial results towards a proof of the Global Attractor
Conjecture have been obtained in recent years. It is known that the
conjecture is true for systems with two-dimensional stoichiometric
compatibility classes (\cite{anderson_shiu}; see also the recent work of
Siegel and Johnston \cite{siegel_johnston}), and for three-species
systems \cite{craciun_nazarov_pantea}. Recently, 
Anderson proved that the conjecture holds if the reaction graph has a
single connected component \cite{anderson:oneLC}.

In this paper we prove the Global Attractor Conjecture for systems
with three-dimensional stoichiometric compatibility classes (Theorem \ref{thm:gac}):

\smallskip
\noindent {\bf Theorem 6.3.} {\em Consider a complex-balanced weakly reversible
mass-action system having stoichiometric compatibility classes of
dimension three. Then, for any positive initial condition $c_0,$ the
solution $c(t)$ converges to the unique positive equilibrium which is
stoichiometrically compatible with $c_0.$} 
\smallskip
 
Aside from being significant in the field of polynomial dynamical systems
and relevant in important biological models \cite{gopalkrishnan,
gnacadja, siegel_maclean, sontag:tcell}, Chemical Reaction Network Theory, 
and in particular the two conjectures discussed above, have ramifications in
other well-established areas of mathematics. For example, 
\cite{craciun_dickenstein_shiu_sturmfels} stresses 
the connection with toric geometry and computational
algebra; in that work complex-balanced systems are called 
{\em toric dynamical systems} to emphasize their intrinsic algebraic
structure. Also, \cite{perez} studies the rich algebraic structure
of biochemical reaction systems with {\em toric steady states}.
Furthermore, the unique
positive equilibrium in a stoichiometric compatibility class of a 
complex-balanced system is sometimes called  the {\em Birch point} in
relation to Birch's Theorem from algebraic statistics
\cite{craciun_dickenstein_shiu_sturmfels, pachter_sturmfels}.  
 
This paper is organized as follows. 
After a preliminary section of terminology and notation, we
introduce the lower-endotactic networks in section \ref{sec:endo} and
follow with a discussion of our main technical tool, the 
{\em 2D-reduced mass-action system} in section \ref{sec:2Dred}. The
main persistence result of the paper is contained in section
\ref{sec:pers} (Theorem \ref{thm:pers}) and our result on the Global
Attractor Conjecture (Theorem \ref{thm:gac}) is proved in section \ref{sec:gac}. 
A critical part in the proof of the latter theorem resides in the
result of Theorem \ref{thm:repel}, which analyzes the behavior of 
weakly reversible mass-action systems
near codimension-two faces of stoichiometric compatibility classes.

\end{section}

\begin{section}{Preliminaries}\label{sec:prelim}
A chemical reaction network is usually given by a finite list of
reactions that involve a finite set of chemical {\em species.} An example
with four species $A, B, C$ and $D$ and five reactions is
given in (\ref{eq:exNet}). 
\begin{equation}\label{eq:exNet}
B+D\displaystyle \mathop{\rightleftharpoons}^{} A+C
\displaystyle \mathop{\to}^{}_{} A+B
\displaystyle \mathop{\to}^{}_{} C+D \qquad
2A\displaystyle \mathop{\to}^{} A+D\displaystyle \mathop{\gets}^{}2D
\end{equation}
The interpretation of the reaction $A+B\to C+D$,
for instance, is that one molecule of species $A$ combines with one
molecule of species $B$ to produce one molecule of each of the species
$C$ and $D$. The objects on both sides of a reaction are formal linear combinations
of species and are called {\em complexes}. According to the
direction of the reaction arrow, a complex is either {\em source} or
{\em target}. This way, the reaction $A+B\to C+D$ in (\ref{eq:exNet}) has 
$A+B$ as source complex and $C+D$ as target complex.
The concentrations $c_A, c_B, c_C$ and $c_D$ vary with time
by means of a set of ordinary differential equations,
which we will explain shortly. In this preliminary section of the
paper we review the standard concepts of
Chemical Reaction Network Theory (see \cite{feinberg:lectures}) and
introduce some new terminology that will be useful further
on. In what follows, the set of nonnegative, respectively
strictly positive real numbers are denoted by $\mathbb R_{\geq 0}$ and 
$\mathbb R_{>0}$ . For an integer $n\ge 1$ we call $\mathbb R_{>0}^n$ the {\em positive orthant}.
The boundary of a set $K\subset \mathbb R^n$ will be denoted by $\partial K$ and the
convex hull of $K$ will be denoted by $\mathrm{conv}(K).$ Also, we
will denote the transpose of a matrix $A$ by $A^t.$

\begin{subsection}{Reaction networks} \label{sec:CRN}
If $I$ is a finite set then we denote by ${\mathbb Z}_{\geq 0}^I$ and 
${\mathbb R}_{\geq 0}^I$  the set of all formal sums
$\alpha=\displaystyle\sum_{i \in I}\alpha_ii$ where $\alpha_i$ are
nonnegative integers, respectively nonnegative reals. 

\begin{definition}\label{def:CRN} 
A {\em chemical reaction network} is a triple $(\mathcal{S}, \mathcal{C},
\mathcal{R})$, where $\mathcal{S}$ is the set of
species, $\mathcal{C}\subseteq \mathbb Z_{\geq 0}^{\mathcal
  S}$ is the set of complexes, and $\mathcal{R}$ is a relation on
$\mathcal{C}$, denoted $P\to P'$, representing the set of
reactions of the network. The reaction set $\mathcal R$ cannot
contain elements of the form $P\to P$ and each complex in $\mathcal C$ is
required to appear in at least one reaction.
\end{definition}

For simplicity, we
will often denote a reaction network by a single letter, for instance
$\mathcal N = (\mathcal S,\mathcal C,\mathcal R).$
For technical reasons we have chosen to neglect a third
requirement that is usually included in the definition of a reaction network
(see \cite{feinberg:lectures}): 
each species appears in at least one complex. This condition
is not essential in the setting of this paper. 

In (\ref{eq:exNet}) the set of species is
$\mathcal{S}=\{A, B, C, D\}$, and the set of complexes is
$\mathcal{C}=\{B+D,A+C,A+B,C+D,2A,A+D,2D\}.$  

Once we fix an order among the species, any complex may be viewed as a
column vector of dimension equal to the number of elements of
$\mathcal S.$
For example, the complexes $A+B$ and $2D$ in (\ref{eq:exNet}) may be represented by
the vectors $(1\ 1\ 0\ 0)^t,$ and $(0\ 0\ 0\ 2)^t.$ With this
identification in place, we may now define the {\em reaction vector}
of a reaction  $P\to P'\in\mathcal R$ to be $P'-P.$ 

\begin{definition}\label{def:stoichSub}
The {\em stoichiometric subspace} of the reaction network $\mathcal N = (\mathcal S,
\mathcal C, \mathcal R)$ is  
$S= \mathrm{span}\{P'-P\mid P\to P'\in \mathcal R\}.$
\end{definition}

\begin{example}\label{ex:stoich}
The stoichiometric subspace of the reaction network (\ref{eq:exNet})
is the column space of the {\em stoichiometric matrix} 
$$
A=\left(
\begin{tabular}{rrrrrr}
  1 &-1 &  0&-1&-1 &  1\\
-1 &  1 &  1&-1&  0 &  0\\
  1 &-1 &-1&  1&  0 &  0\\
-1 &  1 &  0&  1&  1 &-1\\
\end{tabular}
\right).
$$ 
It is easy to see that $S=\{(a,\ b,\ -b,\ -a)^t | (a,b)\in \mathbb R^2\}.$
\end{example}

Any reaction network $\mathcal N$ can be viewed as a directed graph whose
vertices are the complexes of $\mathcal N$ and whose edges correspond to
reactions of $\mathcal N$. Each connected component of this graph is
called a {\em linkage class} of $\mathcal N.$ 
\begin{definition}\label{def:WR} 
A reaction network $\mathcal N$ is called {\em weakly reversible} if its
associated directed graph has strongly connected components.
\end{definition}

In other words, $\mathcal N$ is weakly reversible if whenever there exists
a directed arrow pathway (consisting of one or more reaction arrows)
from one complex to another, there also exists a directed arrow
pathway from the second complex back to the first. 
\end{subsection}

\begin{subsection}{Reaction systems}\label{sec:RSyst} 
Throughout this paper we let $n$ denote the number of species of a
reaction network $\mathcal N=(\mathcal S, \mathcal C, \mathcal R),$ we fix an order among the
species, and we denote $\mathcal S=\{X_1,\ldots, X_n\}.$ We also let   
$c(t)\in \mathbb R^{\mathcal S}\cong\mathbb R^{n}$ denote the (column) vector of
species concentrations at time $t\ge 0$. From here on,
``vector'' or ``point of $\mathbb R^n$'' will always mean ``column
vector'', even if, for simplicity, the notation $^t$ may not always be used.
The concentration vector $c(t)$ is governed by a set of ordinary differential equations
that involve a {\em reaction rate function} for each reaction in ${\mathcal R}.$

\begin{definition}\label{def:reactionSystem}
A {\em (non-autonomous) reaction system} is a quadruple $({\mathcal S}, {\mathcal
  C}, {\mathcal R}, K)$ where $\mathcal N=({\mathcal S}, {\mathcal C}, {\mathcal R})$ is a
reaction network with $n$ species and 
$K:\mathbb R_{\ge 0}\times \mathbb R_{\ge 0}^n\to \mathbb R_{>0}^{\mathcal
  R}$ is a piecewise differentiable function called the kinetics of the system.
The component $K_{P\to P'}$ of $K$ is called the rate function of
reaction $P\to P'.$ Letting $P = (m_1,\ldots, m_n)$ and ${\bf
  x}=(x_1,\ldots, x_n),$ $K_{P\to P'}$ is assumed to satisfy, for any
$t\ge 0,$ the following: if $x_i=0$ and $m_i\neq 0$ then $K_{P\to P'}(t, {\bf x})=0.$  
The dynamics of the system is given by the following system of
differential equations for
the concentration vector $c(t)$:
\begin{equation}\label{eq:reactionSystem}
\dot c(t) = \sum_{P\to P'}K_{P\to P'}(t,c(t))(P'-P).
\end{equation}
\end{definition} 

Note that we will often use the short notation $(\mathcal N, K)$ for a
reaction system. 

The regularity condition on $K$ may be replaced by any other condition
that guarantees uniqueness of solutions for (\ref{eq:reactionSystem}). 
Sometimes, additional properties are required of $K$ 
\cite{anderson_shiu, banaji_craciun:1, banaji_craciun:2}. For example, it
is commonly assumed that if the $i$th species is not a reactant in
$P\to P'$ (i.e. $m_i=0$) then $K_{P\to P'}$ does not depend on $x_i.$ 
Another widespread assumption is that $K$ is increasing with respect to 
reactant concentrations, i.e. $\frac{\partial}{\partial
  x_i}K_{P\to P'}\ge 0$ if $m_i\neq 0.$ These conditions are automatically satisfied 
for the kinetics treated in this paper.
 
If $c_0\in\mathbb R^n_{\ge 0}$ we let
$$T(c_0)=\{c(t)\mid t\ge 0, c(0)=c_0\}$$
denote the
{\em trajectory of $(\mathcal N, K)$ with initial condition $c_0.$} If
$K(t,{\bf x})=K({\bf x})$ does not depend explicitly on time, we
say that $c_*\in\mathbb R^n_{\ge 0}$ is an equilibrium of the
reaction system $(\mathcal N, K)$ if it is an equilibrium of the
corresponding differential equations (\ref{eq:reactionSystem}).

Note that the condition on $K_{P\to P'}$ imposed in Definition
\ref{def:reactionSystem} makes the nonnegative orthant $\mathbb
R^n_{\ge 0}$ forward invariant for (\ref{eq:reactionSystem}). 
Under mild additional assumptions on $K,$ the positive orthant
$\mathbb R^n_{>0}$ is also forward-invariant for 
(\ref{eq:reactionSystem}) (see \cite{sontag:tcell}). For example, this
will be the case for {\em $\kappa$-variable mass-action kinetics},
the main type of kinetics considered in this paper (Definition \ref{def:massAct}).

Integrating (\ref{eq:reactionSystem}) yields 
$$c(t) = c(0) + \sum_{P\to P'} \left(\int_{0}^t K_{P\to P'}(s,c(s))ds\right) (P'-P)$$
and it follows that $c(t)$ is contained in the affine subspace $c(0)+S$ for all $t\ge
0.$ Combining this with the preceding observation we see that 
$(c(0)+S)\cap \mathbb R_{\ge 0}^n$ is forward invariant for
(\ref{eq:reactionSystem}).
\begin{definition}\label{def:compClass}
Let $c_0\in \mathbb R^n.$ The polyhedron $(c_0+S)\cap \mathbb R_{\ge
  0}^n$ is called the {\em stoichiometric compatibility class of
  $c_0.$} 
\end{definition}

Note that, throughout this paper, ``polyhedron'' will always mean
``convex polyhedron'', i.e., an intersection of finitely many half-spaces.
To prepare for the next definition we introduce the following notation: given two
vectors $u,v\in\mathbb R^n_{\ge 0},$ we denote 
$u^{v}=\displaystyle\prod_{i=1}^nu_i^{v_i}$, with the convention $0^{0}=1$.

\begin{definition}[\cite{craciun_nazarov_pantea}]\label{def:massAct}
A {\em $\kappa$-variable mass-action system} is a reaction system $(\mathcal N,
K)$ where $\mathcal N=(\mathcal S, \mathcal C, \mathcal R)$ 
and the rate function of $P\to P'\in\mathcal R$ is given by 
\begin{equation}\label{eq:MAKin}
K_{P\to P'}(t, {\bf x})=\kappa_{P\to P'}(t){\bf x}^P.
\end{equation}
Here $\kappa:\mathbb R_{\geq 0}\to (\eta, 1/\eta)^{\mathcal R}$ for some
$\eta < 1$ is a piecewise differentiable function called the {\em rate-constant function.} 
\end{definition} 

To emphasize the rate-constant function, we denote a
$\kappa$-variable mass-action system by $(\mathcal S,\mathcal C, \mathcal R,
\kappa),$ or by $(\mathcal N, \kappa).$
Note that if the rate-constant function is fixed in
time, $\kappa$-variable mass-action becomes the usual
mass-action. A few biological examples of $\kappa$-variable mass-action
models that are not mass-action are presented in \cite{craciun_nazarov_pantea}.

Therefore, a $\kappa$-variable mass-action system gives rise to the
following non-autonomous, and usually nonlinear, 
system of coupled differential equations:
\begin{equation}\label{eq:varMassAct}
\dot c(t) = \sum_{P\to P'}\kappa_{P\to P'}(t) c(t)^P (P'-P).
\end{equation}

\begin{example}\label{ex:MAex}
 We endow the reaction network (\ref{eq:exNet}) with
  $\kappa$-variable mass-action kinetics of rate-constant
  function specified on the reaction arrows in (\ref{ex:MA}).
\begin{equation}\label{ex:MA}
B+D\displaystyle \mathop{\rightleftharpoons}^{\kappa_1(t)}_{\kappa_2(t)} A+C
\displaystyle \mathop{\to}^{\kappa_3(t)} A+B
\displaystyle \mathop{\to}^{\kappa_4(t)} C+D \qquad
2A\displaystyle \mathop{\to}^{\kappa_5(t)} A+D\displaystyle \mathop{\gets}^{\kappa_6(t)}2D
\end{equation}
We have $c(t)=(c_A(t),\ c_B(t),\ c_C(t),\ c_D(t))$ and note that, for
example, $c(t)^{A+B}=c(t)^{(1\ 1\ 0\ 0)}=c_A(t)c_B(t).$ From
(\ref{eq:varMassAct}) we have
$$
\dot c = \kappa_1(t)c_Bc_DA_1+\kappa_2(t)c_Ac_CA_2
+\kappa_3(t)c_Ac_CA_3+\kappa_4(t)c_Ac_BA_4+\kappa_5(t)c_A^2A_5+\kappa_6(t)c_D^2A_6
$$
for all $t\ge 0,$ where $A_i$ is column $i$ of the stoichiometric
matrix $A$ given in (\ref{ex:stoich}), i.e., the reaction vector of
reaction $i.$ Therefore the differential equations corresponding to
(\ref{ex:MA}) are
\begin{eqnarray}\label{eq:exMA}
\dot c_A &=&\kappa_1(t)c_Bc_D-\kappa_2(t)c_Ac_C-\kappa_4(t)c_Ac_B
-\kappa_5(t)c_A^2+\kappa_6(t)c_D^2\\\nonumber
\dot c_B &=&-\kappa_1(t)c_Bc_D+\kappa_2(t)c_Ac_C+\kappa_3(t)c_Ac_C-\kappa_4(t)c_Ac_B\\\nonumber
\dot c_C &=&\kappa_1(t)c_Bc_D-\kappa_2(t)c_Ac_C-\kappa_3(t)c_Ac_C+\kappa_4(t)c_Ac_B\\\nonumber
\dot c_D &=& -\kappa_1(t)c_Bc_D+\kappa_2(t)c_Ac_C+\kappa_4(t)c_Ac_B
+\kappa_5(t)c_A^2-\kappa_6(t)c_D^2,
\end{eqnarray}
\end{example}
\end{subsection}

\begin{subsection}{Sums of reaction systems}\label{sec:sums}
A reaction network $(\mathcal S, \mathcal C, \mathcal R)$ is called a {\em subnetwork} of
$(\mathcal S,' \mathcal C', \mathcal R')$ if $\mathcal S\subseteq \mathcal S'$, $\mathcal C\subseteq
\mathcal C'$ and $\mathcal R\subseteq \mathcal R'.$ If $\mathcal R'$ has an associated
kinetics $K$ then restricting $K$ to reactions of $\mathcal R$ defines a kinetics for
$\mathcal R.$ On the other hand, if $\mathcal N_s=(\mathcal S_s, \mathcal C_s, \mathcal
R_s)$, $s\in\{1,\ldots,p\}$ are reaction networks, their {\em union},
denoted by  $\bigcup_{s=1}^p (\mathcal S_s, \mathcal C_s, \mathcal R_s)$ or simply
by $\bigcup_{s=1}^p \mathcal N_s$, and defined as the triple
$(\bigcup_{s=1}^p \mathcal S_s, \bigcup_{s=1}^p\mathcal C_s,\bigcup_{s=1}^p \mathcal R_s)$
is also a reaction network. If each $\mathcal N_s$ has an associated
kinetics $K_s,$ we can define a kinetics for $\bigcup_{s=1}^p \mathcal N_s$ by
simply adding all $K_s.$  

\begin{definition}\label{def:sums}
The {\em sum of the reaction systems $(\mathcal S_s, \mathcal C_s, \mathcal R_s, K_s)$}
is the reaction system $(\mathcal S, \mathcal C, \mathcal R, K)$ where $(\mathcal S, \mathcal C, \mathcal
R)=\bigcup_{s=1}^p (\mathcal S_s, \mathcal C_s, \mathcal R_s)$ and 
$$K_{P\to P'}(t,{\bf x})=\sum_{\{s:P\to P'\in\mathcal R_s\}} K_{s,P\to
  P'}(t,\bf x)$$ 
for and all $(t,{\bf x})\in\mathbb R_{\ge 0}\times \mathbb
R^n_{\ge 0}.$ We will denote this $(\mathcal S, \mathcal C, \mathcal R, K)$ by  
$\bigcup_{s=1}^p(\mathcal S_s, \mathcal C_s, \mathcal R_s, K_s)$ or simply by
$\bigcup_{s=1}^p(\mathcal N_s, K_s),$ where $\mathcal N_s=(\mathcal S_s,\mathcal
C_s,\mathcal R_s).$
\end{definition}

For example,  any reaction system is the
sum of the reaction systems corresponding to its linkage classes.
Similarly, any reaction system is the sum of the reaction systems
corresponding to its {\em stoichiometric subnetworks}, which we define next.

\begin{definition}\label{def:canonSub}
A reaction network $(\mathcal S, \mathcal C, \mathcal R)$ with 
stoichiometric subspace $S$ can be written uniquely as a union of subnetworks
$$(\mathcal S, \mathcal C, \mathcal R) =
\bigcup_{s=1}^p(\mathcal S_s, \mathcal C_s, \mathcal R_s)$$ 
where $\{{\mathcal C}_s\}_{s\in\{1,\ldots,p\}}$ is a partition of $\cal C$
such that two complexes in $\cal C$ are in the same block of the
partition if and only if their difference is in $S$.
We call each $(\mathcal S_s, \mathcal C_s, \mathcal R_s)$ a {\em
  stoichiometric subnetwork of 
$(\mathcal S, \mathcal C, \mathcal R)$.} 
\end{definition} 

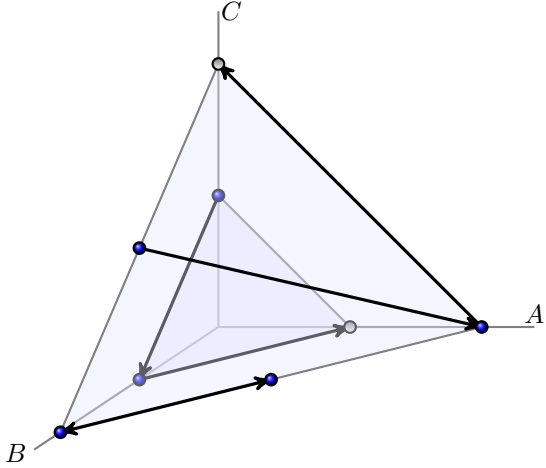
\begin{figure}
\begin{center}
\begin{tikzpicture}[thick, >=stealth', scale = .35] 

\draw[gray] (0,0) -- (12,0); 
\draw[gray] (0,0) -- (0,12); 
\draw[gray] (0,0) -- (-7,-4.66);

\draw[gray, fill=blue!11!white, fill opacity=0.5] (-3,-2) -- (5,0)--(0,5)--(-3,-2);

\draw[->, very thick] (-3,-2)--(5,0);
\draw[->, very thick] (0,5)--(-3,-2);

\shadedraw [shading=ball] (0,5) circle (.22cm);
\shadedraw [shading=ball] (-3,-2) circle (.22cm);
\shadedraw [] (5,0) circle (.22cm);

\draw[gray, fill=blue!9!white, fill opacity=0.4] (-6,-4) -- (10,0)--(0,10)--(-6,-4); 

\draw[<->, very thick] (-6,-4)--(2,-2);
\draw[->, very thick] (-3,3)--(10,0);
\draw[->, very thick] (10,0)--(0,10);

\shadedraw [shading=ball] (2,-2) circle (.22cm);
\shadedraw [shading=ball] (-3,3) circle (.22cm);
\shadedraw [shading=ball] (-6,-4) circle (.22cm);
\shadedraw [shading=ball] (10,0) circle (.22cm);
\shadedraw [] (0,10) circle (.22cm);

\node at  (12,.5) {$A$};
\node at  (-7.7,-4.8) {$B$};
\node at  (.5,12) {$C$};

\end{tikzpicture} 
\caption{Stoichiometric subnetworks for Example \ref{ex:stoichSubnet}.}\label{fig:stoichSubnet}
\end{center}
\end{figure}

\begin{example}\label{ex:stoichSubnet}
The diagram in Figure \ref{fig:stoichSubnet} represents the reaction
network 
$$C\to B\to A,\quad 2B\rightleftharpoons A+B,\quad B+C\to 2A\to 2C.$$
This reaction network has three linkage classes, and two
stoichiometric subnetworks
$$\{C\to B\to A\} \text{ and } \{2B\rightleftharpoons A+B,\quad B+C\to 2A\to 2C\}.$$ 
\end{example}

Note that there exist vectors $a_1,\ldots, a_p\in\mathbb R^n$ such that  for
all $s\in\{1,\ldots, p\},$ we have  $\mathcal C_s\subset a_s+S$ and
the affine subspaces $a_s+S,$ $s\in\{1,\ldots, p\}$ are pairwise disjoint. 
Also note that each stoichiometric subnetwork is a union of linkage classes.

\begin{example}\label{ex:canon}
The reaction network (\ref{eq:exNet}) has two 
linkage classes which belong to 
two different stoichiometric subnetworks since
$(A+B)-2A = (-1,\ 1,\ 0,\ 0)\notin S$ 
(recall $S$ from Example \ref{ex:stoich}).
Therefore the reaction network has exactly two stoichiometric
subnetworks which coincide with its linkage classes.
\end{example}
\end{subsection}

\begin{subsection}{Projected reaction systems}\label{sec:projRS}
For $W\subset \{1,\ldots, n\}$ we define
$$\pi_{W}:\mathbb R^n\to \mathbb R^W$$
to be the  {\em projection onto W}, i.e. the orthogonal projection
onto $\mathbb R^W.$ 
\begin{definition}[\cite{pantea:thesis}; see also \cite{anderson:oneLC}]\label{def:projNet}
Let $\mathcal N = (\mathcal S, \mathcal C, \mathcal R)$ be a reaction network with $\mathcal S =
\{X_1,\ldots, X_n\}$ and let $W\subset \{1,\ldots, n\}.$  
Let $\mathcal S_W=\{X_i\mid i\in W\},$ 
$$\mathcal R_W=\{\pi_W(P)\to \pi_W(P')\mid P\to P'\in \mathcal R, \text{ such that
}\pi_W(P)\neq \pi_W(P')\}$$
and $\mathcal C_W\subset \pi_W(\mathcal C)$ be the set of complexes in $\mathcal R_W.$
The reaction network $(\mathcal S_W, \mathcal C_W, \mathcal R_W),$
is called {\em $\mathcal N$ projected onto $W$} and denoted $\pi_W(\mathcal N).$ 
\end{definition}
 
In other words, the projection of $(\mathcal S, \mathcal C, \mathcal R)$ onto $W$ is obtained by
deleting the species $X_i$, $i\in\complement W$ from all reactions
in $\mathcal R$ and further removing the resulting reactions for which the
source and the target complexes are the same. Here and from now on $\complement W$ denotes 
the complement of $W$ in $\{1,\ldots, n\}$.

If $(\mathcal N, K)=(\mathcal S, \mathcal C, \mathcal R, K)$ is a reaction system and
$c(t)=(c_1(t),\ldots, c_n(t))$ is a solution of the corresponding
system of differential equations 
(\ref{eq:reactionSystem}) with initial condition $c_0\in\mathbb
R^n_{\ge 0},$ then $\pi_W(c)(t)$ is a solution of the following
system of differential equations: 
\begin{equation}\label{eq:projj}
\frac{d}{dt}\pi_W(c) = \sum_{Q\to Q'\in\mathcal R_W}\Bigg(\sum_{
\substack{{\{P\to P'\in\mathcal
  R:}\\ {\pi_W(P)=Q, \pi_W(P')=Q'\}}}} \overline K_{P\to P'}(t, \pi_{W}(c)) \Bigg)(Q'-Q)
\end{equation}
with initial condition $\pi_W(c_0).$
Equation (\ref{eq:projj}) is obtained from (\ref{eq:reactionSystem})
by $(i)$ keeping only the equations for $\dot c_i$ with $i\in W$; 
$(ii)$ writing $K_{P\to P'}(t,c)=\overline K_{P\to P'}(t,\pi_{W}(c))$ 
to illustrate that $c_i,$ $i\in\complement W,$ are written either in
terms of $c_i,$ $i\in W,$ or as functions of $t;$
and $(iii)$
lumping together the rates of reactions $P\to P'$ that project to the
same reaction in $\mathcal R_W.$ The system of differential equations (\ref{eq:projj})
defines a kinetics $K_W$ for  $\pi_W(\mathcal N),$ where, for any
reaction $Q\to Q'\in {\cal R}_W,$ $K_{W,Q\to Q'}$ is given by the sum from
the parentheses in (\ref{eq:projj}). 
We call the resulting reaction system $(\pi_W(\mathcal N), K_W)$
a {\em projection of $\mathcal N$ onto $W$.} 
Note that $\overline K$ is not unique. Which variables $c_i,$ $i\in
\complement W$
are written in terms of $t$ and which are written in terms
of $c_i,$ $i\in W$ is a matter of context. 
For instance, Example \ref{ex:projNet} below describes two different functions $\overline K$
associated with system (\ref{ex:MA}) projected onto $\{1,2\}.$

A natural way of defining $\overline {K}$ 
for $\kappa$-variable mass-action systems is to include $c_i,$
$i\in\complement W$ in the rate-constant function:
$\overline K(t,c)=\overline{\kappa}(t)\pi_W(c)^{\pi_W(P)},$ where 
$\overline{\kappa}(t)=\kappa(t)\pi_{\complement W}(c(t))^{\pi_{\complement
    W}(P)}.$ The differential equations (\ref{eq:projj}) in this case are

\begin{equation}\label{eq:projjj}
\frac{d}{dt}\pi_W(c) = \sum_{Q\to Q'\in\mathcal R_W}\Bigg(
\sum_{\substack{{\{P\to P'\in\mathcal
  R:}\\ {\pi_W(P)=Q, \pi_W(P')=Q'\}}}} \kappa_{P\to P'}(t)\pi_{\complement
  W}(c)^{\pi_{\complement W}(P)}\Bigg) \pi_{W}(c)^{Q} (Q'-Q).
\end{equation}
Note that this projection has the form of $\kappa$-variable mass-action,
with rate-constant function for the reaction $Q\to Q'\in \mathcal R_W$ given by the
second sum in (\ref{eq:projjj}).
However, this rate-constant is not necessarily bounded in a compact interval
of $(0,\infty).$

\begin{example}\label{ex:projNet}
Following (\ref{eq:projjj}), the projection of reaction system 
(\ref{ex:MA}) onto $W=\{1,2\}$
i.e., onto species $A$ and $B,$
can be written in the $\kappa$-variable mass-action form (without
necessarily being $\kappa$-variable mass-action):
$$B\mathop\rightleftharpoons^{\overline{\kappa}_1(t)}_{\overline{\kappa}_2(t)} A
\mathop{\to}^{\overline{\kappa}_3(t)} 
A+B\mathop\to^{\kappa_4(t)} 0\qquad
2A\mathop\to^{\kappa_5(t)} A\mathop\gets^{\overline{\kappa}_6(t)} 0$$ 
with differential equations 
\begin{eqnarray*}
\dot c_A &=&\overline{\kappa}_1(t)c_B-\overline{\kappa}_2(t)c_A-\kappa_4(t)c_Ac_B
-\kappa_5(t)c_A^2+\overline{\kappa}_6(t)\\\nonumber
\dot c_B
&=&-\overline{\kappa}_1(t)c_B+\overline{\kappa}_2(t)c_A+\overline{\kappa}_3(t)c_A-\kappa_4(t)c_Ac_B
\end{eqnarray*}
where (recall equation (\ref{eq:exMA}))
$\overline{\kappa}_1(t)=\kappa_1(t)c_D(t),$ 
$\overline{\kappa}_2(t)=\kappa_2(t)c_C(t),$ 
$\overline{\kappa}_3(t) =\kappa_3(t)c_C(t),$ and 
$\overline{\kappa}_6(t) =\kappa_6(t)c_D(t)^2.$

On the other hand, let $T=\{c(t)\mid t\ge 0\}$ be a trajectory of (\ref{ex:MA}) with
initial condition $(\alpha,\ \beta,\ \gamma,\ \eta)\in\mathbb R_{>0}^4.$
Then $c_A+c_D=\alpha+\eta,$ $c_B+c_C=\beta+\gamma$ and therefore
$\pi_{\{1,2\}}(T)$ is a trajectory of the following projection of (\ref{ex:MA}): 
$$B\mathop\rightleftharpoons^{\overline K_1}_{\overline K_2} A
\mathop{\to}^{\overline K_3} 
A+B\mathop\to^{\overline K_4} 0\qquad
2A\mathop\to^{\overline K_5} A\mathop\gets^{\overline K_6} 0,$$
where, denoting $\pi_{\{1,2\}}(c)=(x,y),$ the rate function $\overline K(t,(x,y))$ is given by  
$\overline K_1(t,(x,y))=\kappa_1(t)y(\alpha+\eta-x),$ 
$\overline K_2(t,(x,y))=\kappa_2(t)x(\beta+\gamma-y),$
$\overline K_3(t,(x,y))=\kappa_3(t)x(\beta+\gamma-y),$
$\overline K_4(t,(x,y))=\kappa_4(t)xy,$
$\overline K_5(t,(x,y))=\kappa_5(t)x^2$ and 
$\overline K_6(t,(x,y))=\kappa_6(t)(\alpha+\eta-x)^2.$
\end{example}

\begin{remark}\label{rem:projWR}
Let $W\subset \{1,\ldots, n\}$ and $P,P'\in\mathcal C.$ If $\pi_W(P)\neq \pi_W(P'),$
then any directed path in $\mathcal R$ from $P$ to $P'$ projects onto a directed 
path from $\pi_W(P)$ to $\pi_W(P')$ in $\mathcal R_W$. If, on the other
hand, $\pi_W(P)=\pi_W(P'),$ then a directed path from $P$ to $P'$ either projects onto 
a cycle in ${\cal R}_W$ or is eliminated by the projection.

Therefore projection preserves weak reversibility: if
$\mathcal N$ is weakly reversible, then so is 
$\pi(\mathcal N).$ This result appears in \cite{pantea:thesis} and is
also the object of Lemma 3.4. in \cite{anderson:oneLC}.
\end{remark} 
\end{subsection}

\begin{subsection}{Complex-balanced systems and deficiency of a
    network}\label{sec:comBal}
Complex-balanced systems are defined in the context of mass-action
kinetics, i.e., the rate-constants are fixed positive numbers. 
\begin{definition}\label{def:compBal}
An equilibrium $c_*\in \mathbb R_{\ge 0}^n$ of a mass-action system
$(\mathcal R, \mathcal S, \mathcal C, \kappa)$ is  called {\em complex-balanced
equilibrium} if, at $c_*$, for any complex $P_0\in\cal C,$ 
the flow into $P_0$ is equal to the flow out of $P_0.$
More precisely, for each $P_0\in{\mathcal C}$ we have
$$\sum_{P\to P_0}\kappa_{P\to P_0}{c_*}^P=\sum_{P_0\to P}\kappa_{P_0\to P}{c_*}^{P_0}.$$
A complex-balanced system is a mass-action system that admits a
strictly positive complex-balanced equilibrium. 
\end{definition}

\begin{definition}\label{def:deficiency}
Let $(\mathcal S,\mathcal C,\mathcal R)$ be a reaction network with $m$ complexes,
$l$ linkage classes and whose stoichiometric subspace has dimension
$s.$ The {\em deficiency} of the reaction network $\mathcal R$ is 
$m-l-s.$
\end{definition} 

The deficiency of a reaction network is always non-negative
\cite{feinberg:lectures}. 
It has been shown that weakly reversible systems whose deficiency is
equal to zero are complex-balanced \cite{feinberg:lectures}. This
remarkable fact reveals a large class of mass-action systems which are
complex-balanced regardless of the choice of their rate constants.
\end{subsection}

\begin{subsection}{Persistence and the sub-tangentiality
    condition}\label{sec:persdef} $\text{ }$ 

\begin{definition}\label{def:pers} A trajectory
$T(c_0)=\{(x_1(t),\ldots, x_n(t))\mid t\geq 0\}$ 
with positive initial condition $c_0\in\mathbb R_{>0}^n$ 
of an n-dimensional dynamical system is called {\em persistent} if 
$$\liminf_{t\to \infty} x_i(t)>0\ for\ all\ i\in\{1,\ldots, n\}.$$
\end{definition}

Some authors call a trajectory that satisfies the condition in
Definition \ref{def:pers} {\em strongly persistent} \cite{takeuchi}. In
their work, persistence requires only that  $\limsup_{t\to \infty}
x_i(t)>0$ for all $i\in\{1,\ldots, n\}.$ We say that {\em a dynamical
system (or a reaction system)
is persistent} if all its trajectories with strictly positive initial
condition are persistent. 

\begin{definition}\label{def:omega}
Let $T(c_0)=\{{\bf x}(t)\mid t\geq 0\}$ denote a forward trajectory of a
dynamical system with initial condition $c_0\in \mathbb R_{>0}^n.$ 
The {\em $\omega$-limit set of $T(c_0)$} is
$$\textstyle\lim_{\omega} T(c_0) = \{l\in \mathbb R^n\mid \lim_{n\to\infty}c(t_n) = l \text{ for some sequence } t_n\to\infty\}.$$
The elements of $\lim_{\omega} T(c_0)$ are called $\omega$-limit points of $T(c_0)$.
\end{definition}

Note that a bounded trajectory of a dynamical system with positive initial condition  
is persistent iff it has no $\omega$-limit points on 
$\partial\mathbb R_{\geq 0}^n.$ 

In this paper we will prove persistence of trajectories $T(c_0)$ for
reaction systems with special properties. Our approach will consist of showing that a
certain convex polyhedron included in $\mathbb R^n_{> 0}$
contains $T(c_0).$ To this end we will use the following version of a result of Nagumo
\cite{blanchini}. Recall
that for a closed, convex set $K\subset \mathbb R^n$ and for ${\bf x}\in
K$ the {\em normal cone of $K$ at $\bf x$} is defined as follows:
$$N_K({\bf x})=\{{\bf n}\in\mathbb R^n\mid {\bf n}\cdot({\bf y}-{\bf
  x})\le 0\text{ for all }{\bf y}\in K\}.$$

\begin{thm}[Nagumo, \cite{blanchini}]\label{thm:nagumo}
Let $K\subset \mathbb R^n$ be a closed, convex set. Assume that
the system $\dot {\bf x}(t)=f(t,{\bf x}(t))$ has unique solution for any initial value, and let  
$T(c_0)=\{{\bf x}(t)\mid t\ge 0,\ x(0)=c_0\}$ be a forward trajectory of this
system with $c_0\in K.$ If for any $t_0\ge 0$ such that $x(t_0)\in
\partial K$ we have the {\em sub-tangentiality condition}
$${\bf n}\cdot f(t_0,x(t_0))\le 0\text{ for all } {\bf n}\in N_{K}(
x(t_0))$$
then $T(c_0)\subset K.$
\end{thm}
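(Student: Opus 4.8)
The plan is to run the standard ``distance-to-$K$'' argument, turning the geometric sub-tangentiality hypothesis into a differential inequality that forces the trajectory to remain in $K$ by a Gronwall-type estimate. Write $d_K({\bf x})=\min_{{\bf y}\in K}\|{\bf x}-{\bf y}\|$ for the distance to $K$ and let $P_K({\bf x})$ be the (unique, since $K$ is closed and convex) nearest point of $K$ to ${\bf x}$. I would first record the two standard facts about projections onto a closed convex set that do all the work: (i) the squared distance $\tfrac12 d_K^2$ is continuously differentiable with gradient ${\bf x}-P_K({\bf x})$; and (ii) whenever ${\bf x}\notin K$ the vector ${\bf x}-P_K({\bf x})$ is an outward normal to $K$ at the boundary point $P_K({\bf x})$, i.e. ${\bf x}-P_K({\bf x})\in N_K(P_K({\bf x}))$. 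Fact (ii) is immediate from the variational characterization of the projection, which reads $({\bf x}-P_K({\bf x}))\cdot({\bf y}-P_K({\bf x}))\le 0$ for all ${\bf y}\in K$, and this is precisely the defining inequality of $N_K$ given above.

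Next I would set $g(t)=d_K(x(t))$ and study it on the open set of times where $x(t)\notin K$. There $g=\sqrt{2\cdot\tfrac12 d_K^2}$ is $C^1$ (composition of the $C^1$ squared-distance with $x(\cdot)$, away from the zero set of the distance), so by the chain rule and fact (i),
\[
\dot g(t) = {\bf n}(t)\cdot f(t,x(t)), \qquad {\bf n}(t)=\frac{x(t)-P_K(x(t))}{g(t)},
\]
where ${\bf n}(t)$ is a unit outward normal at the boundary point $p(t):=P_K(x(t))\in\partial K$. I then split
\[
{\bf n}(t)\cdot f(t,x(t)) = {\bf n}(t)\cdot f\big(t,p(t)\big) + {\bf n}(t)\cdot\big(f(t,x(t))-f(t,p(t))\big).
\]
By fact (ii), ${\bf n}(t)\in N_K(p(t))$, so the sub-tangentiality condition applied at the boundary point $p(t)$ makes the first term $\le 0$. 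For the second term I invoke local Lipschitz continuity of $f$ in its spatial argument (automatic for the polynomial mass-action fields of this paper), which bounds it by $L\|x(t)-p(t)\|=L\,g(t)$. Hence $\dot g(t)\le L\,g(t)$ wherever $x(t)\notin K$, while $g(t)=0$ whenever $x(t)\in K$.

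Finally I would conclude by Gronwall. Since $c_0\in K$ we have $g(0)=0$, and $g$ is continuous and nonnegative. If $g(t_1)>0$ for some $t_1$, set $t_0=\sup\{t<t_1: g(t)=0\}$; then $g(t_0)=0$, $g>0$ on $(t_0,t_1]$, and integrating the inequality $\dot g\le L g$ on that interval gives $g(t)\le g(t_0)e^{L(t-t_0)}=0$, a contradiction. Therefore $g\equiv0$, i.e. $T(c_0)\subset K$.

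I expect the main obstacle to be making sure the sub-tangentiality hypothesis is available exactly where the estimate uses it. In the computation it is applied at the projection points $p(t)\in\partial K$, which in general are \emph{not} points of the trajectory, whereas the theorem posits the condition only at boundary points $x(t_0)$ that the trajectory actually visits. The clean resolution, and the one relevant to every application in this paper, is that sub-tangentiality holds at \emph{all} boundary points of $K$, so the inequality at $p(t)$ is legitimate; alternatively one can work at a first exit time $\tau$ and transfer the inequality from the nearby projection points to the genuine trajectory point $x(\tau)$ using $p(t)\to x(\tau)$ as $t\to\tau^+$ together with the outer semicontinuity of the normal cone $N_K$. A secondary, purely technical point is the non-smoothness of $g=d_K$ on $\partial K$; this is handled by restricting the derivative computation to the open set $\{t: x(t)\notin K\}$ (where $g$ is $C^1$) and using continuity of $g$ at the endpoints $t_0,t_1$, as above, so no Dini-derivative machinery is strictly needed.
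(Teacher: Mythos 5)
The paper offers no proof of this statement: it is imported verbatim from Blanchini's survey \cite{blanchini} and used as a black box, so there is nothing internal to compare your argument against. Your distance-function proof is the standard one and it is sound under the hypotheses you actually invoke: differentiate $g(t)=d_K(x(t))$ on the open set where $x(t)\notin K$, split ${\bf n}(t)\cdot f(t,x(t))$ at the projection point $p(t)=P_K(x(t))$, kill the first term by sub-tangentiality at $p(t)$ (legitimate since ${\bf n}(t)\in N_K(p(t))$), and absorb the remainder by a Lipschitz--Gronwall estimate. Two caveats, both of which you at least partly flag. First, the Gronwall step needs $f$ locally Lipschitz in ${\bf x}$, uniformly for $t$ in compacts, which is strictly stronger than the stated ``uniqueness of solutions''; this is harmless here because every application in the paper has $f$ polynomial in ${\bf x}$ with bounded, piecewise-differentiable coefficients, but it means your proof establishes a slightly less general theorem than Nagumo's. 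Second, your primary resolution of the ``where is sub-tangentiality assumed'' issue is the correct one --- in Theorem \ref{thm:proj} and in the proof of Theorem \ref{thm:pers} the inequality is verified at \emph{every} boundary point of the relevant convex set, not merely at points the trajectory happens to visit --- but your fallback (pass to the first exit time $\tau$ and transfer the inequality to $x(\tau)$ via $p(t)\to x(\tau)$ and outer semicontinuity of $N_K$) cannot close the gap. The theorem is actually false if the condition is imposed only along the trajectory: take $n=1$, $K=(-\infty,0]$, $f(t,x)=t$, $c_0=0$; solutions are unique, the trajectory $x(t)=t^2/2$ meets $\partial K=\{0\}$ only at $t_0=0$, where ${\bf n}\cdot f(0,0)=0\le 0$ for all ${\bf n}\in N_K(0)=[0,\infty)$, yet the trajectory leaves $K$ immediately. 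So the hypothesis must be read, as in \cite{blanchini} and as it is verified everywhere in this paper, as holding at all boundary points for all times; with that reading your proof is complete.
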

\end{subsection}

\end{section}

\begin{section}{Lower-endotactic networks}\label{sec:endo} 
Let $\mathcal N = (\mathcal S, \mathcal C, \mathcal R)$ be a reaction network
with species $X_1,\ldots, X_n$ and let $S\subseteq \mathbb R^n$ denote
its stoichiometric subspace. By a useful abuse of notation, we view
the source complexes of $\mathcal N$ as lattice points in  $\mathbb Z^n:$
$${\mathcal{SC}}({\mathcal N}) = \{(m_1,\ldots, m_n)\in
\mathbb{Z}_{\geq0}^n\mid m_1X_1+\ldots + m_nX_n\in\mathcal C\text{ is a
  source complex}\}.$$

In this section we revisit the notion of {\em lower-endotactic
  network}, first introduced in \cite{craciun_nazarov_pantea}
 for the case of two-species networks, and we extend it to {\em planar
   reaction networks}, defined below. 
We let $\mathrm{aff}(\mathcal N)$ denote the affine hull of $\mathcal C$, i.e. the
minimal affine subspace of $\mathbb R^n$ that contains $\mathcal C.$ 
\begin{definition}\label{def:planar}
The reaction network $\mathcal N$ is called
planar if $\mathrm{dim(aff}(\mathcal N))\le 2.$
\end{definition}

Let $\mathrm{aff}_+(\mathcal N)=\mathrm{aff}(\mathcal N)\cap \mathbb R^n_{\ge 0}.$
The following definition is similar to the one in \cite[section 4]{craciun_nazarov_pantea}. 

\begin{definition}\label{def:esupp}
 Let $\mathcal N=(\mathcal S,\mathcal C, \mathcal R)$ be a reaction network such that $\mathrm{aff}(\mathcal
 N)$ has dimension two and let $\bf v$ be a vector in S.

(i) The {\em $\bf v$-essential subnetwork} 
$\mathcal N_{\bf v}=(\mathcal S,\mathcal C_{\bf v}, \mathcal R_{\bf v})$ of 
$(\mathcal S, \mathcal C, \mathcal R)$ is defined by the reactions of
$\mathcal R$ whose reaction vectors are not orthogonal to $\bf v$: 
$$\mathcal R_{\bf v} = \{P\to P'\in{\cal R}\mid  (P'-P)\cdot {\bf v} \ne 0\};$$
$\mathcal C_{\bf v}$ is defined as the set of complexes appearing in
reactions of  $\mathcal R_{\bf v}.$

(ii) The {\em $\bf v$-essential support} of 
$\mathcal N$ is the supporting line $L$ of $\mathrm{conv}(\mathcal{SC(N}_{\bf v}))$
that is orthogonal to $\bf v$ and
 such that the positive direction of
$\bf v$ lies on the same side of $L$ as $\mathcal{SC(N}_{\bf v});$
(in other words, for any $P\in \mathrm{aff}(\mathcal
 N),$ the intersection of the half-line $\{P+t{\bf v} |, t\ge 0\}$ with the
half-plane bounded by $L$ that contains $\mathcal{SC(N}_{\bf v})$ 
is unbounded.)
 The line $L$ is denoted by $esupp^{\bf v}(\mathcal N).$
\end{definition}

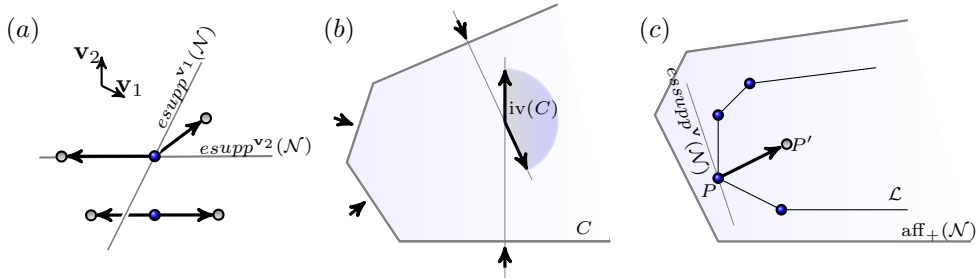
\begin{figure}
\begin{tikzpicture}[thick, >=stealth', scale = .35] 
\node[] at (-5, 7) {$(a)$};
\node[] at (7, 7) {$(b)$};
\node[] at (19, 7) {$(c)$};
\begin{scope}[shift={(0,0)}]
\begin{scope}[rotate={(-26.5)}]
\draw[->,thick]  (-4,3.5)--(-3,3.5);
\draw[->,thick]  (-4,3.5)--(-4.5,4.5);
\node at  (-3,3.9) {${\bf v}_1$};
\node at  (-5,4.3) {${\bf v}_2$};
\draw[gray] (-4.9,0) -- (3,4); 
\draw[<->, very thick] (-2,-1) -- (2,1); 
\draw[->, very thick] (-1,2) -- (0,4);
\draw[->, very thick] (-1,2) -- (-4,.5); 
\draw[draw = white, double=gray] (-1,-2) -- (-1,6); 
\shadedraw [shading=ball] (0,0) circle (.19cm);
\shadedraw [shading=ball] (-1,2) circle (.19cm); 
\shadedraw [] (2,1)+(.16,.09) circle (.19cm);
\shadedraw [] (-2,-1)+(-.16,-.09) circle (.19cm);
\shadedraw [] (-4,.5)+(-.16,-.09) circle (.19cm);
\shadedraw [] (0,4)+(.1,.16) circle (.19cm);
\scriptsize
\node[rotate=1.5] at (2.3,4) {$esupp^{{\bf v}_2}(\mathcal N)$};
\node[rotate=63.5] at (-1.3,5.2) {$esupp^{{\bf v}_1}(\mathcal N)$};
\end{scope}
\end{scope}
\begin{scope}[shift={(7.3,-1)}]
\begin{scope}[draw = gray]
\draw[clip] (10,0)--(2,0)--(0,3)--(1,6)--(8,9);
\shadedraw[left color = blue!6!white,right color=white,draw=gray] 
	(0,0) rectangle +(10,10); 
\end{scope}
\draw[gray] (10,0)--(2,0)--(0,3)--(1,6)--(8,9);

\begin{scope}[shift={(6,-1)}]
\begin{scope}[rotate = 90] 
\draw[gray, very thin] (-.4,0)--(8,0);
\draw[->, very thick] (0,0)--(.8,0);
\end{scope}
\end{scope}

\begin{scope}[shift={(.1,1)}]
\begin{scope}[rotate = 40]
\draw[->, very thick] (0,0)--(.8,0);
\end{scope}
\end{scope}

\begin{scope}[shift={(-.5,4.7)}]
\begin{scope}[rotate = -20]
\draw[->, very thick] (0,0)--(.8,0);
\end{scope}
\end{scope}

\begin{scope}[shift={(4.2,8.4)}]
\begin{scope}[rotate = -65]
\draw[gray, very thin] (-.4,0)--(6.7,0);
\draw[->, very thick] (0,0)--(.8,0);
\end{scope}
\end{scope}

\begin{scope}[shift={(6,4.53)}]
\shadedraw[left color=gray!30!white, right color=blue!20!white,
draw=blue!15!white] 
(0,0) -- (0,2) arc (90:-65:2) -- cycle; 
\draw[->, very thick] (0,0)--(0,2);
\begin{scope}[rotate=-65]
\draw[->, very thick] (0,0)--(2,0);
\end{scope}
\end{scope}

\scriptsize
\node[] at (9,.5) {$C$};
\node[] at (7.1,5) {$\mathrm{iv}(C)$};
\end{scope}
\begin{scope}[shift={(19,-1)}, scale=1.2]
\begin{scope}[draw = gray]
\draw[clip] (10, 0)--(2,0)--(0,4)--(1,6)--(8,7);
\shadedraw[left color = blue!6!white,right color=white,draw=white] 
	(0,0) rectangle +(10,8); 
\end{scope}
\draw[gray] (10, 0)--(2,0)--(0,4)--(1,6)--(8,7);
\scriptsize
\node at  (9,.3) {$\mathrm{aff}_+(\mathcal N)$};
\node at  (7.6,1.5) {$\mathcal L$};
\node at  (1.7,1.6) {$P$};
\node at  (4.65,3.1) {$P'$};
\node[rotate = -74] at  (1,3.8) {$essupp^{\bf v}(\mathcal N)$};
\draw[->, very thick] (2,2) -- (4,3); 
\draw[thin] (8,1)--(4,1)--(2,2)--(2,4)--(3,5)--(7,5.5);
\draw[very thin, gray] (2.5,.5)--(1,5);
\shadedraw [shading=ball] (4,1) circle (.15cm);
\shadedraw [shading=ball] (2,2) circle (.15cm);
\shadedraw [shading=ball] (2,4) circle (.15cm);
\shadedraw [shading=ball] (3,5) circle (.15cm);
\shadedraw [] (4.17,3.08) circle (.15cm);
\end{scope}
\end{tikzpicture} 
\caption{(a) Essential supports corresponding to ${\bf v}_1$ and ${\bf
  v}_2;$ (b) Example of a set of inward vectors; (c) Illustration for
  the proof of Lemma \ref{lem:sweepTest}.}\label{fig:esupp}
\end{figure}

Figure \ref{fig:esupp}(a) illustrates the notion of $\bf v$-essential
support for a planar reaction network with six complexes and four
reactions. This reaction network has two source complexes and note
that ${\cal N}_{{\bf v}_1}$ is equal to $\cal N,$  whereas ${\cal
  N}_{{\bf v}_2}$ is strictly smaller than $\cal N$ and contains
only one source complex.

We denote by $esupp^{\bf v}(\mathcal N)_{<0}$ the intersection
of $\mathrm{aff}_+(\mathcal N)$ with the
open half-plane in $\mathrm{aff}(\mathcal N)$ bounded by $esupp^{\bf v}(\mathcal N)$ 
that does not contain the positive direction of $\bf v:$ 
$$esupp^{\bf v}(\mathcal N)_{<0} = \{P\in \mathrm{aff}_+(\mathcal N)\mid (P - Q) \cdot {\bf
  v}< 0 \text{ for all } Q\in esupp^{\bf v}(\mathcal N)\}$$
and we define $esupp^{\bf v}(\mathcal N)_{>0}$ similarly.

\begin{definition}\label{def:innerSet}
Let $C\subset \mathbb R^n$ be a closed and convex set, and let
$S$ be the linear subspace of $\mathbb R^n$ such that  
the affine hull of $C$ is a translation of $S$. 
Then
$$\mathrm{iv}(C)=-\bigcup_{{\bf x}\in \partial C} (N_{C}({\bf x})\cap S)$$ 
is called the set of {\em inward vectors} of $C.$ Here $\partial C$
denotes the relative boundary of $C.$
\end{definition}

An example of a set of inward vectors for a two-dimensional set $C$ is
depicted in Figure \ref{fig:esupp}(b).

\begin{remark}\label{rem:ivC}
(i) $\mathrm{iv}(C)$ is a convex cone and if $C$ is bounded 
then $\mathrm{iv}(C)=S.$ 

(ii) If $C$ is a half-line then $\mathrm{iv}(C)$ consists of 
all vectors parallel with $C$ pointing in the unbounded 
direction of $C.$ If $C$ is a bounded  line segment, then  $\mathrm{iv}(C)$
consists of all vectors parallel with $C.$

(iii) If $\mathrm{aff}(C)$ has dimension two, then the set of inward vectors 
$\mathrm{iv}(C)$ is two-dimensional and consists of the 
normal vectors ${\bf v}\in S$ of all supporting lines $L$ of $C$ 
such that the positive direction of $\bf v$ is on the same 
side of $L$ as $C$ (see Figure \ref{fig:esupp}(b) for an example). 
\end{remark}

\begin{definition}\label{def:endo}
Let $\mathcal N$ be a planar reaction network with stoichiometric subspace $S.$ 
Then $\mathcal N$ is called {\em lower-endotactic} if the set 
\begin{equation}\label{eq:sweep}
\{P\to P'\mid P\in esupp^{\bf v}(\mathcal N) \text{ and } P'\in
esupp^{\bf v}(\mathcal N)_{<0}\}
\end{equation}
is empty for all nonzero vectors ${\bf v}\in \mathrm{iv}(\mathrm{aff}_+(\mathcal N)).$
\end{definition} 

Definition \ref{def:endo}(ii) is easily explained by the ``parallel
sweep test'' \cite{craciun_nazarov_pantea}. 
A reaction network $\mathcal N=(\mathcal S, \mathcal C, \mathcal R)$
is lower-endotactic if and only if it
passes the following test for any nonzero inward vector $\bf v$ of
$\mathrm{aff}_+(\mathcal N)$: sweep the plane $\mathrm{aff}(\mathcal N)$ with a line $L$ orthogonal to
$\bf v$, coming from infinity and going in the direction of $\bf v$, and stop when 
$L$ encounters a source complex corresponding to a reaction which is
not parallel to $L$. Now check  that no reaction with source on $L$
points towards the swept region. If $\mathcal R_{\bf v} = \O$, then all reaction vectors of 
$\mathcal R$ are perpendicular to $\bf v$ and $L$ never stops in the
parallel sweep test. In this case we still say that the network has passed the test for
$\bf v$. 

A reaction network is lower-endotactic if its reactions with sources
that are ``closest'' to the boundary of $\mathrm{aff}_+(\mathcal N)$
point ``inside'' $\mathrm{aff}_+(\mathcal N)$. 
Note that this special property of lower-endotactic networks in the
lattice space $\mathbb Z^n_{\ge 0}$ is analogous with the behavior of persistent trajectories in the phase space $\mathbb R^n_{\ge 0}:$ once a persistent
trajectory gets  ``close enough'' to the relative boundary of  its
stoichiometric compatibility class $S(c_0)$, it is pushed back
``inside''. In this sense, the requirement 
that a reaction network be lower-endotactic appears very naturally in
the context of persistence of a corresponding reaction system. 

\begin{remark}\label{rem:endo}
Following \cite{craciun_nazarov_pantea}, a planar reaction network 
$\mathcal N$ is called {\em endotactic} if the parallel sweep test
holds for all nonzero vectors ${\bf v}\in S$. An
endotactic network is also lower-endotactic; the two notions coincide
if $\mathrm{aff}_+(\mathcal N)$ is bounded. 
\end{remark}

\begin{remark}\label{rem:le_hyperplane}
The definition of endotactic networks has been extended in
\cite{craciun_nazarov_pantea} for networks that are not necesarilly planar,
using the parallel sweep test with hyperplanes instead of lines
(\cite[Remark 4.1]{craciun_nazarov_pantea}). 
Definition \ref{def:endo} is in fact a special case of the following more 
general definition of lower-endotactic networks:

\begin{definition}
A reaction network (not necessarily planar) with $n$ species is called
lower-endotactic if it passes the parallel sweep test for any inward
vector of the non-negative orthant $\mathbb R_{\ge 0}^n.$ 
\end{definition}  

Whereas the definition above is easier to state, the more technical
Definition \ref{def:endo} is better suited for planar networks in the
context of this paper.
\end{remark}

\begin{remark}\label{rem:wrEndo}
A weakly reversible reaction network $\mathcal N$ is always endotactic, and in
particular, lower-endotactic. Indeed, if $P\in esupp^{\bf v}(\mathcal N)$
for some vector ${\bf v}\in S$ and $P\to P'$ is a reaction of $\mathcal N$ then $P'\in
esupp^{\bf v}(\mathcal N)_{\ge 0},$ for otherwise the fact that $P'$ is
also a source complex would contradict $P\in esupp^{\bf v}(\mathcal N).$ 
\end{remark}

\begin{remark}\label{rem:2D}
If $\mathrm{aff}(\mathcal N)$ is one-dimensional we let $\mathcal P$ be a
two-dimensional affine subspace of $\mathbb R^n$ such that $\mathrm{aff}(\mathcal
N)\subset \mathcal P.$ The parallel sweep test for $\mathcal N$
with vectors of $\mathrm{iv}(\mathcal P\cap \mathbb R^n_{\ge 0})$ 
provides the same result as the ``true'' parallel sweep test
with vectors of $\mathrm{iv}(\mathrm{aff}_+(\mathcal N)).$ 
We may pretend that $\mathrm{aff}_+(\mathcal
N)$ coincides with the two-dimensional set $\cal P$ and therefore,
from this point of view, 
lower-endotactic planar reaction networks with one-dimensional
stoichiometric subspace do not need a special discussion. In what
follows, unless stated otherwise, we will assume that
$\mathrm{dim}(\mathrm{aff}(\mathcal N)) = 2.$ 
\end{remark}

Note that, if $\mathrm{aff}(\mathcal N)$ has dimension one, then
$\mathrm{aff}_+({\mathcal N})$ contains vectors with at most two possible 
positive directions (see Remark \ref{rem:ivC}).
The following lemma shows that, even if
$\mathrm{aff}(\mathcal N)$ has dimension two, the parallel sweep test only 
needs to be performed for a finite set of directions $\bf v$ 
(see also \cite[Proposition 4.1]{craciun_nazarov_pantea}):
\begin{lemma}\label{lem:sweepTest}
Let $\mathcal N$ be a planar reaction network with 
$\mathrm{dim(aff}(\mathcal N))=2.$ 

(i) If $\mathrm{aff}_+(\mathcal N)$ is bounded, then  
$\mathcal N$ is lower-endotactic if and only if it passes the parallel 
sweep test for vectors $\bf v$ that are orthogonal to a side of 
the polygon $\mathrm{conv}(\mathcal{SC(N)}).$ 

(ii) If $\mathrm{aff}_+(\mathcal N)$ is unbounded, then  
$\mathcal N$ is lower-endotactic if and only if it passes the parallel 
sweep test for vectors $\bf v$ that are either orthogonal to a side of 
the polygon $\mathrm{conv}(\mathcal{SC(N)}),$ or are generators 
of the cone  $\mathrm{iv}(\mathrm{aff}_+(\mathcal N))$.
\end{lemma}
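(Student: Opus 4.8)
The plan is to prove the nontrivial implication (``only if'') by contraposition, since the forward implication is immediate: each listed direction is a nonzero inward vector (edge‑normals that lie in the closed cone $\overline{\mathrm{iv}(\mathrm{aff}_+(\mathcal N))}$, together with the two generators of that cone), so if $\mathcal N$ is lower‑endotactic it passes the sweep test there by definition. For the converse, suppose the sweep test fails at some nonzero $\mathbf v_0\in\mathrm{iv}(\mathrm{aff}_+(\mathcal N))$, witnessed by a reaction $P\to P'$ with $P\in esupp^{\mathbf v_0}(\mathcal N)$ and $(P'-P)\cdot\mathbf v_0<0$; the goal is to manufacture a failure at one of the finitely many critical directions. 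I would work with the full source polygon $\Pi=\mathrm{conv}(\mathcal{SC}(\mathcal N))$ and its normal fan, and I may assume $\mathbf v_0$ lies in the relative interior of the cone $\mathrm{iv}(\mathrm{aff}_+(\mathcal N))$, for otherwise $\mathbf v_0$ already lies on a boundary ray, i.e.\ is a generator, and we are done.

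The engine is an elementary convexity fact: if $N=\{a\mathbf v_1+b\mathbf v_2\mid a,b\ge 0\}$ is a two‑dimensional polyhedral cone and a linear functional $\ell$ satisfies $\ell(\mathbf w)<0$ for some $\mathbf w\in N$, then $\ell(\mathbf v_1)<0$ or $\ell(\mathbf v_2)<0$, and in fact $\ell<0$ along the entire segment joining two directions at which it is negative. The key point is to apply this at the vertex of $\Pi$ that is the \emph{global} minimizer of $Q\mapsto Q\cdot\mathbf v_0$ over all sources, rather than at the source $P$ that happens to sit on the essential support line. Let $F$ be the face of $\Pi$ minimizing $\mathbf v_0$. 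If $F$ is an edge, then $\mathbf v_0$ is orthogonal to a side of $\Pi$, hence already critical, and it fails — done. If $F=\{Q_0\}$ is a single vertex, I would rotate $\mathbf v$ inside the normal cone $N_{Q_0}$ of $Q_0$ in $\Pi$ (so that $Q_0$ stays the global, hence essential, minimizer) toward the extreme ray on which the relevant backward reaction remains backward; the extreme rays of $N_{Q_0}$ are exactly the normals to the two sides of $\Pi$ incident to $Q_0$. In case (i), where $\mathrm{iv}(\mathrm{aff}_+(\mathcal N))=S$, this reaches an edge‑normal and produces the failure; in case (ii), if the segment leaves the cone first, I stop at the point where it crosses $\partial\,\mathrm{iv}(\mathrm{aff}_+(\mathcal N))$, which is a positive multiple of a generator, and the functional being negative along the whole segment guarantees the failure persists there. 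This is precisely why generators must be added to the list in the unbounded case.

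The main obstacle — and the reason $Q_0$ rather than $P$ is the right pivot — is that the essential subnetwork $\mathcal N_{\mathbf v}$, and hence the essential support $esupp^{\mathbf v}(\mathcal N)$, varies with $\mathbf v$: a source can be ``hidden'' strictly below the support line because all of its reactions happen to be orthogonal to $\mathbf v$ (parallel to the sweep line), so the witnessing source $P$ need not be a vertex, or even a boundary point, of $\Pi$. This breaks any naive attempt to rotate within a normal cone of $P$, because an arbitrarily small rotation off $\mathbf v_0$ turns those hidden sources essential and destroys the failure. The resolution I would carry out is exactly the vertex case above: when $F=\{Q_0\}$ and $Q_0\cdot\mathbf v_0$ is strictly below the essential minimum, $Q_0$ must be inessential at $\mathbf v_0$, so every reaction out of $Q_0$ is orthogonal to $\mathbf v_0$, i.e.\ parallel to the sweep line. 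Rotating $\mathbf v$ off $\mathbf v_0$ within $N_{Q_0}$ then makes one such reaction point strictly backward while simultaneously rendering $Q_0$ essential and keeping it the global minimizer; applying the convexity fact to this reaction's reaction vector drives the failure to an edge‑normal of $\Pi$ (or to a generator of the cone, on exit), completing the contrapositive. I expect the bulk of the care to go into verifying that $Q_0$ retains the status of essential minimizer all along the chosen segment, and into the degenerate configurations where $\Pi$ is a segment or a point, which should be dispatched directly.
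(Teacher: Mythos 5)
Your proposal is correct and rests on the same geometric engine as the paper's proof: the normal fan of $\mathrm{conv}(\mathcal{SC}(\mathcal N))$, restricted to $\mathrm{iv}(\mathrm{aff}_+(\mathcal N))$, is subdivided by the listed critical directions, and the sign of a linear functional on a two-dimensional cone is determined by its signs on the two generating rays. The paper runs this directly — for a non-critical $\bf v$ the support line meets a single vertex $P$ of the polygonal line $\mathcal L$, and passing the test at the two adjacent critical directions confines every product $P'$ to the angle at $P$ — whereas you run it contrapositively, propagating a failure at ${\bf v}_0$ through the normal cone of the globally minimizing vertex $Q_0$ until you hit an edge-normal or exit $\mathrm{iv}(\mathrm{aff}_+(\mathcal N))$ at a generator. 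The one substantive addition in your write-up is the explicit treatment of the case where $Q_0$ is inessential at ${\bf v}_0$ (every reaction out of $Q_0$ orthogonal to ${\bf v}_0$), so that $esupp^{{\bf v}_0}(\mathcal N)$ need not pass through any vertex of the source polygon; the paper's proof asserts without comment that $esupp^{\bf v}(\mathcal N)$ contains a vertex of $\mathcal L$, which silently skips exactly this configuration. Your perturbation step (rotate within $N_{Q_0}\cap \mathrm{iv}(\mathrm{aff}_+(\mathcal N))$ to make one of $Q_0$'s reactions strictly backward, then push along the segment to a critical direction, where $Q_0$ becomes essential and remains the minimizer) closes that gap cleanly. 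The only loose ends are the degenerate shapes of $\mathrm{conv}(\mathcal{SC}(\mathcal N))$ (a segment or a point), which you flag for separate treatment and which the paper's proof likewise does not address.
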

\begin{proof}
The sides of $\mathrm{conv}(\mathcal {SC(N)})$ whose inward normal vectors are
in $\mathrm{iv}(\mathrm{aff}_+(\mathcal N))$ form a polygonal line
$\mathcal L$. As in Figure \ref{fig:esupp}(c), if $\mathrm{aff}_+(\mathcal N)$ is
unbounded, we augment $\mathcal L$ with
half-lines of directions given by the generators of $\mathrm{iv}(\mathrm{aff}_+(\mathcal N)).$ 
If a vector ${\bf v}\in \mathrm{iv}(\mathrm{aff}_+(\mathcal N))$ does not correspond to
$(i)$ or $(ii)$ in the statement of the lemma, then $esupp^{\bf v}(\mathcal N)$
contains exactly one vertex $P$ of $\mathcal L.$ Let $P\to P'\in\mathcal
R.$ Since the inward normal vectors of the two sides of $\mathcal L$
adjacent to $P$ belong to cases $(i)$ or $(ii)$ from the statement of
the lemma, it follows that $P'$ lies in the interior or on the sides  of the angle
$\angle P$ of $\mathcal L,$ and therefore in $P\in esupp^{\bf v}(\mathcal N)_{>0}.$ 
In conclusion, the parallel sweep test holds for all ${\bf v}\in \mathrm{iv}(\mathrm{aff}_+(\mathcal N))$ and
$\mathcal N$ is lower-endotactic.
\end{proof}

\begin{example} A few examples are illustrated in Figure
\ref{fig:examples}. The source complexes are depicted using solid
dots and the various lines represent the final
positions of the sweeping lines from Lemma \ref{lem:sweepTest}.
Note that the reaction networks in $(a)$ and $(b)$ look the same, 
but, since $\mathrm{aff}_+(\mathcal N)$ is unbounded in $(a)$ and
bounded in $(b),$ the reaction network in $(a)$ is lower-endotactic,
whereas the reaction network in $(b)$ is not. The same thing happens
for $(e)$ and $(d).$ 
\end{example}

\begin{figure}
\includegraphics[scale=.205]{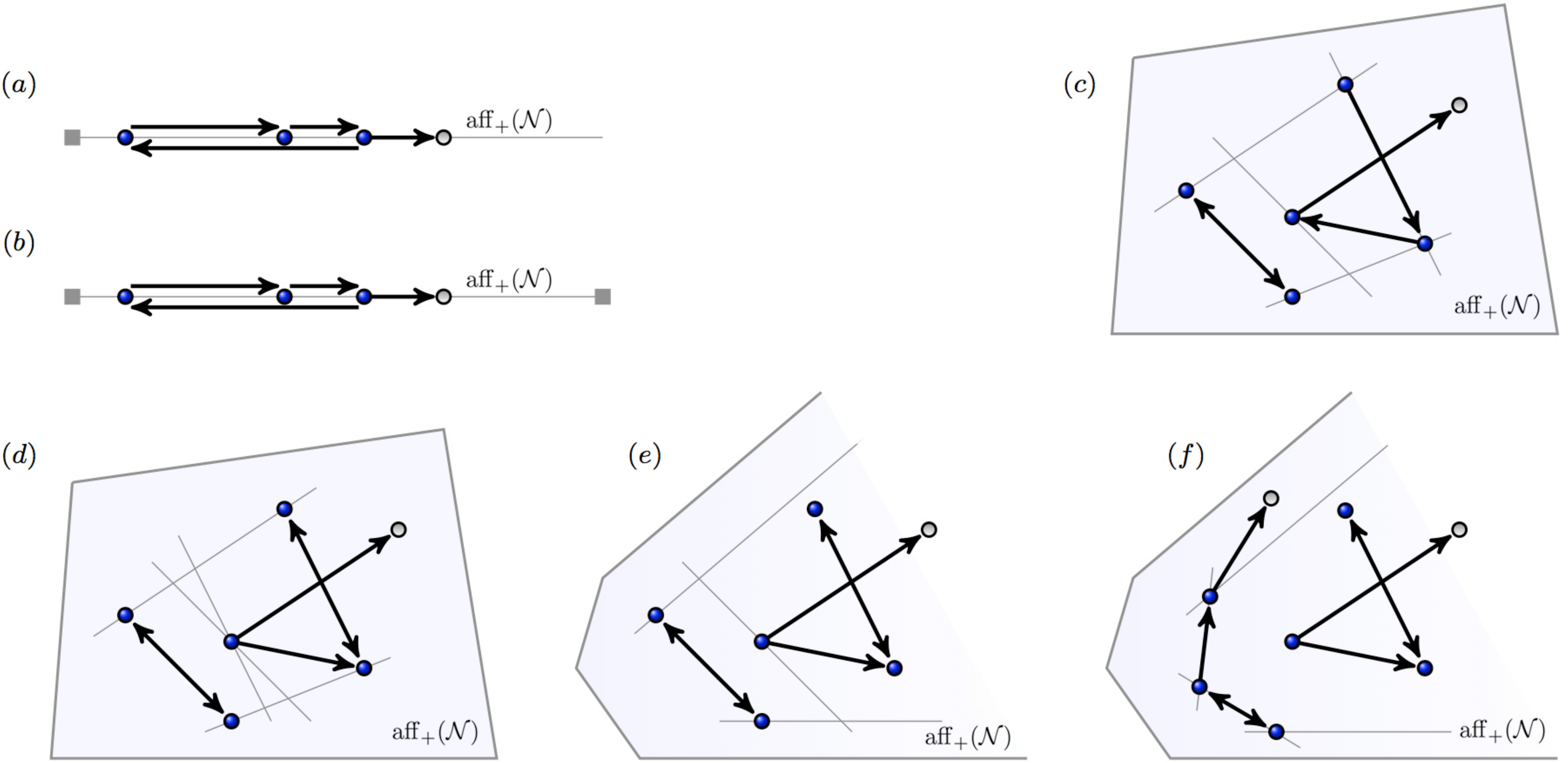}
\caption{Examples of lower-endotactic networks - (a), (c), (e) and non
  lower-endotactic 
networks - (b), (d), (f).}\label{fig:examples}
\end{figure}

\noindent {\bf Affine transformations of reaction networks.} An
important observation that is used often throughout this paper
is that projections of lower-endotatic networks are also lower-endotactic.   
We prove this fact in the larger context of {\em affine
  transformations of reaction networks}.
Let $\mathcal N=(\mathcal S, \mathcal C, \mathcal R)$ be a planar reaction network and consider an affine
transformation $U:\mathrm{aff}(\mathcal N)\to \mathbb R^d$ 
such that  $U(\mathrm{aff}_+(\mathcal N))\subset
\mathbb R^d_{\ge 0}.$ Similarly to the definition of a projected network,
we consider the ``generalized'' reaction network $U(\mathcal N),$ with reactions 
$U(\mathcal R)=\{U(P)\to U(P')\mid P\to P'\in\mathcal R\}$
and complexes in the set
$U(\mathcal C)$ which are allowed to
have nonnegative real coordinates. We have 
$\mathrm{aff}_+(U(\mathcal N)) = \mathrm{aff}(U(\mathcal N))\cap \mathbb
R^d_{\ge 0}$ and we may ask whether $U(\mathcal N)$ is lower-endotactic. 

\begin{prop}\label{prop:transform}
Let $\mathcal N$ be a planar reaction network and let  
$U:\mathrm{aff}(\mathcal N)\to\mathbb R^d$ be an affine
transformation such that
$U(\mathrm{aff}_+(\mathcal N))\subset \mathbb R^d_{\ge 0}.$
Then, if $\mathcal N$ is lower-endotactic, 
the planar reaction network $U(\mathcal N)$ is 
also lower-endotactic. Moreover,
if $\mathcal N$ is endotactic, then $U(\mathcal N)$ is also endotactic.
\end{prop}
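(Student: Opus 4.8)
The plan is to transport each sweep direction for $U(\mathcal N)$ back to a sweep direction for $\mathcal N$ through the adjoint of the linear part of $U$, and to check that the parallel sweep test is preserved under this transport. Write $U(\mathbf x)=M(\mathbf x - x_0)+U(x_0)$, where $M$ is the linear part of $U$, viewed as a linear map from the direction space $S$ of $\mathrm{aff}(\mathcal N)$ onto the direction space $S'$ of $\mathrm{aff}(U(\mathcal N))$ (indeed $M(S)=S'$ since $U$ is affine and surjects onto the affine hull of its image). Since $U$ is a bijection between the complexes of $\mathcal N$ and those of $U(\mathcal N)$ carrying reactions to reactions and source complexes to source complexes, with $U(P')-U(P)=M(P'-P)$, the whole argument rests on two elementary identities: for every reaction $P\to P'$ and every $\mathbf w\in S'$,
\[
(U(P')-U(P))\cdot \mathbf w=M(P'-P)\cdot \mathbf w=(P'-P)\cdot M^{t}\mathbf w,
\]
and for every complex $P$ one has $U(P)\cdot\mathbf w=P\cdot(M^{t}\mathbf w)+c$, with $c$ a constant depending only on $\mathbf w$. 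Given a sweep direction $\mathbf w$ for $U(\mathcal N)$, I set $\mathbf v:=M^{t}\mathbf w$.

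Next I would verify that the sweep test for $\mathbf w$ in $U(\mathcal N)$ holds if and only if the sweep test for $\mathbf v$ in $\mathcal N$ holds. The first identity shows that a reaction is $\mathbf w$-essential in $U(\mathcal N)$ exactly when its preimage is $\mathbf v$-essential in $\mathcal N$, so $U(\mathcal N)_{\mathbf w}=U(\mathcal N_{\mathbf v})$ and $\mathcal{SC}(U(\mathcal N)_{\mathbf w})=U(\mathcal{SC}(\mathcal N_{\mathbf v}))$. The second identity shows that the $\mathbf w$-height of complexes in $U(\mathcal N)$ is an increasing affine function of their $\mathbf v$-height in $\mathcal N$, so $U(P)\in esupp^{\mathbf w}(U(\mathcal N))$ precisely when $P\in esupp^{\mathbf v}(\mathcal N)$, and $U(P')\in esupp^{\mathbf w}(U(\mathcal N))_{<0}$ precisely when $P'\in esupp^{\mathbf v}(\mathcal N)_{<0}$. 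Hence the forbidden set (\ref{eq:sweep}) for $\mathbf w$ in $U(\mathcal N)$ is exactly the $U$-image of the forbidden set for $\mathbf v$ in $\mathcal N$, so one is empty iff the other is. Note also that $\mathbf w\neq 0$ forces $\mathbf v=M^{t}\mathbf w\neq 0$, since $M^{t}$ is injective on $S'=M(S)$; and if $\mathbf v=0$ both essential subnetworks are empty and both tests pass vacuously.

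It remains to match the admissible ranges of directions, and this is the only step where the hypothesis $U(\mathrm{aff}_+(\mathcal N))\subseteq\mathbb R^{d}_{\ge 0}$ enters; I expect it to be the crux. For the lower-endotactic claim I must show that every inward vector $\mathbf w\in\mathrm{iv}(\mathrm{aff}_+(U(\mathcal N)))$ pulls back to an inward vector $\mathbf v=M^{t}\mathbf w\in\mathrm{iv}(\mathrm{aff}_+(\mathcal N))$; granting this, a failure of the test for $U(\mathcal N)$ at some inward $\mathbf w$ would, by the correspondence above, produce a failure for $\mathcal N$ at the inward vector $\mathbf v$, contradicting that $\mathcal N$ is lower-endotactic. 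To establish the pullback I would use that for a polyhedron $C$ the cone $\mathrm{iv}(C)$ is the dual of its recession cone (consistent with Remark \ref{rem:ivC}). The hypothesis gives $U(\mathrm{aff}_+(\mathcal N))\subseteq\mathrm{aff}(U(\mathcal N))\cap\mathbb R^{d}_{\ge 0}=\mathrm{aff}_+(U(\mathcal N))$; these two polyhedra have the same affine hull, so their recession cones are nested and $\mathrm{rec}(\mathrm{aff}_+(U(\mathcal N)))\supseteq M(\mathrm{rec}(\mathrm{aff}_+(\mathcal N)))$. Dualizing and using $(MK)^{*}=(M^{t})^{-1}(K^{*})$ gives $\mathrm{iv}(\mathrm{aff}_+(U(\mathcal N)))\subseteq(M^{t})^{-1}(\mathrm{iv}(\mathrm{aff}_+(\mathcal N)))$, which is exactly $M^{t}\mathbf w\in\mathrm{iv}(\mathrm{aff}_+(\mathcal N))$, as required. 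The endotactic claim then follows from the same correspondence with no inward-vector refinement: there $\mathbf w$ ranges over all nonzero vectors of $S'$, the pullback $M^{t}\mathbf w$ lies in $S$, and the identities above again convert any failure for $U(\mathcal N)$ into one for $\mathcal N$.
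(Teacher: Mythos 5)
Your proof is correct and follows essentially the same route as the paper's: pull each sweep direction $\mathbf w$ for $U(\mathcal N)$ back to a direction $\mathbf v$ for $\mathcal N$, check that essential supports and the forbidden set (\ref{eq:sweep}) correspond under $U$, and use $U(\mathrm{aff}_+(\mathcal N))\subseteq\mathrm{aff}_+(U(\mathcal N))$ to see that inward vectors pull back to inward vectors, so a violation for $U(\mathcal N)$ would yield one for $\mathcal N$. You merely make explicit two steps the paper states geometrically — the pullback is $\mathbf v=M^{t}\mathbf w$, and the inwardness transfer follows from recession-cone duality — which is a welcome elaboration rather than a different argument.
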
 
\begin{proof}
We show the lower-endotactic case;
the proof for the endotactic case is similar.
Also, we assume that $U$ has rank two; a simpler version of the argument below 
works if $U$ has rank one. 
Let  
${\bf w}\in \mathrm{iv(aff}_+(U(\mathcal N))),$ and let $L=esupp^{\bf w}(U(\mathcal N)).$ 
Since $U$ takes parallel lines to parallel lines, there
exists a vector $\bf v$ such that $U^{-1}(L)=esupp^{\bf v}(\mathcal N)$
and 
$$esupp^{\bf w}(U(\mathcal N))_{<0}=U(esupp^{\bf v}(\mathcal N))_{<0}).$$
Because $U(\mathrm{aff}_+(\mathcal N))\subseteq \mathrm{aff}_+(U(\mathcal N)),$ we have $\mathrm{iv(aff}_+(U(\mathcal N)))
\subseteq \mathrm{iv} (U(\mathrm{aff}_+(\mathcal N)))$ and therefore ${\bf w}\in
\mathrm{iv}(U(\mathrm{aff}_+(\mathcal N))$. 
It follows that ${\bf v}\in \mathrm{iv} (\mathrm{aff}_+(\mathcal N)).$ Then, if 
$U(P)\to U(P')\in U(\mathcal R)$ such that $U(P)\in esupp^{\bf w}(U(\mathcal N))$ 
and $U(P')\in esupp^{\bf w}(U(\mathcal N))_{<0}$
then $P\to P'\in\mathcal R,$ $P\in esupp^{\bf v}(\mathcal N)$ and 
$P'\in esupp^{\bf v}(\mathcal R)_{<0},$ contradicting the fact that $\mathcal
N$ is lower-endotactic.  
\end{proof}
\end{section}

\begin{section}{2D-reduced mass-action systems}\label{sec:2Dred}

A key ingredient in the proof of
our main persistence result consists of studying projections of
trajectories of $\kappa$-variable mass-action systems
onto well-chosen two-dimensional subspaces of $\mathbb R^n.$
These special projected trajectories obey a specific type of
dynamics which we call {\em 2D-reduced mass-action}.  
In this section we show that bounded forward
trajectories of such dynamical systems are persistent. To this end we will
extend significantly the ideas from \cite{craciun_nazarov_pantea},
where they were introduced in the context of
two-species $\kappa$-variable mass-action systems.

\begin{subsection}{Definition and comparison of reaction rates}\label{sec:2DredDef}
Fix an integer $n\ge 2$ and let $l,k$ be two fixed elements of
$\{1,\ldots, n\}$ such that $l<k.$ 
Let $p_i,\ q_i$ be nonnegative
rational numbers for $i\in\{1,\ldots, n\}$ such that, for any $i,$ not both $p_i$
and $q_i$ are zero,
and such that $p_l=q_k=1$ and $p_k=q_l=0.$
Denote
 \begin{equation}\label{eq:psi}
\Psi=
\left(
\begin{matrix}
p_1 &\ldots &p_{l-1} &1 &p_{l+1}&\ldots &p_{k-1} &0 &p_{k+1} &\ldots &p_n\\
q_1 &\ldots &q_{l-1} &0 &q_{l+1}&\ldots &q_{k-1} &1 &q_{k+1} &\ldots &q_n\\
\end{matrix}
\right)^t.
\end{equation}
\begin{definition}\label{def:projVarMassAct}
Let $\Psi$ be a matrix of the form (\ref{eq:psi}).

(i) Let $\mathcal N=(\mathcal S, \mathcal C, \mathcal R)$ be a 
reaction network with two species, let
$\kappa:\mathbb R_{\geq 0}\to \mathbb R_{>0}^{\mathcal R}$
be a piecewise differentiable function and let $a\in\mathbb R^n.$
For all reactions $P\to P'\in \cal R,$ we define $K_{P\to P'}:\mathbb R_{\ge 0}\times \mathbb R^2_{\ge 0}\to \mathbb R_{\ge 0},$
\begin{equation}\label{eq:rRate}
K_{P\to P'}(t,{\bf x}) = \kappa_{P\to P'}(t)(\Psi {\bf x})^{\Psi P+a}.
\end{equation}
The reaction system $(\mathcal N, K)$ is called 
a {\em 2D-reduced planar mass-action system} and is denoted by
$(\mathcal N, \Psi, \kappa, a).$

(ii) For each $s\in\{1,\ldots, p\},$ let $\mathcal N_s=(\mathcal S, \mathcal C_s,
\mathcal R_s)$ be a two-species reaction network and let $(\mathcal N_s,\Psi,\kappa_s, a_s)$ be a 2D-reduced
mass-action system.
The sum $(\mathcal N, K) = \bigcup_{s=1}^p (\mathcal N_s,\Psi,\kappa_s, a_s)$ 
(recall
Definition \ref{def:sums}) 
is called a {\em 2D-reduced mass-action system.} 
\end{definition}

Therefore the concentration vector $c(t)=(x(t),y(t))$ of a 2D-reduced
mass-action system $\bigcup_{s=1}^p (\mathcal N_s,\Psi,\kappa_s, a_s)$
satisfies the following  differential equation:
\begin{equation}\label{eq:projVarMassAct}
\dot c(t)=\sum_{s=1}^p \sum_{P\to P'\in\mathcal
      R_s}\kappa_{s,P\to P'}(t)(\Psi c(t))^{\Psi P+a_s}(P'-P).
\end{equation}
Note that, by definition, $\kappa$ needs not be bounded away from zero and
infinity, as is the case for $\kappa$-variable mass-action
systems. However, we will require this condition to prove
persistence of 2D-reduced mass-action systems in Corollary \ref{cor:proj}.

The goal of this section is to study the persistence of 
2D-reduced mass-action systems.
One important component of our analysis is highlighting the reaction
whose rate at time $t\ge 0$ ``dominates'' the other reaction rates.
In view of (\ref{eq:rRate}) we then consider, for $A>0$ and for any $s\in
\{1,\ldots, p\},$ 
the sign of the difference
\begin{equation}\label{eq:monomialDiff}
(\Psi {\bf x})^{\Psi P+a_s} - A(\Psi {\bf x})^{\Psi P'+a_s},
\end{equation}
\noindent for all pairs of distinct source complexes $P,\ P'$
of $\mathcal N_s.$  
For simplicity, and without loss of generality, we assume that
$k=2$ and $l=1.$ Then
(\ref{eq:monomialDiff}) has the 
same sign as the following expression, which we denote by $\Lambda_{\alpha,\beta}^A(x,y):$ 
\begin{equation}\label{eq:lambdaFunction}
\Lambda^A_{\alpha, \beta} (x,y) = x^\alpha y^{-\beta}(p_3x+q_3y)^{p_3\alpha-q_3\beta}\ldots(p_nx+q_ny)^{p_n\alpha-q_n\beta} - A^D.
\end{equation}
Here $(\alpha,-\beta)=D(P-P')$ and $D$ denotes the least
common denominator of all  nonzero $p_i$ and $q_i$, $i\in\{3,\ldots, n\}.$
Note that all the exponents in (\ref{eq:lambdaFunction}) are integers.

The geometry of the curves $\Lambda^A_{\alpha, \beta}(x,y)=0$ within
$\mathbb R_{>0}^2$
is very relevant to our discussion. An immediate goal, which we
pursue next, is to find simple approximations for these curves. We
will see that within appropriate subsets of $\mathbb R_{>0}^2,$ 
$\Lambda^A_{\alpha, \beta}(x,y)=0$ may be approximated by 
power curves $y=Cx^{\tau}$ that are ordered in a useful way, as we
will explain later in the paper. 
Let
\begin{equation}\label{eq:delta'delta''}
\overline{\Delta}=\left(1+\sum_{i=3}^n p_i\right)\alpha,\  
\overline{\overline{\Delta}}=\left(1+\sum_{i=3}^n q_i\right)\beta
\text{ and } \Delta =  \overline{\Delta} - \overline{\overline{\Delta}}.
\end{equation}

\begin{lemma}\label{lem:uniqueBranch} Suppose $\Delta\neq 0.$

(i) If $\alpha\beta < 0$  then for all $x>0$ there exists a unique $y>0$ such that $\Lambda^A_{\alpha,\beta}(x,y)=0.$

(ii) If $\alpha\beta > 0$  then for all small
enough $x>0$ there exists a unique $y>0$ such that $\Lambda^A_{\alpha,\beta}(x,y)=0.$
\end{lemma}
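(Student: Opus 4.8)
The plan is to read the vanishing of $\Lambda^A_{\alpha,\beta}$ as a level set of the strictly positive function
\[
G(x,y)=x^{\alpha}y^{-\beta}\prod_{i=3}^{n}(p_ix+q_iy)^{p_i\alpha-q_i\beta},
\]
so that $\Lambda^A_{\alpha,\beta}(x,y)=0$ is equivalent to $G(x,y)=A^{D}$ on $\mathbb R_{>0}^2$ (recall (\ref{eq:lambdaFunction})). The structural observation driving everything is that $G$ is positively homogeneous: adding the exponents gives total degree $\alpha-\beta+\sum_{i=3}^{n}(p_i\alpha-q_i\beta)=\overline{\Delta}-\overline{\overline{\Delta}}=\Delta$ by (\ref{eq:delta'delta''}), hence $G(\lambda x,\lambda y)=\lambda^{\Delta}G(x,y)$ for all $\lambda>0$. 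Substituting $y=xu$ with $u>0$ therefore converts the equation into the one-variable problem
\[
\phi(u)=A^{D}x^{-\Delta},\qquad \phi(u):=G(1,u)=u^{-\beta}\prod_{i=3}^{n}(p_i+q_iu)^{p_i\alpha-q_i\beta},
\]
and for a fixed $x>0$ the solutions $y>0$ are in bijection with the roots $u>0$ of this equation. It thus suffices to analyze the monotonicity and range of $\phi$ (equivalently of $y\mapsto\log G(x,y)$).

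For part (i) I would argue directly in $y$. Writing $g(y):=\log G(x,y)$ gives
\[
g'(y)=-\frac{\beta}{y}+\sum_{i=3}^{n}(p_i\alpha-q_i\beta)\frac{q_i}{p_ix+q_iy}.
\]
When $\alpha\beta<0$ all summands share one sign: if $\alpha>0>\beta$ then $-\beta>0$ and each $p_i\alpha-q_i\beta>0$, so $g'>0$; if $\alpha<0<\beta$ the reverse holds and $g'<0$. Hence $g$ is strictly monotone on $(0,\infty)$. Inspecting the boundary behavior (the term $-\beta\log y$ together with the factors having $p_i=0$ force $g\to\mp\infty$ as $y\to0^{+}$ and $g\to\pm\infty$ as $y\to\infty$) shows $g$ maps $(0,\infty)$ onto all of $\mathbb R$. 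Therefore $g(y)=D\log A$ has exactly one root for every $x>0$, which is precisely assertion (i); note that this part does not even use $\Delta\neq0$.

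For part (ii) the coefficients $p_i\alpha-q_i\beta$ have mixed signs, so $\phi$ need not be globally monotone, and controlling this is the main obstacle. Here homogeneity is used decisively: since $\Delta\neq0$, the target $A^{D}x^{-\Delta}$ tends to $0$ or to $+\infty$ as $x\to0^{+}$, according to the sign of $\Delta$. The strategy is to pin any root into a tail of $\phi$ on which $\phi$ is strictly monotone. Concretely, I would read off the leading exponents $\phi(u)\sim c_{0}\,u^{-\beta\left(1+\sum_{p_i=0}q_i\right)}$ as $u\to0^{+}$ and $\phi(u)\sim c_{\infty}\,u^{\,\Delta-\alpha\left(1+\sum_{q_i=0}p_i\right)}$ as $u\to\infty$, so that on each end $\phi$ is eventually strictly monotone with power-law blow-up or decay. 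For $x$ small enough the value $A^{D}x^{-\Delta}$ exceeds (or falls below) every critical value of $\phi$ on each fixed compact subinterval of $(0,\infty)$, forcing any root into a monotone tail, where it is unique. The delicate point, which I expect to require the most care, is excluding a second root that escapes into the opposite tail; this is exactly where $\Delta\neq0$ and the comparison between the two end-exponents and $\Delta$ must be exploited, to ensure that only one end of $\phi$ can attain the extreme value $A^{D}x^{-\Delta}$ once $x$ is sufficiently small.
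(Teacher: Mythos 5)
Your treatment of part (i) is correct and complete, and takes a mildly different route from the paper: you differentiate $\log G(x,\cdot)$ and observe that all terms of the derivative share one sign when $\alpha\beta<0$, whereas the paper clears the denominator and applies Descartes' rule of signs to a polynomial in $y$ whose coefficients are all positive except the constant term $-A^D$. Both arguments are sound and neither uses $\Delta\neq 0$.

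Part (ii) is where the lemma has content, and your proposal does not prove it. The homogeneity reduction to $\phi(u)=A^Dx^{-\Delta}$ and the tail exponents $e_0=-\lambda_y\beta$ (at $u\to 0^+$) and $e_\infty=\Delta-\lambda_x\alpha$ (at $u\to\infty$) are all correct, but the step you defer --- excluding a second root in the opposite tail --- is the entire difficulty, and it cannot be closed by the comparison you outline. Taking $\alpha,\beta>0$, one always has $e_0<0$, so $\phi\to+\infty$ as $u\to 0^+$; if moreover $\Delta>\lambda_x\alpha$ then $e_\infty>0$ as well, so $\phi\to+\infty$ at both ends while the level $A^Dx^{-\Delta}$ also tends to $+\infty$, and the level line meets both tails. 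This configuration is admissible under the stated hypotheses: with $n=3$, $p=(1,0,1)$, $q=(0,1,1)$ (so $D=\lambda_x=\lambda_y=1$) and $(\alpha,\beta)=(3,1)$, one has $\Delta=4\neq 0$ and $\Lambda^A_{3,1}(x,y)=x^3y^{-1}(x+y)^2-A$; the equation $x^3(x+y)^2=Ay$ is a quadratic in $y$ with two distinct positive roots for every small $x>0$. So the ``delicate point'' you flag is not a technicality to be filled in later: uniqueness requires $e_\infty\le 0$, i.e.\ $\overline\Delta-\lambda_x\alpha\le\overline{\overline\Delta}$ (together with the mirror condition obtained by exchanging the roles of $x$ and $y$), and a complete argument must identify and invoke whatever guarantees this. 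For comparison, the paper's proof clears denominators to obtain the polynomial $F(x,y)$ of (\ref{eq:LambdaSet}), a difference of homogeneous polynomials of degrees $\overline{\overline\Delta}$ and $\overline\Delta$, and counts sign changes of the coefficients of $F_x(y)$ for small $x$; that count implicitly places the $y$-degrees $[\lambda_y\beta,\overline{\overline\Delta}]$ of the positive monomials above the range $[0,\overline\Delta-\lambda_x\alpha]$ of the negative ones, which is exactly the inequality your sketch is missing. If you pursue your route, isolate that inequality as an explicit hypothesis; once it is available, your tail argument (a level above, resp.\ below, all critical values of $\phi$ is attained exactly once, on a tail where $\phi$ is eventually monotone) does finish the proof.
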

\begin{proof}
(i) If  $\alpha\beta<0,$ without loss of generality we may take $\alpha>0$
and  $\beta<0.$ 
For any fixed $x>0,$ $\Lambda_{\alpha,\beta}^A(x,y)$ is a polynomial in $y$ 
whose coefficients are all positive, except for its free term $-A^D.$ 
The Descartes rule of signs implies that this polynomial has a unique positive root.

\noindent (ii) If $\alpha\beta > 0,$ we may assume that $\alpha>0$ and  $\beta>0.$
$\Lambda^A_{\alpha,\beta}(x,y) = 0$ implies 
\begin{equation}\label{eq:LambdaSetBefore}
A^Dy^{\beta}(p_3x+q_3y)^{q_3 \beta}\ldots (p_nx+q_ny)^{q_n
  \beta}=x^{\alpha}(p_3x+q_3y)^{p_3 \alpha}\ldots (p_nx+q_ny)^{p_n \alpha}.
\end{equation}
\noindent We rewrite this equality by excluding the factors of zero
power and merging the powers of $x$ in the left hand side and the
powers of $y$ in right hand side. We denote 
\begin{equation}\label{eq:abA'}
\lambda_x=1+\sum_{\substack{{i\in\{3,\ldots,n|}\\{q_i=0\}}}}p_i
\quad \text{ and }
\lambda_y=1+\sum_{\substack{{i\in\{3,\ldots,n|}\\{p_i=0\}}}}q_i,
\end{equation}
and we let $i_l$, $l\in\{1,\ldots, I\}$ be the indices for
which both  $p_{i_l}$ and $q_{i_l}$ are strictly positive.
Then we have
\begin{equation}\label{eq:LambdaSet}
A'y^{\lambda_y\beta}(p_{i_1}x+q_{i_1}y)^{q_{i_1}\beta}\ldots (p_{i_I}x+q_{i_I}y)^{q_{i_I}\beta} 
-x^{\lambda_x\alpha}(p_{i_1}x+q_{i_1}y)^{p_{i_1} \alpha}\ldots (p_{i_I}x+q_{i_I}y)^{p_{i_I} \alpha} = 0
\end{equation}
where $A' = A^D\prod_{i\in\{1,\ldots, n\}\backslash\{i_1,\ldots,
  i_I\}} q_i^{q_i\beta} p_i^{-p_i\alpha}. $ We denote the polynomial
in (\ref{eq:LambdaSet}) by $F(x,y).$

If $\lambda_y\beta>\overline\Delta-\lambda_x\alpha$ then equation
(\ref{eq:LambdaSet}) yields 
\begin{eqnarray*}
F(x,y)&=&\overline{\overline{C}}_0 y^{\overline{\overline{\Delta}}}+(\overline{\overline{C}}_1\ x)y^{\overline{\overline{\Delta}}-1}+\ldots+(\overline{\overline{C}}_{\overline{\overline{\Delta}}-\lambda_y\beta}\ x^{\overline{\overline{\Delta}}-\lambda_y\beta})y^{\lambda_y\beta}\\
&-&(\overline C_{\lambda_x\alpha} x^{\lambda_x\alpha})y^{\overline\Delta-\lambda_x\alpha}-(\overline C_{\lambda_x\alpha+1}\ x^{\lambda_x\alpha+1})y^{\overline\Delta-\lambda_x\alpha-1}-(\overline C_{\overline\Delta -1}\ x^{\overline\Delta -1})y-\overline C_{\overline\Delta}\ x^{\overline\Delta}=0,
\end{eqnarray*}
where the coefficients $\overline{\overline C}_k$ and $\overline C_k$
are positive and are obtained from expanding the first, respectively
second term of the difference (\ref{eq:LambdaSet}),
and if $\lambda_y\beta\leq\overline\Delta-\lambda_x\alpha$ we have 
\begin{eqnarray*}\label{eq:Fxy2}
F(x,y)&=&\overline{\overline{C}}_0y^{\overline{\overline{\Delta}}}+(\overline{\overline{C}}_1\
x)y^{\overline{\overline{\Delta}}-1} +\ldots+
(\overline{\overline{C}}_{\overline{\overline{\Delta}}-(\overline\Delta-\lambda_x\alpha+1)}\ x^{\overline{\overline{\Delta}}-(\overline\Delta - \lambda_x\alpha+1)})y^{\overline\Delta - \lambda_x\alpha+1}+\\
&+&(\overline{\overline
  C}_{\overline{\overline\Delta}-(\overline\Delta-\lambda_x\alpha)}\
x^{\overline{\overline\Delta}-(\overline\Delta-\lambda_x\alpha)} -
\overline C_{\lambda_x\alpha}
x^{\lambda_x\alpha})y^{\overline\Delta-\lambda_x\alpha}+\ldots\\
&+&(\overline{\overline C}_{\overline{\overline\Delta}-\lambda_y\beta}\ x^{\overline{\overline\Delta}-\lambda_y\beta} - \overline C_{\overline\Delta-\lambda_y\beta}\ x^{\overline\Delta-\lambda_y\beta})y^{\lambda_y\beta}\\
&-&(\overline C_{\overline\Delta - (\lambda_y\beta-1)}\ x^{\overline\Delta - (\lambda_y\beta-1)})y^{\lambda_y\beta-1}-\ldots-
(\overline C_{\overline\Delta -1}\ x^{\overline\Delta -1})y-\overline C_{\overline\Delta}\ x^{\overline\Delta}=0.
\end{eqnarray*}

In the first case, for any fixed $x>0,$ the coefficients of $F_x(y) = F(x,y)$ viewed as a polynomial in 
$y$ change sign exactly once. It follows from the Descartes rule of signs that $F_x(y)=0$ 
has a unique positive solution. In the second case, the coefficients of $F_x(y)$ that are 
binomials in $x$ are of the form 
$\overline{\overline C}x^{\overline{\overline\Delta}-k}-\overline C x^{\overline\Delta - k}$ 
and  for small $x$ are all either positive if $\overline{\overline\Delta}<\overline\Delta$ 
or negative if $\overline{\overline\Delta}>\overline\Delta.$ Therefore for small enough 
$x$ the polynomial $F_x(y)$ changes the sign of its coefficients only once either at 
$y^{\overline\Delta-\lambda_x\alpha}$ if $\Delta<0$ or at $y^{\lambda_y\beta-1}$ if $\Delta>0$. 
It follows that the equation $F_y(x)=0$ a unique positive solution.
\end{proof}

\begin{remark}\label{rem:yalphabeta}
Lemma $\ref{lem:uniqueBranch}$ implies that for $\alpha\beta<0,$ the curve
$\{(x,y)\in\mathbb R_{>0}^2\mid\Lambda^A_{\alpha,\beta}(x,y)=0\}$ is
the graph of a function $y^A_{\alpha,\beta}:\mathbb R_{>0}\to \mathbb
R_{>0}.$  It is easy to see that
$\lim_{x\to 0} y^A_{\alpha,\beta}(x) = \infty \text{ and  } \lim_{x\to \infty} y^A_{\alpha,\beta}(x) = 0.$
On the other hand, if $\alpha\beta > 0,$ the function
$y^A_{\alpha,\beta}$ is defined only for small $x>0$: 
there exists $M_{\alpha,\beta}^A>0$ such that 
$\{(x,y)\in(0,M_{\alpha,\beta}^A)\times \mathbb R_{>0}\mid\Lambda^A_{\alpha,\beta}(x,y)=0\}$
is the graph of $y^A_{\alpha,\beta}:(0,M_{\alpha,\beta}^A)\to \mathbb R_{>0}.$
We claim that in this case we have $\lim_{x\to
  0}y_{\alpha,\beta}^A=0.$ 
Indeed, suppose $\Delta<0.$ If for some
$0\le l<\infty,$ $(0,l)$ is a limit point of the curve $\{(x,y)\in\mathbb R^2_{>0}\mid
\Lambda^A_{\alpha,\beta}(x,y)=0\},$ then  plugging $(0,l)$ into
(\ref{eq:LambdaSet}) yields $l=0.$ It remains to check that
$(0,\infty)$ is not a limit point of  the curve above. 
If $\{(x_n,y_n)\}_{n>0}$ is a sequence of points 
with positive coordinates such that $\lim_{n\to \infty}(x_n,y_n) =
(0,\infty)$ and $\Lambda^A_{\alpha,\beta}(x_n,y_n) = 0,$ 
from (\ref{eq:LambdaSet}) we get
$\mathcal O(x_n^{\lambda_x\alpha})=\mathcal O(y_n^{-\Delta +
  \lambda_x\alpha})$ 
as $n\to\infty,$ which contradicts $\Delta<0.$ 
The case $\Delta>0$ follows from symmetry.
\end{remark}

\begin{lemma}\label{lem:powerFunction} Suppose $\alpha\beta>0.$

(i) If $\Delta = 0$ there exist positive
constants $\gamma^A_{\alpha,1},\ldots, \gamma^A_{\alpha, N}$ for some
integer $N>1$ such that 
$$\Lambda^A_{\alpha,\beta}(x,y)=0\text{ for some }(x,y)\in\mathbb R_{>0}^2 \text{ if and only if }y=\gamma^A_{\alpha, i}x \text{ for some }i\in \{1,\ldots, N\}.$$

(ii) If $\Delta\neq 0$ there exists a strictly
increasing function $\tau:\mathbb R _{>0}\to\mathbb R_{>0}$ such that, 
for function $y_{\alpha,\beta}^A$ introduced in Remark
\ref{rem:yalphabeta}, the limit
$$\displaystyle\lim_{x\to 0}\frac{y^A_{\alpha,\beta}(x)}{x^{\tau(\alpha/\beta)}}$$
\noindent exists, is positive and finite.
We denote this limit by $C^A_{\alpha, \beta}.$
\end{lemma}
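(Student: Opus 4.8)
The plan is to read $\Lambda^A_{\alpha,\beta}=\Phi-A^D$, where $\Phi(x,y)=x^\alpha y^{-\beta}\prod_{i=3}^n(p_ix+q_iy)^{p_i\alpha-q_i\beta}$ is the product appearing in (\ref{eq:lambdaFunction}), and to exploit that by (\ref{eq:delta'delta''}) the degree of homogeneity of $\Phi$ is exactly $\overline{\Delta}-\overline{\overline{\Delta}}=\Delta$. Throughout I take $\alpha,\beta>0$ and write $r=\alpha/\beta$.

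\emph{Part (i).} When $\Delta=0$, $\Phi$ is homogeneous of degree $0$, so $\Phi(x,y)=g(y/x)$ with $g(u):=\Phi(1,u)=u^{-\beta}\prod_{i=3}^n(p_i+q_iu)^{p_i\alpha-q_i\beta}$; hence $\Lambda^A_{\alpha,\beta}(x,y)=0$ iff $g(y/x)=A^D$, and the zero set is the union of rays $y=\gamma x$ indexed by the positive solutions $u=\gamma$ of $g(u)=A^D$. First I would show this solution set is finite: since all exponents in $g$ are integers (as noted after (\ref{eq:lambdaFunction})), multiplying $g(u)-A^D$ by the positive quantity $u^\beta\prod_{p_i\alpha-q_i\beta<0}(p_i+q_iu)^{q_i\beta-p_i\alpha}$ turns the equation into a polynomial identity in $u$, which has finitely many positive roots. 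Nonemptiness follows from the intermediate value theorem after checking the endpoint behaviour: using that for each $i$ at least one of $p_i,q_i$ is nonzero one gets $g(u)\to+\infty$ as $u\to 0^+$, while evaluating the leading exponent at infinity under $\Delta=0$ gives $g(u)\to 0$ as $u\to\infty$. These roots are the constants $\gamma^A_{\alpha,1},\dots,\gamma^A_{\alpha,N}$.

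\emph{Part (ii).} The plan is a dominant-balance (Newton--Puiseux) analysis of the branch $y=y^A_{\alpha,\beta}(x)$ supplied by Lemma~\ref{lem:uniqueBranch} and Remark~\ref{rem:yalphabeta}, which we already know satisfies $y^A_{\alpha,\beta}(x)\to 0$; this last fact tells us we are on the edge of the Newton polygon facing the origin. Substituting the ansatz $y=x^\tau z$, $z>0$, into $\Phi(x,y)=A^D$, each factor with $q_i>0$ contributes $x^\tau(q_iz+p_ix^{1-\tau})$ and each factor with $q_i=0$ contributes $p_ix$, so the total power of $x$ multiplying the remaining $z$-expression is $\alpha\lambda_x-\tau(\overline{\overline{\Delta}}-\alpha\mu_x)$, with $\lambda_x,\lambda_y$ as in (\ref{eq:abA'}) and $\mu_x=\sum_{q_i>0}p_i$. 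Forcing this power to vanish selects, when $\Delta<0$,
\[
\tau=\frac{\lambda_x\alpha}{\lambda_x\alpha-\Delta}=\frac{\lambda_x r}{Q-\mu_x r}\in(0,1),\qquad P:=1+\sum_{i=3}^np_i,\ \ Q:=1+\sum_{i=3}^nq_i,
\]
consistent with $y\gg x$; the symmetric computation for $\Delta>0$ (factoring $x$, not $x^\tau$, out of the factors with $p_i>0$) gives $\tau=1+\Delta/(\lambda_y\beta)=1+(Pr-Q)/\lambda_y>1$, consistent with $y\ll x$. Both expressions depend on $\alpha,\beta$ only through $r$, agree and equal $1$ at $\Delta=0$ (where $P-\mu_x=\lambda_x$), and have positive $r$-derivative (namely $\lambda_xQ/(Q-\mu_x r)^2$ and $P/\lambda_y$), so $\tau$ is a well-defined strictly increasing map $\mathbb R_{>0}\to\mathbb R_{>0}$.

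With $\tau$ so chosen the net power of $x$ is zero, and letting $x\to 0$ (so $x^{1-\tau}\to 0$, resp. $x^{\tau-1}\to 0$) gives $\Phi(x,x^\tau z)\to\Phi_0(z)$, where $\Phi_0(z)$ is a positive constant times a \emph{negative} power of $z$ in either regime; hence $\Phi_0$ is strictly decreasing from $+\infty$ to $0$ and $\Phi_0(z)=A^D$ has a unique positive root $C=C^A_{\alpha,\beta}$. Since this convergence is uniform on compact subsets of $\{z>0\}$ and, by Remark~\ref{rem:yalphabeta}, for each small $x$ the value $z(x)=y^A_{\alpha,\beta}(x)/x^\tau$ is the unique positive root of $z\mapsto\Phi(x,x^\tau z)-A^D$, comparing with $\Phi_0(C\pm\varepsilon)$ traps $z(x)$ in $(C-\varepsilon,C+\varepsilon)$ for small $x$. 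Therefore $\lim_{x\to0}y^A_{\alpha,\beta}(x)/x^{\tau(\alpha/\beta)}=C$ exists and is positive and finite.

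\emph{Main obstacle.} The delicate point is the last step: converting the formal dominant balance into a genuine limit. One must verify that the substitution $y=x^\tau z$ really makes \emph{all} subdominant monomials of $\Phi$ (equivalently of $F$ in (\ref{eq:LambdaSet})) vanish in the limit, and that $z(x)$ cannot drift to $0$ or $\infty$. The cleanest route is the uniform-on-compacts convergence to the strictly monotone $\Phi_0$, combined with the already-established \emph{uniqueness and convergence to $0$} of the branch $y^A_{\alpha,\beta}$ from Lemma~\ref{lem:uniqueBranch}; the algebraic bookkeeping showing $\tau$ is a single increasing function of $\alpha/\beta$ across the two regimes (and matches continuously at $\Delta=0$) is routine but must be carried out with care.
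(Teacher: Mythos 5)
Your proof is correct, and while part (i) is essentially the paper's argument (reduce by homogeneity to a one-variable equation, get finiteness because the exponents are integers, and get existence from the sign change between $u\to 0^+$ and $u\to\infty$), your part (ii) takes a genuinely different and more self-contained route. The paper identifies $T$ by invoking the general theory of Puiseux expansions: Lemma \ref{lem:uniqueBranch} guarantees a unique branch, the expansion $y=Cx^T+\cdots$ is quoted from \cite{sturmfels:pol_eq}, and $T$ is read off the lower boundary of the Newton polygon of $F$ after ruling out $T=1$. You instead perform the equivalent degree-balancing directly via the rescaling $y=x^{\tau}z$, obtain the limiting one-variable function $\Phi_0(z)=\mathrm{const}\cdot z^{-(\lambda_x\alpha-\Delta)}$ (resp.\ $z^{-\lambda_y\beta}$), and convert the formal balance into an actual limit by the intermediate value theorem plus the uniqueness of the positive root from Lemma \ref{lem:uniqueBranch}: since $\Phi(x,x^{\tau}(C-\varepsilon))>A^D>\Phi(x,x^{\tau}(C+\varepsilon))$ for small $x$, the unique root $z(x)$ is trapped in $(C-\varepsilon,C+\varepsilon)$. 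This replaces the Puiseux black box with an elementary squeeze, and as a bonus it does not require separately excluding the slope $-1$ edge. One point worth recording: your closed forms $\tau=\lambda_x\alpha/(\lambda_x\alpha-\Delta)$ for $\Delta<0$ and $\tau=1+\Delta/(\lambda_y\beta)$ for $\Delta>0$ agree exactly with the Newton-polygon slopes the paper computes in (\ref{eq:newtonAB}) (and reduce correctly to $\tau=\alpha/\beta$ in the two-species case), whereas the paper's final displayed piecewise formula for $\tau(\sigma)$ appears to contain typographical errors -- as printed it is not even continuous at $\sigma=\sum q_i/\sum p_i$ -- so your version is the one to trust; the monotonicity and continuity of $\tau$, which is all that is used downstream in properties (P1)--(P4), holds for your formula by the derivative computation you give.
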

\begin{proof}
$(i).$ Dividing (\ref{eq:LambdaSet}) by
$x^{\overline\Delta}=x^{\overline{\overline\Delta}}$ and letting $\gamma=y/x$ yields
$$A'\gamma^{\lambda_y\beta}(p_{i_1}+q_{i_1}\gamma)^{q_{i_1}\beta} \ldots (p_{i_I}+q_{i_I}\gamma)^{q_{i_I}\beta}-(p_{i_1}+q_{i_1}\gamma)^{p_{i_1}\alpha} \ldots (p_{i_I}+q_{i_I}\gamma)^{p_{i_I}\alpha}=0.$$
We denote by  $\overline F(\gamma)$ the polynomial above. 
The positive term in the expression of $\overline F$ has degree $\overline{\Delta},$
and the negative term has degree $\overline\Delta - \lambda_x\alpha,$
therefore $\lim_{\gamma\to\infty}\overline F(\gamma)=\infty.$ Since
$\overline F(0)<0,$ there exists at least one positive root of
$\overline F.$ Denoting the positive roots of $\overline F$ by
$\gamma^A_{\alpha,1},\ldots, \gamma^A_{\alpha, N}$ completes the proof.

\noindent $(ii)$ We know from see Remark \ref{rem:yalphabeta} that  
$(0,0)$ is a limit point of  
$\{(x,y)\in\mathbb R_{>0}^2\mid \Lambda_{\alpha,\beta}^A(x,y)=0\}.$
Lemma \ref{lem:uniqueBranch} implies
that this curve has a unique Puiseux expansion 
in a neighborhood of $(0,0)$ (see \cite{sturmfels:pol_eq} for a
discussion of Puiseux expansions). By making
$M_{\alpha,\beta}^A$ from Remark \ref{rem:yalphabeta} as small as
necessary for the Puiseux expansion to hold in
$x\in(0,M_{\alpha,\beta}^A)$, we have, for all
$x\in(0,M_{\alpha,\beta}^A)$: 
\begin{equation}\label{puiseux}
y_{\alpha,\beta}^A(x) = C_{\alpha,\beta}^A x^T + \text{ higher order terms in } x
\end{equation}
\noindent where $C_{\alpha,\beta}^A\in\mathbb R$ and $T >0$ is a
rational number. The exponent $T$ in (\ref{puiseux})
is equal to the negative of one of the slopes in the lower  boundary 
of the Newton polygon of the polynomial $F(x;y)$ defined in 
(\ref{eq:LambdaSet}); (see \cite{sturmfels:pol_eq} for more details). 
\begin{figure}
\begin{center}
\begin{tabular}{cc}
\hspace{-.5cm}
\begin{tikzpicture}[thick, >=stealth, scale = .31] 
\draw[very thin,gray](0,0) -- (16,0); 
\draw[very thin,gray](0,0) -- (0,16); 

\scriptsize
\node at (-1.5, 15){$(0, \overline \Delta)$};
\node at (11.5, 7){$(\overline \Delta-\lambda_x\alpha,\lambda_x\alpha)$};
\node at (10, -1){$(\overline{\overline \Delta},0)$};
\node[fill=white] at (.5,6){$(\lambda_y\beta, \overline{\overline \Delta}-\lambda_y\beta)$};

\draw[very thin,gray](0,15) -- (15,0); 
\draw[very thin,gray](0,10) -- (10,0); 

\draw[very thick] (10,0)--(4,6)--(0,15);

\shadedraw [shading=ball] (0,15) circle (.2cm);
\shadedraw [shading=ball] (1,14) circle (.2cm);
\shadedraw [shading=ball] (2,13) circle (.2cm);
\shadedraw [shading=ball] (6,9) circle (.2cm);
\shadedraw [shading=ball] (7,8) circle (.2cm);
\shadedraw [shading=ball] (8,7) circle (.2cm);

\shadedraw [shading=ball] (10,0) circle (.2cm);
\shadedraw [shading=ball] (9,1) circle (.2cm);
\shadedraw [shading=ball] (8,2) circle (.2cm);
\shadedraw [shading=ball] (6,4) circle (.2cm);
\shadedraw [shading=ball] (5,5) circle (.2cm);
\shadedraw [shading=ball] (4,6) circle (.2cm);

\node[rotate=-45, fill=white] at (4,11){$\bf \ldots$} ;
\node[rotate=-45, fill=white] at (7,3){$\bf \ldots$} ;
\end{tikzpicture} 
&\hspace{.8cm}
\begin{tikzpicture}[thick, >=stealth, scale = .31] 
\draw[very thin,gray](0,0) -- (16,0); 
\draw[very thin,gray](0,0) -- (0,16); 

\scriptsize
\node at (-1.5, 10){$(0, \overline \Delta)$};
\node[fill=white] at (2.7, 3.5){$(\overline \Delta-\lambda_x\alpha,\lambda_x\alpha)$};
\node at (15, -1){$(\overline{\overline \Delta},0)$};
\node[fill=white] at (10.5,8.6){$(\lambda_y\beta, \overline{\overline \Delta}-\lambda_y\beta)$};

\draw[very thin,gray](0,15) -- (15,0); 
\draw[very thin,gray](0,10) -- (10,0); 

\draw[very thick] (0,10)--(6,4)--(15,0);

\shadedraw [shading=ball] (0,10) circle (.2cm);
\shadedraw [shading=ball] (1,9) circle (.2cm);
\shadedraw [shading=ball] (2,8) circle (.2cm);
\shadedraw [shading=ball] (4,6) circle (.2cm);
\shadedraw [shading=ball] (5,5) circle (.2cm);
\shadedraw [shading=ball] (6,4) circle (.2cm);

\shadedraw [shading=ball] (15,0) circle (.2cm);
\shadedraw [shading=ball] (14,1) circle (.2cm);
\shadedraw [shading=ball] (13,2) circle (.22cm);
\shadedraw [shading=ball] (9,6) circle (.22cm);
\shadedraw [shading=ball] (8,7) circle (.2cm);
\shadedraw [shading=ball] (7,8) circle (.2cm);

\node[rotate=-45, fill=white] at (11,4){$\bf \ldots$} ;
\node[rotate=-45, fill=white] at (3,7){$\bf \ldots$} ;
\end{tikzpicture} 
\end{tabular}
\end{center}
\caption{Monomials of $F(x;y)$ (see \ref{eq:LambdaSet}). The lattice point $(a,b)$ represents the monomial $x^by^a$ and the lower boundary of negative slope of the 
associated Newton polygon is the thick polygonal line. The picture on the left corresponds to $\Delta>0$ and the one on the right corresponds to $\Delta<0.$}\label{fig:newton}
\end{figure}
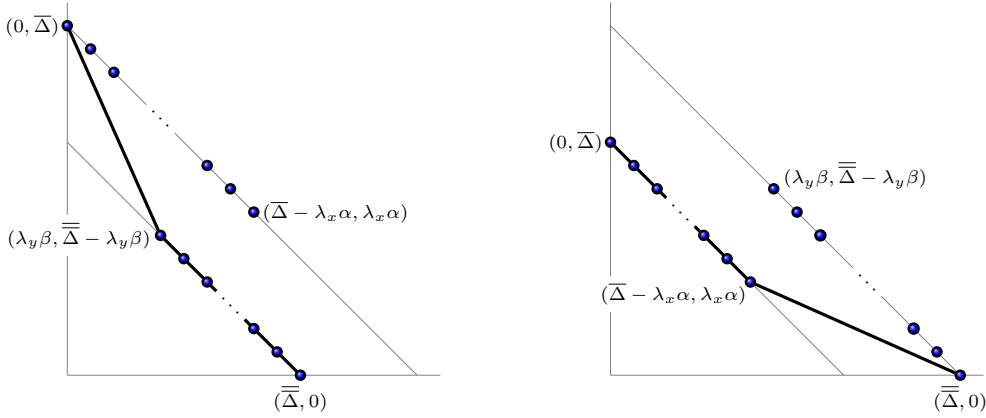
Since $F$ is the difference of two
homogeneous polynomials of degrees $\overline{\Delta}$ and
$\overline{\overline{\Delta}},$ its Newton polygon can be easily illustrated
(see Figure \ref{fig:newton}) and the slopes of its lower boundary are 
\begin{equation}\label{eq:newtonAB}
\left\{-1,
  -\frac{\overline\Delta-(\overline{\overline\Delta}-\lambda_y\beta)}{\lambda_y\beta}\right\}
\text{ or }\left\{-1, -\frac{\lambda_x\alpha}{\overline{\overline\Delta}-(\overline\Delta-\lambda_x\alpha)}\right\} 
\end{equation}
if $\Delta > 0$ or  $\Delta < 0,$ respectively.
Note that $T$ cannot be equal to one, for otherwise from (\ref{eq:LambdaSet}) it
would follow that $\mathcal O(x^{\overline\Delta})=\mathcal
O(x^{\overline{\overline\Delta}})$, contradicting $\overline\Delta\neq\overline{\overline\Delta};$ therefore $-T$ is given by the fractions in
(\ref{eq:newtonAB}). Then, if we define the function 
$\tau:\mathbb R _{>0}\to \mathbb R_{>0}$  as follows:
$$\tau(\sigma)=
\begin{cases}
\lambda_x\left(\displaystyle\frac{1}{\sigma}\displaystyle\sum_{i=1}^nq_i-\sum_{i=1}^n
  p_i\right)^{-1},&\text{ if } \displaystyle\sigma < \sum_{i=1}^n q_i/\sum_{i=1}^n p_i,\\
1, &\text{ if } \displaystyle\sigma = \sum_{i=1}^n q_i/\sum_{i=1}^n p_i,\\
\displaystyle{\frac{1}{\lambda_y}}\left(\sigma\sum_{i=1}^np_i-\sum_{i=1}^nq_i\right),
&\text{ if }  \displaystyle\sigma > \sum_{i=1}^n q_i/\sum_{i=1}^n p_i,
\end{cases}
$$
we can easily see that $T=\tau(\alpha/\beta).$ Note that $\tau$ is
continuous and strictly increasing. Moreover, the value of $\sigma$
for which $\tau(\sigma)=1$ corresponds to $\Delta=0$,
and thus the statement in part $(i)$ of the lemma is incorporated in
part $(ii).$
\end{proof}

\end{subsection}

\begin{subsection}{The domination lemma}\label{sec:domLem} 
The following key result reinforces our
motivation for considering differences (\ref{eq:monomialDiff}) of
reaction rates. Roughly speaking, it
shows that if, at time $t,$ the rate of a reaction $P'_0\to P_0$ dominates all the
other reaction rates, then the reaction vector $P'_0-P_0$ ``dictates''
the direction of the flow $\dot c(t).$
\begin{lemma}\label{lem:staysInside}
Let $(\mathcal N, K)=(\mathcal S,\mathcal C,\mathcal R, K)$ be a reaction system, let $P_0\to P'_0\in {\mathcal R}$ and let $\bf{v}$ be a vector such that $(P'_0-P_0)\cdot {\bf v}>0.$ Also let  $\mathcal U\subseteq \mathcal{SC}(\mathcal N)\backslash \{P_0\}.$
There exists a positive constant  $\mu$ such that if  for some $(t, {\bf
x})\in\mathbb R_{\ge 0}\times \mathbb R_{\ge 0}^n$ we have $K_{P_0\to
P'_0}(t,{\bf x})>0$ and 
$$K_{P_0\to P'_0}(t,{\bf x})  > \mu K_{P\to P'}(t,{\bf x})  \text{ for all } P\to P'\in\mathcal R \text{ with } P\in \mathcal U$$
\noindent then 
$$\Bigg(K_{P_0\to P'_0}(t,{\bf x})(P'_0-P_0)+\sum_{P\to P', P\in
    \mathcal U }K_{P\to P'}(t,{\bf x}) (P'-P)\Bigg)\cdot{\bf{v}} > 0.$$
\end{lemma}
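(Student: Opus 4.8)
The plan is to reduce the vector inequality to a single scalar estimate by projecting every reaction vector onto ${\bf v}$, and then to exploit the domination hypothesis to control the finitely many terms that push in the wrong direction. Write $a = K_{P_0\to P_0'}(t,{\bf x})$ and, for each reaction $P\to P'$ with $P\in\mathcal U$, write $b_{P\to P'} = K_{P\to P'}(t,{\bf x})\ge 0$. Introduce the scalars $d_0 = (P_0'-P_0)\cdot{\bf v} > 0$ and $d_{P\to P'} = (P'-P)\cdot{\bf v}$. These projected reaction vectors are constants determined only by $\mathcal N$, $\mathcal U$ and ${\bf v}$ --- crucially not by $(t,{\bf x})$ --- and there are finitely many of them. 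The quantity whose positivity we must establish is
$$a\,d_0 + \sum_{P\to P',\,P\in\mathcal U} b_{P\to P'}\,d_{P\to P'}.$$

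First I would split the sum according to the sign of $d_{P\to P'}$. The terms with $d_{P\to P'}\ge 0$ contribute nonnegatively (since $b_{P\to P'}\ge 0$) and may be dropped from a lower bound. For each term with $d_{P\to P'}< 0$, the hypothesis $a > \mu\,b_{P\to P'}$ gives $b_{P\to P'} < a/\mu$; multiplying by the negative number $d_{P\to P'}$ reverses the inequality to $b_{P\to P'}\,d_{P\to P'} \ge (a/\mu)\,d_{P\to P'}$. Summing these bounds yields
$$\sum_{P\to P',\,P\in\mathcal U} b_{P\to P'}\,d_{P\to P'} \ \ge\ \frac{a}{\mu}\sum_{\substack{P\to P',\,P\in\mathcal U\\ d_{P\to P'}<0}} d_{P\to P'} \ =\ -\frac{a}{\mu}\,D,$$
where $D = \sum_{d_{P\to P'}<0} |d_{P\to P'}| \ge 0$ is a finite constant determined by $\mathcal N$, $\mathcal U$ and ${\bf v}$.

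Combining these estimates gives the lower bound $a\,d_0 - (a/\mu)\,D = a\,(d_0 - D/\mu)$. Since $a>0$ and $d_0 > 0$, it suffices to choose $\mu$ large enough that $d_0 - D/\mu > 0$, e.g. any $\mu > D/d_0$ (any positive $\mu$ works if $D=0$); this choice depends only on $d_0$ and $D$, hence not on $(t,{\bf x})$, as required. With such a $\mu$ the displayed dot product is at least $a\,(d_0 - D/\mu) > 0$, which is exactly the claim.

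I do not expect a genuine obstacle here: the argument is a one-line uniform estimate once the right quantities are named. The only point that needs care is to verify that $\mu$ can be chosen independently of the state $(t,{\bf x})$. This is precisely why it matters that the projected reaction vectors $d_{P\to P'}$, and hence $D$ and $d_0$, are geometric constants of $(\mathcal N,{\bf v})$ rather than functions of the dynamics, and why the domination hypothesis is phrased as the ratio bound $a>\mu\,b_{P\to P'}$ rather than an absolute one: it is the ratio bound that lets the single dominating rate $a$ absorb the adverse contributions with a state-independent constant.
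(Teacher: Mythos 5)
Your proof is correct and follows essentially the same route as the paper's: bound the adverse contributions of the dominated reactions by the dominating rate times a state-independent geometric constant, then choose $\mu$ large enough. The only (cosmetic) difference is that you use the exact projections $|(P'-P)\cdot{\bf v}|$ summed over the reactions with negative projection, whereas the paper bounds these via Cauchy--Schwarz by $\vectornorm{{\bf v}}\sum\vectornorm{P'-P}$ and writes down the resulting explicit $\mu$; your constant is marginally sharper but the argument is the same.
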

\begin{proof} We take 
$$\mu = \frac{\vectornorm{{\bf v}}\displaystyle\sum_{P\to P'\in\mathcal R}\vectornorm{P-P'}}{(P'_0-P_0)\cdot \bf{v}}$$
\noindent and we have 
\begin{eqnarray*} \label{case2}
&&\Bigg(K_{P'_0\to P_0}(t,{\bf x})(P'_0-P_0)+\sum_{P\to P', P\in \mathcal U }K_{P\to P'}(t,{\bf x}) (P'-P)\Bigg)\cdot{\bf{v}} \\\nonumber
&&> K_{P_0\to P'_0}(t,{\bf x})(P'_0-P_0)\cdot {\bf v} - \sum_{P\to P', P\in \mathcal U} \Bigg((1/\mu) K_{P_0\to P'_0}(t,{\bf x})\vectornorm{{\bf v}}\vectornorm{P-P'} \Bigg ) \\
&&\ge\left((P'_0-P_0)\cdot {\bf{v}} - (1/\mu) \vectornorm{{\bf v}}\sum_{P\to P'\in \mathcal R}\vectornorm{P'-P}\right)K_{P_0\to P'_0}(t,{\bf x}) = 0,
\end{eqnarray*}
\noindent where the first inequality was obtained using the Cauchy-Schwarz inequality.
\end{proof}

\begin{remark}\label{rem:AfromMu}
 If $(\mathcal S, \mathcal C, \mathcal R, \Psi,\kappa, a)$ is a
 2D-reduced planar mass-action system 
and if $\kappa(t)\in(\eta,1/\eta)^{\mathcal R}$ for some $t\ge 0$ then 
$K_{P\to P'}(t,{\bf x})=\kappa_{P\to P'}(t)(\Psi {\bf x})^{\Psi P+a},$ and therefore
$(\Psi {\bf x})^{\Psi P_0+a} > (\mu/\eta^2)(\Psi {\bf x})^{\Psi P+a}$ implies 
$K_{P_0\to P'_0}(t,{\bf x}) > \mu K_{P\to P'}(t,{\bf x})$, which
is exactly the condition needed in Lemma \ref{lem:staysInside}.  Or,
using the notation in (\ref{eq:lambdaFunction}) and letting  $(\alpha,-\beta)=D(P_0-P):$
\begin{equation}\label{eq:AfromMu}
\Lambda^{\mu/\eta^2}_{\alpha,\beta} ({\bf x}) > 0 \text{ implies }
K_{P_0\to P'_0}(t,{\bf x}) > \mu K_{P\to P'}(t,{\bf x}).
\end{equation} 
\end{remark}
\end{subsection}

\begin{subsection}{Geometric constructions in the phase plane}\label{sec:constr} 
Our strategy for proving that 
a trajectory $T(c_0)$ of a certain reaction system is persistent
relies on building a convex set $\mathcal L^+\subset \mathbb R^2_{>0}$ that contains
$T(c_0)$ and stays away from $\partial \mathbb R^2_{\ge 0}.$ As in \cite{craciun_nazarov_pantea},
we partition the phase plane into subsets where one reaction rate dominates all the others, and
therefore, by Lemma \ref{lem:staysInside}, its corresponding reaction vector dictates the
direction of the vector field. The set $\mathcal L^+$ is constructed such that, on each 
subset of the partition, the dominating reaction vector (and therefore, by Lemma \ref{lem:staysInside},
the vector field), points towards the interior of  $\mathcal L^+$. This is the rather simple 
idea behind the proof of Theorem \ref{thm:proj}, but the technical details involved are quite
delicate. We start with the construction of the set $\mathcal L^+$, which is discussed next.  

For any $s\in\{1,\ldots, p\},$ let $\mathcal N_s=(\mathcal S,\mathcal
C_s,\mathcal R_s)$ be a lower-endotactic two-species  
reaction network and let 
$(\mathcal N, K)=\bigcup_{s=1}^p(\mathcal N_s, \kappa_s, \Psi, a_s)$ 
be a 2D-reduced mass-action
system. 
We also let $\eta<1$ be a positive constant.  
Let $D$ denote the  least common denominator of all nonzero elements
of $\Psi.$ For each $s\in\{1,\ldots, p\}$ let 
$$\{r^s_1,\ldots,r^s_{e(s)}\}=\big\{\alpha/\beta\mid (\alpha,-\beta)\in
\{D(P-P')\mid P,P' \in {\mathcal{SC}}(\mathcal N_s),\alpha\beta >0\}\big\};$$
we assume that  $r_1^s<\ldots <r_{e(s)}^s$ and define the set
\begin{equation}\label{eq:ris}
{\bf V} = \{(1,\tilde r_1),\ldots,(1,\tilde r_e)\}=\bigcup_{s=1}^p
\{(1,r^s_i)\mid i\in\{1,\ldots,e(s)\}\}
\end{equation}
where we take $\tilde r_1 < \ldots < \tilde r_e.$ 
Let $\{\bf i,j\}$ be the standard
basis of the cartesian plane. 
Since $\mathcal N_s$ is endotactic, for  each
$s\in\{1,\ldots, p\}$ and for all vectors $\bf n\in{\bf V}\cup\{\bf i,
\bf j\},$ there exists a reaction 
\begin{equation}\label{eq:importantReaction}
P_{s,\bf n}\to P'_{s,\bf n}\in \mathcal R_s
\text{ such that } 
P_{s,\bf n}\in esupp^{\bf n}(\mathcal N_s) 
\text{ and } 
P'_{s,\bf n}\in esupp^{\bf n}(\mathcal N_s)_{>0}. 
\end{equation}

Note that there might exist multiple reactions as in
(\ref{eq:importantReaction}), out of which $P_{s,\bf n}\to P'_{s,\bf
  n}$ is chosen and fixed
for the remaining of this paper.

If $\mu_{{\bf n},s}$ denotes the  constant from Lemma
\ref{lem:staysInside} that corresponds to reaction 
$P_{s,\bf n}\to P'_{s,\bf n}$ $\bf v = \bf n$ and 
$\mathcal U=\mathcal{SC(N}_s)\cap esupp^{\bf n}(\mathcal N_s)_{>0}$ we define
\begin{equation}\label{eq:delta} 
\mu = \max\big\{\mu_{{\bf n},s}\mid {\bf n}\in{\bf V}\cup\{{\bf i, j}\},
s\in\{1,\ldots, p\}\big\}.
\end{equation}

We also let 
$$\mathcal D = \bigcup_{s=1}^p\{D(P-P')\mid P, P'\in \mathcal{SC(N}_s)\},$$
and finally, inspired by Remark \ref{rem:AfromMu}, we denote $A = \mu/\eta^2.$  

Let $M>1$ be a fixed number. 
We choose $0<\delta<1$ and $0<\xi<1$ to satisfy the following properties: 

\begin{myindentpar}{.3cm}
\begin{tabular*}{\textwidth}{ l @{\extracolsep{4cm}} l}
\hspace{-.2cm}(P1) & $\delta<\displaystyle\frac{1}{2}\min\{C_{\alpha,\beta}^A,\gamma_{\alpha,i}^A \mid (\alpha,-\beta)\in{\mathcal D},\alpha\beta >0\}$,\\ 
&$\displaystyle\frac{1}{\delta} >\frac{3}{2}\max\{C_{\alpha,\beta}^A,\gamma_{\alpha,i}^A \mid
(\alpha,-\beta)\in {\mathcal D},\alpha\beta >0\}$\\
\end{tabular*}
\noindent where $C_{\alpha,\beta}^A$ and $\gamma_{\alpha,i}^A$ are defined in Lemma \ref{lem:powerFunction};\\
(P2) all pairwise intersections from the strictly positive quadrant of the  $2e$ curves 
$y = \delta x^{\tau(\tilde r_{i})},\ y = (1/\delta) x^{\tau(\tilde r_{i})},$ $i\in\{1,\ldots, e\},$ lie in $(\xi,\infty)^2$;\\
(P3) the square $[0,\xi]^2$ lies below the curves $\Lambda_{\alpha,\beta}^A(x,y)=0$ for all 
$(\alpha, -\beta) \in {\mathcal D} \text{ with } \alpha\beta\le 0;$ \\
(P4) for all $(\alpha,-\beta)\in \mathcal D$ such that $\alpha\beta>0$ we
have $\xi < M^A_{\alpha,\beta}$ (recall that $M^A_{\alpha,\beta}$ is such that
$y_{\alpha,\beta}^A:(0,M_{\alpha,\beta}^A)$ admits a Puiseux series
representation), and for all $x\in (0,\xi]$ we have
$$\frac{1}{2}C^A_{\alpha,\beta}x^{\tau(\alpha/\beta)}<y^A_{\alpha,\beta}(x)<\frac{3}{2}C^A_{\alpha,\beta}x^{\tau(\alpha/\beta)}.$$\\
\end{myindentpar}

\begin{figure}
\hspace{-1cm}
\begin{tikzpicture}[>=stealth, scale =.41]

\begin{scope}
\clip (0,0) rectangle + (32,32);
\begin{scope}
\clip (.5,32)--(.5,13.25)--(2,8)--(5,4)--(9,2)--(13.5,.5)--(32,.5)--(32,32);
\draw[inner color = blue!5,outer color=blue!5,draw=white] (-40,-40) rectangle +(100,100);
\end{scope}
\end{scope}

\draw (0,0)--(32,0);
\draw (0,0)--(0,32);

\node at (16.8,-.5) {$(\xi,0)$};
\node at (-.9,16.2) {$(0,\xi)$};
\node[fill=white] at (0,13.25) {$A_1$};
\node[fill=white] at (13.4,0) {$A_{e+1}$};
\node at (-.5,15){$A_0$};
\node at (15,-.5){$A_{e+2}$};

\fill[black] (0,15) circle (.12);
\fill[black] (15,0) circle (.12);

\begin{scope}
\clip (0,0) rectangle + (32,32);

\draw[color=red, dashed, very thick] plot[domain = 3:32] (\x,300/\x);
\draw[color=red, dashed,very thick] plot[domain = 3:32] (\x,400/\x - \x/2);
\draw[color=red, dashed,very thick] plot[domain = 3:32] (500/\x - \x/1.5,\x);

\draw[color=gray, very thin] plot[domain = 0:32] (\x,\x^2/170);
\draw[color=gray, very thin] plot[domain = 0:32] (\x,\x^2/60);

\draw[color=gray, very thin] plot[domain = 0:32] (\x^2/50,\x);
\draw[color=gray, very thin] plot[domain = 0:32] (\x^2/190,\x);

\draw[color=gray, very thin] plot[domain = 0:23] (\x^3/520,1.1*\x^2/16);
\draw[color=gray, very thin] plot[domain = 0:23] (\x^3/270,\x^2/16);

\draw[color=gray,very thin, dashed] plot[domain = 0:32] (\x,\x^2/100);
\draw[color=gray,very thin, dashed] plot[domain = 0:32] (\x^2/100,\x);
\draw[color=gray,very thin, dashed] plot[domain = 0:23] (\x^3/400,\x^2/16);

\draw[color=gray, very thin] (0,15)--(.5,13.25);
\draw[color=gray, very thin] (15,0)--(13.5,.5);

\begin{scope}
\clip (0,0) rectangle + (16,16);
\draw[thick] plot[domain = 0:32] (\x,\x^2/170);
\draw[thick] plot[domain = 0:32] (\x,\x^2/60);

\draw[thick] plot[domain = 0:32] (\x^2/50,\x);
\draw[thick] plot[domain = 0:32] (\x^2/190,\x);

\draw[thick] plot[domain = 0:23] (\x^3/520,1.1*\x^2/16);
\draw[thick] plot[domain = 0:23] (\x^3/270,\x^2/16);

\draw[color=red,dashed,very thick] plot[domain = 0:32] (\x,\x^2/100);
\draw[color=red,dashed,very thick] plot[domain = 0:32] (\x^2/100,\x);
\draw[color=red,dashed,very thick] plot[domain = 0:23] (\x^3/400,\x^2/16);
\end{scope}

\node[rotate = 76, fill=blue!5, text = black] at (2.1,20) {$y=(1/\delta) x^{\tau(\tilde r_1)}$};
\node[rotate = 61, fill=blue!5, text = black] at (3.4,13.2) {$y=\delta x^{\tau(\tilde r_1)}$};

\node[rotate = 29, fill=blue!5, text = black] at (19,6) {$y=(1/\delta) x^{\tau(\tilde r_e)}$};
\node[rotate = 11, fill=blue!5, text = black] at (20,2.5) {$y=\delta x^{\tau(\tilde r_1)}$};

\node[rotate = 37, fill=blue!5, text = black] at (12.9,14.4) {$y=\delta x^{\tau(\tilde r_2)}$};
\node[rotate = 47, fill=blue!5, text = black] at (8.5,14) {$\alpha/\beta = \tilde r_2$};
\node[rotate = 56, fill=blue!5, text = black] at (5.4,13.5) {$y=(1/\delta) x^{\tau(\tilde r_2)}$};
\node[rotate = 77, fill=blue!5, text = black] at (1.9,14) {$\alpha/\beta = \tilde r_1$};
\node[rotate = 15, fill=blue!5, text = black] at (14,2) {$\alpha/\beta = \tilde r_e$};

\node[rotate = -50, fill=blue!5, text = black] at (24,4.7) {$\beta = 0$};
\node[rotate = -21, fill=blue!5, text = black] at (29,10.3) {$\alpha\beta < 0$};
\node[rotate = -16, fill=blue!5, text = black] at (30,13) {$\alpha = 0$};

\node[fill=white] at (1.4,8) {$A_2$};
\node at (4.4,4) {$A_3$};
\node[fill=white] at (8.7,1.6) {$A_e$};

\draw [gray](16,0)--(16,16);
\draw [gray](0,16)--(16,16);
 
\draw [blue, very thick](.5,13.25)--(.5,32);
\draw [blue, very thick](.5,13.25)--(2,8);
\draw [blue, very thick](2,8)--(5,4);

\draw [blue, very thick, dashed](5,4)--(9,2);

\draw [blue,very thick](9,2)--(13.5,.5);
\draw [blue,very thick](13.5,.5)--(32,.5);

\fill[blue] (.5,13.25) circle (.12);
\fill[blue] (2,8) circle (.12);
\fill[blue] (5,4) circle (.12);
\fill[blue] (9,2) circle (.12);
\fill[blue] (13.5,.5) circle (.12);
\fill[blue] (29,.5) circle (.12);
\fill[blue] (.5,29) circle (.12);

\node at (1.7,29){$(d,M)$};
\node at (29, 1){$(M,d)$};

\node[rotate = 60, fill=white, text=blue] at (7,3) {\normalsize$\approx$};
\node at (30,30){\huge$\mathcal L^+$};
\end{scope}
\end{tikzpicture} 
\caption{Construction of $\mathcal L^+.$ Each dashed line represents a curve $\Lambda^A_{\alpha,\beta}=0$ in the positive quadrant and is labeled with a relation that the corresponding $\alpha$ and $\beta$ satisfy. The curves $y=\delta x^{\tau(\tilde r_i)}$ and $y=(1/\delta) x^{\tau(\tilde r_i)}$ are depicted using solid lines. 
The highlighted area represents the set $\mathcal L^+.$}\label{fig:bigPicture}
\end{figure}
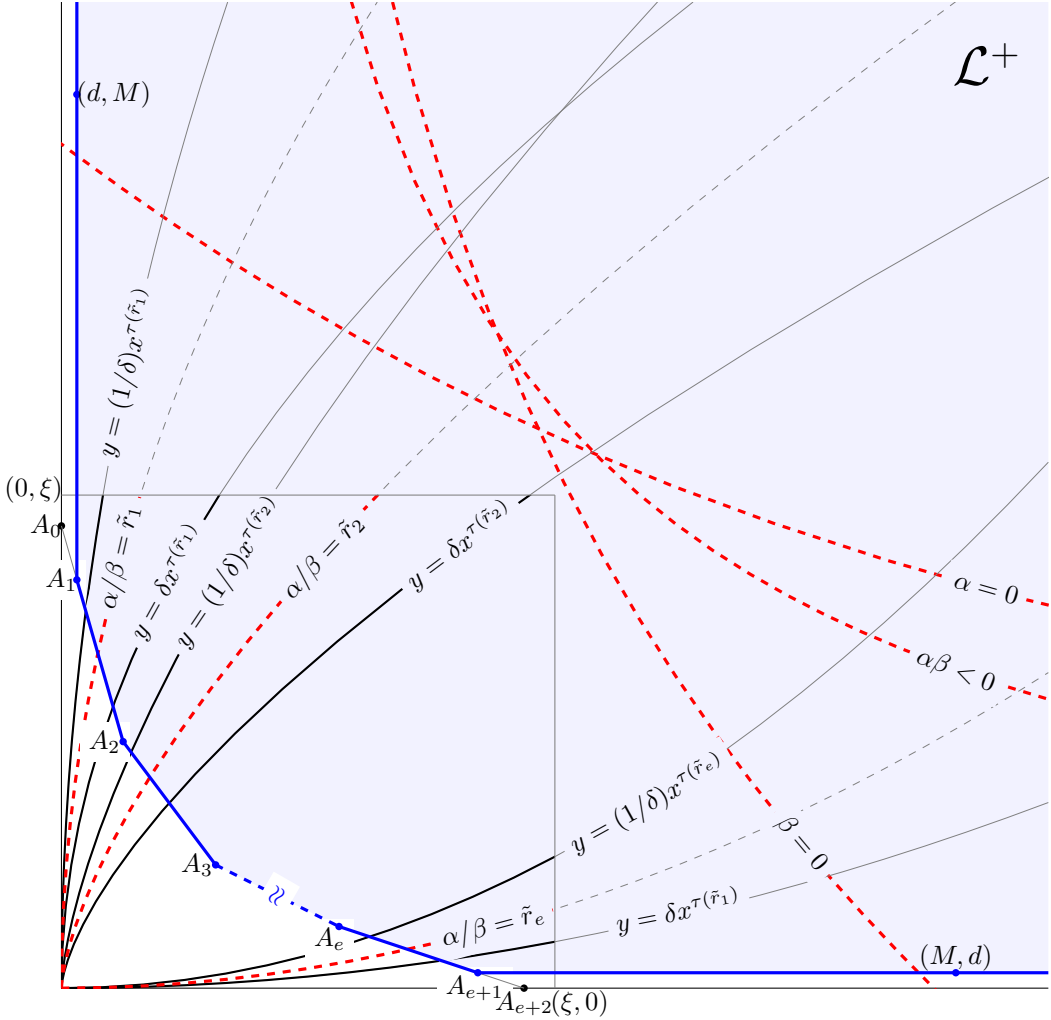

Clearly, $\xi$ can be chosen small enough so that (P2) and (P3)
are satisfied (see Figure \ref{fig:bigPicture}). The existence of $\xi$ that also satisfies (P4) is a consequence of Lemma \ref{lem:powerFunction}. Since $\tau$ is a strictly increasing function,  condition (P2)  implies that for $x\in (0,\xi]$ we have 
$$(1/\delta) x^{\tau(\tilde r_{i+1})} < \delta x^{\tau(\tilde r_i)} \text{ for all } i\in\{1,\ldots, e-1\}.$$
Moreover, for $x<\xi,$ (P1) and (P4) imply that for $\alpha\beta>0$
the curve $\{(x,y)\in\mathbb R_{>0}^{2}\mid
\Lambda^A_{\alpha,\beta}(x,y) = 0\}$ lies between the curves $\delta
x^{\tau(\alpha/\beta)}$ and   $(1/\delta) x^{\tau(\alpha/\beta)}$
(see Figure \ref{fig:bigPicture}).

Now, for the actual construction of $\mathcal L^+,$ first assume that
${\bf V}\neq\O$ and start with a point $A_0\in
\{0\}\times (0,\xi]$ on the $y$ axis. We choose a point $A_2$ between the curves 
$y = (1/\delta) x^{\tau(\tilde r_2)}$  and  $y = \delta x^{\tau(\tilde
  r_1)}$ such that the slope of the line $A_0A_2$ is $-1/\tilde
r_{1}.$ Inductively, choose points $A_{i+1},\ i\in\{2,\ldots, e-1\}$ such that 
\begin{eqnarray*}
&& (i)\ \text{ the point }A_{i+1} \text{ lies between the curves }y = (1/\delta) x^{\tau(\tilde r_{i+1})}  \text{ and } y = \delta x^{\tau(\tilde r_{i})}\\
&& (ii)\text{ the line } A_iA_{i+1} \text{ has slope  } -1/\tilde r_{i}.
\end{eqnarray*}
Finally, $A_{e+2}$ is defined on the $x$ axis such that $A_eA_{e+2}$ has slope $-1/\tilde r_e.$

The polygonal line 
$[A_0A_2A_3\ldots A_{e-1}A_eA_{e+2}]$ is convex. 
We move  $A_0$ closer to the origin if necessary, such that all the
points $A_i$ defined above lie in the square $(0,\xi]^2.$  
  
If ${\bf V}=\O$ we let $A_0$ and $A_3$ be two points on the $y$ and $x$ axes, respectively, such that $A_0,\ A_3\in[0,\xi]^2$ and the slope of the line $A_0A_3$ is -1.

The last step of the construction consists of defining $d$ small enough such
that (P5) and (P6) below are satisfied:  

\begin{myindentpar}{.3cm}
(P5) If ${\bf V}\neq \O$ then the vertical half-line $\{d\}\times [0,\infty)$ intersects the segment $A_0A_{2}$ at a point $A_1$ above the curve $y=(1/\delta)x^{\tau(\tilde r_1)}$; 
also, the horizontal half-line $[0,\infty)\times \{d\}$ intersects the
segment $A_eA_{e+2}$ at a point $A_{e+1}$ below the curve $y=\delta
x^{\tau(\tilde r_e)}.$ 
If ${\bf V}= \O$ then $d$ is chosen such that $d<\xi/2,$ and the
intersection of the vertical  half-line $\{d\}\times [0,\infty)$ with the
segment $A_0A_{3}$ is denoted $A_1,$ whereas the intersection of the
horizontal half-line $[0,\infty)\times \{d\}$ with the segment
$A_0A_3$ is denoted $A_{2};$ 

\begin{tabular*}{\textwidth}{ l @{\extracolsep{4cm}} l}
\hspace{-.45cm}(P6) & $\Lambda^A_{\alpha,\beta}(d, M) > 0\text{ for } (\alpha,-\beta)\in {\mathcal D},\alpha<0, \beta\ge 0,$\\ 
& $\Lambda^A_{\alpha,\beta}(M, d) > 0\text{ for } (\alpha,-\beta)\in {\mathcal D},\alpha\ge 0, \beta < 0;$\\
\end{tabular*}
\end{myindentpar}

Recall that $M>1$ was fixed at the beginning of the construction. It
is easy to see that (\ref{eq:lambdaFunction}) implies that (P6) holds for $d$ small enough.

Let 
$$\mathcal L = \{A_1+(0,t)\mid t\ge 0\}\cup [A_1\ldots A_{e+1}]\cup \{A_{e+1}+(t,0)\mid t\ge 0\}$$ 
be made out of the polygonal line $[A_1\ldots A_e],$ (called, for
future reference, {\em the finite part of} $\mathcal L$)  completed with a
vertical and a horizontal half-lines (the union of which we call {\em
  the infinite part of} $\mathcal L$). Finally, we define $\mathcal L^+ = \mathrm{conv}(\mathcal L).$

To indicate the quantities that $\mathcal L^+$ depends on, we write 
$\mathcal L^+ = \mathcal L^+(\{\mathcal N_s\}_{1\le s\le p}, \Psi, \eta, M).$
The polygonal line $[A_0,\ldots, A_{e+2}]$ will also be useful later
in this paper. We denote it by 
$\overline{\mathcal L} = \overline{\mathcal L}(\{\mathcal N_s\}_{1\le s\le p},
\Psi, \eta)$ (note that $M$ is not required for $\overline{\mathcal L}$) 
and we let $\overline{\mathcal L}^+$ be the unbounded
part of $\mathbb R^2_{\ge 0}$ that is delimited by $\overline{\mathcal L}.$

\end{subsection}

\begin{subsection}{Persistence of 2D-reduced mass-action systems} 
The following theorem is the  main result of this section.

\begin{thm}\label{thm:proj}
Let 
$(\mathcal N, K)=\bigcup_{s=1}^p(\mathcal N_s,\kappa_s, \Psi, a_s)$ 
be a 2D-reduced mass-action system
where $\mathcal N_s=(\mathcal S, \mathcal C_s, \mathcal R_s)$
is lower-endotactic for all $s\in\{1,\ldots,
p\}$ and denote
$$f(t,{\bf x})=\sum_{s=1}^p \sum_{P\to P'\in\mathcal
      R_s}\kappa_{s,P\to P'}(t)(\Psi {\bf x})^{\Psi P+a_s}(P'-P).$$
Let $\eta\in (0,1)$ and  $M>1$ be real numbers. Then for any $t\ge 0$ such
that $\kappa_s(t)\in(\eta,1/\eta)^{\mathcal R_s}$ for all
$s\in\{1,\ldots, p\}$, and for any ${\bf
  x}\in {\mathcal L}(\{\mathcal N_s\}_{1\le s\le p}, \Psi, \eta, M)\cap
[0,M]^2$ we have
\begin{equation}\label{eq:thmstate}
{\bf n}\cdot f(t,{\bf x})\ge 0 \text{ for all } {\bf n}\in -N_{\mathcal
  L^+}(\bf x).
\end{equation}
\end{thm}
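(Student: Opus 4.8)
The plan is to read the conclusion as the sub-tangentiality (Nagumo) condition of Theorem \ref{thm:nagumo} for the convex set $\mathcal L^+$: since $\mathbf n\in -N_{\mathcal L^+}(\mathbf x)$ means exactly that $-\mathbf n\in N_{\mathcal L^+}(\mathbf x)$, the inequality $\mathbf n\cdot f\ge 0$ is equivalent to $(-\mathbf n)\cdot f\le 0$ for every outward normal. Because $N_{\mathcal L^+}(\mathbf x)$ is a convex polyhedral cone and $\mathbf n\mapsto \mathbf n\cdot f$ is linear, it suffices to verify the inequality on the extreme rays of $-N_{\mathcal L^+}(\mathbf x)$. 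For $\mathbf x\in\mathcal L\cap[0,M]^2$ these rays are read off from the construction of $\mathcal L$: on the vertical half-line the single inward normal is $\mathbf i$, on the horizontal half-line it is $\mathbf j$, and on the $i$-th finite edge (of slope $-1/\tilde r_i$) it is $(1,\tilde r_i)\in{\bf V}$; at a vertex the cone is generated by the two adjacent edge normals. Hence it is enough to treat each closed edge and each closed half-line segment separately, checking $\mathbf n\cdot f(t,\mathbf x)\ge 0$ for the corresponding $\mathbf n\in{\bf V}\cup\{\mathbf i,\mathbf j\}$. Finally, writing $f=\sum_{s=1}^p f_s$ with $f_s$ the contribution of $\mathcal N_s$, linearity reduces everything to proving $\mathbf n\cdot f_s(t,\mathbf x)\ge 0$ for each $s$.

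Fix such an $\mathbf n$ and an $s$, and let $P_0\to P_0'$ be the reaction $P_{s,\mathbf n}\to P'_{s,\mathbf n}$ selected in (\ref{eq:importantReaction}), so that $P_0\in esupp^{\mathbf n}(\mathcal N_s)$ and $(P_0'-P_0)\cdot\mathbf n>0$. I would then split the reactions of $\mathcal N_s$ according to their source. Sources lying on the line $esupp^{\mathbf n}(\mathcal N_s)$ yield $(P'-P)\cdot\mathbf n\ge 0$ by lower-endotacticity (Definition \ref{def:endo}, since no such reaction can point into $esupp^{\mathbf n}(\mathcal N_s)_{<0}$), so these contribute nonnegatively; sources whose reactions are all orthogonal to $\mathbf n$, or which lie strictly on the $<0$ side, contribute $0$. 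The remaining sources are those strictly inside, forming $\mathcal U=\mathcal{SC}(\mathcal N_s)\cap esupp^{\mathbf n}(\mathcal N_s)_{>0}$, and to these I apply the domination Lemma \ref{lem:staysInside} with $\mathbf v=\mathbf n$ and this $\mathcal U$: provided $K_{P_0\to P_0'}(t,\mathbf x)>0$ (guaranteed because $\mathbf x\in\mathcal L\subset\mathbb R^2_{>0}$ makes $\Psi\mathbf x$ strictly positive) and $K_{P_0\to P_0'}(t,\mathbf x)>\mu\,K_{P\to P'}(t,\mathbf x)$ for all $P\in\mathcal U$, the chosen reaction together with the inside reactions has a strictly positive $\mathbf n$-component. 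Summing the three groups gives $\mathbf n\cdot f_s(t,\mathbf x)\ge 0$. Note that the single $\mu$ from (\ref{eq:delta}) and $A=\mu/\eta^2$ were chosen to work uniformly over all $\mathbf n$ and all $s$.

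The crux is therefore to verify $K_{P_0\to P_0'}(t,\mathbf x)>\mu K_{P\to P'}(t,\mathbf x)$ for every $P\in\mathcal U$, which by Remark \ref{rem:AfromMu} amounts to $\Lambda^A_{\alpha,\beta}(\mathbf x)>0$ with $(\alpha,-\beta)=D(P_0-P)$. This is where the geometry of $\mathcal L$ relative to the curves $\Lambda^A_{\alpha,\beta}=0$ enters, and I expect it to be the main obstacle. I would argue by cases. When $\mathbf x$ lies on the $i$-th finite edge, it sits in the band between $y=\delta x^{\tau(\tilde r_i)}$ and $y=(1/\delta)x^{\tau(\tilde r_i)}$ (from the placement conditions (i)--(ii) of the vertices together with (P1), (P2), (P4)). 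Since $P\in\mathcal U$ forces $(\alpha,-\beta)\cdot\mathbf n<0$, one checks that for $\alpha\beta>0$ the ratio $\alpha/\beta$ falls on the appropriate side of $\tilde r_i$; as $\tau$ is strictly increasing (Lemma \ref{lem:powerFunction}) and $\Lambda^A_{\alpha,\beta}=0$ is asymptotic to $C^A_{\alpha,\beta}x^{\tau(\alpha/\beta)}$, the ordering of power curves places $\mathbf x$ on the side where $\Lambda^A_{\alpha,\beta}>0$. For $\alpha\beta\le 0$ the same sign condition selects the orientation (here $\alpha<0$, $\beta\ge 0$) for which (P3)---the square $[0,\xi]^2$ lying below $\Lambda^A_{\alpha,\beta}=0$---yields $\Lambda^A_{\alpha,\beta}(\mathbf x)>0$, the finite part of $\mathcal L$ being contained in $[0,\xi]^2$.

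Finally, on the vertical segment of $\mathcal L$ above $A_1$ (with $x=d$, up to $y=M$) one has $\mathbf n=\mathbf i$ and $\alpha=D(P_0-P)_x<0$, and on the horizontal segment one has $\mathbf n=\mathbf j$ and $\beta<0$; in each case the smallness of $d$ makes the factor $d^\alpha$ (resp.\ $d^\beta$) dominate, so (P6), combined with the single-branch property of $\Lambda^A_{\alpha,\beta}=0$ from Lemma \ref{lem:uniqueBranch} to propagate positivity across the whole segment, gives $\Lambda^A_{\alpha,\beta}(\mathbf x)>0$ up to height, respectively width, $M$. The delicate point throughout is the sign bookkeeping: one must confirm that the inward-normal condition $(\alpha,-\beta)\cdot\mathbf n<0$ always selects exactly the orientation of $P_0-P$ on which the construction (P1)--(P6) guarantees $\Lambda^A_{\alpha,\beta}>0$, simultaneously for every inside source and every subnetwork.
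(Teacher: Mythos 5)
Your proposal follows the paper's own proof essentially step for step: reduce to the generators of $-N_{\mathcal L^+}({\bf x})$ in ${\bf V}\cup\{{\bf i},{\bf j}\}$, treat each subnetwork $\mathcal N_s$ separately, split reactions by the position of their source relative to $esupp^{\bf n}(\mathcal N_s)$, invoke Lemma \ref{lem:staysInside} with the distinguished reaction from (\ref{eq:importantReaction}), and verify $\Lambda^A_{\alpha,\beta}({\bf x})>0$ by the same case analysis on the finite edges (via (P1)--(P4) and the ordering of the power curves) and the unbounded sides (via (P6) and monotonicity in the free coordinate). The argument is correct and matches the paper's route, with only minor looseness in the sign bookkeeping that the paper carries out explicitly in its Regions I--III.
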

\begin{proof} 
The cone $ -N_{\mathcal L^+}({\bf x})$ is
degenerate unless ${\bf x}$ is a vertex of $\mathcal L$ and its
generators belong to $\bf V \cup \{\bf i,j\},$ where $\bf V$ is the set of
vectors defined in (\ref{eq:ris}). It then suffices to show that 
for any $s\in\{1,\ldots, p\}$ and for any 
${\bf n}\in -N_{\mathcal L^+}({\bf x})\cap (\bf V \cup \{i,j\})$
we have
\begin{equation}\label{eq:newMainIneq}
\Bigg(\sum_{P\to P'\in \mathcal R_{s,\bf n}} K_{s, P\to P'}(t, {\bf x})(P'-P)\Bigg)\cdot {\bf n}\ge 0
\end{equation}
for all $s\in\{1,\ldots, p\},$ where 
$K_{s,P\to P'}(t,{\bf x})=\kappa_{s,P\to P'}(t)(\Psi {\bf
  x})^{\Psi P+a_s}$ is the rate of the reaction $P\to P'\in\mathcal
R_s$ at time $t.$
We fix $s\in\{1,\ldots, p\}$ and recall that 
$\mathcal R_{s,\bf n} = \{P\to P'\in {\mathcal R}_s\mid (P'-P)\cdot{\bf
  n}\neq 0\}$ denotes the $\bf n$-essential  subnetwork of $\mathcal R_s.$
 The inequality (\ref{eq:newMainIneq})
is trivially true if $\mathcal R_{s,\bf n}=\O.$ Otherwise, recall the 
reaction $P_{s,\bf n}\to P'_{s,\bf n}\in \mathcal
R_{s,\bf n}$ from (\ref{eq:importantReaction}). 
We rewrite the left hand side of (\ref{eq:newMainIneq}) by separating the reactions with source on  $esupp^{\bf n}(\mathcal N_s)$ and emphasizing the reaction $P_{s,\bf n}\to P'_{s,\bf n}:$

\begin{eqnarray}\label{eq:cdot}\nonumber
&&\Bigg(\sum_{P\to P'\in \mathcal R_{s,\bf n}}K_{s,P\to P'}(t,{\bf x})
  (P'-P)\Bigg )\cdot {\bf n} = \Bigg(\sum_{\substack{{\{P\to P'\in \mathcal
        R_{s,\bf n}}|\\{P\in esupp^{{\bf n}}(\mathcal N_s)}\\{P\to P' \neq P_{s,\bf n}\to P'_{s,\bf n}
      \}}}}
K_{s,P\to P'}(t,{\bf x})(P'-P)+\\\nonumber
&&+K_{s,P_{s,\bf n}\to P'_{s,\bf n}}(t,{\bf x}) (P'_{s,\bf n}-P_{s,\bf n})
+\sum_{\substack{{\{P\to P'\in\mathcal N_{s,\bf n}}|\\ {P\notin esupp^{{\bf n}}(\mathcal N_s)\}}}}K_{s,P\to P'}(t,{\bf x}) (P'-P)\Bigg)\cdot{\bf n}.\\\nonumber
\end{eqnarray}
Since all source complexes of $\mathcal R_{s,\bf n}$ lie in
$esupp^{{\bf n}}({\mathcal N}_s)_{\ge 0}$, the reaction vector
$P'-P$ with source $P\in esupp^{{\bf n}}(\mathcal N_s)$ satisfies $(P'-P) \cdot  {\bf n}\geq 0.$ 
It is therefore enough to show that
\begin{equation}\label{eq:reduced}
\Bigg(K_{s,P_{s,\bf n}\to P'_{s,\bf n}}(t,{\bf x}) (P'_{s,\bf n}-P_{\bf
    n})+\sum_{\substack{{\{P\to P'\in\mathcal R_{s,\bf n}}|\\ {P\notin
        esupp^{{\bf n}}(\mathcal N_s)\}}}}K_{s,P\to P'}(t,{\bf x}) (P'-P)\Bigg)\cdot {\bf n} \geq 0
\end{equation}
in order to verify (\ref{eq:newMainIneq}). In turn,
(\ref{eq:reduced}) will follow from Lemma \ref{lem:staysInside} with 
$\mathcal U = \mathcal{SC(N}_s)\cap esupp^{{\bf n}}(\mathcal R_s)_{>0}$ 
and the fact that 
\begin{equation}\label{eq:yaineq}
K_{P_{s,\bf n}\to P'_{s,\bf n}}(t,{\bf x}) > \mu K_{P\to P'}(t,{\bf x})
\end{equation}
for all $P\to P'\in{\mathcal R}_s$ with $P\in \mathcal U$ (recall
$\mu$ from (\ref{eq:delta})). Therefore showing inequality (\ref{eq:yaineq}) 
will complete the proof of the theorem.

As noted in Remark \ref{rem:AfromMu}, (\ref{eq:yaineq}) is implied by 
\begin{equation}\label{eq:ineq}
\Lambda^A_{\alpha,\beta}({\bf x}) > 0,    
\end{equation}
where $(\alpha,-\beta)=D(P_{s,\bf n}-P)$ and $A=\mu/\eta^2.$
To verify (\ref{eq:ineq}), we consider different cases,
according on the location of ${\bf x}=(x,y)$ on $\mathcal L.$ First suppose that 
${\bf x}$ lies on the line segment $[A_iA_{i+1}]$ for some
$i\in\{1,\ldots, e\}.$ Then (see Figure \ref{fig:bigPicture}), if
$\tilde r_{i'}<\tilde r_i<\tilde r_{i''}$, or
equivalently, if $i'<i<i''$, we have 
\begin{equation}\label{eq:order}
(1/\delta)x^{\tau(\tilde r_{i''})} < y < \delta x^{\tau(\tilde r_{i'})}.
\end{equation}
Depending on the sign combination of $\alpha$ and $\beta,$ the source
complex $P$ may belong to one of the three shaded regions in Figure \ref{fig:relPos}(a).

\begin{figure}
\begin{center}
\begin{tabular}{cc}
\begin{tikzpicture}[>=stealth, scale =.24]
\scriptsize
\begin{scope}
\clip (0,0) rectangle+(16,16);

\begin{scope}[shift = {(.5,-.18)}] 
\clip (7,6)--(16,6)--(16,0)--(14,0);
\shadedraw[inner color = blue!40!white,outer color=white,draw=white] (-9,-8) rectangle +(30,30);
\end{scope}

\begin{scope}[shift = {(-.2,.45)}] 
\clip (7,6)--(0,12)--(0,16)--(7,16);
\shadedraw[inner color = blue!35!white,outer color=white,draw=white] (-9,-8) rectangle +(30,30);
\end{scope}

\begin{scope}
\clip (7,6) rectangle + (10,10);
\shadedraw[inner color = blue!25!white,outer color=white,draw=white] (-9,-8) rectangle +(30,30);
\end{scope}

\node[rotate = -40] at (2.6,8.7) {$esupp^{\bf n}(\mathcal{N}_s)$};
\node at (6.5,5.5) {$P_{s,\bf n}$};
\fill[black] (7,6) circle (.12);
\node at (15.5,.5) {$I$};
\node at (15.5,15.5) {$II$};
\node at (6,15.5) {$III$};

\draw(0,12)--(14,0);
\draw(7,16)--(7,6)--(16,6);
\draw[gray](0,0)--(0,16)--(16,16)--(16,0)--(0,0); 
\draw[gray](0,0)--(0,16)--(16,16)--(16,0)--(0,0); 
\end{scope}
\normalsize
\node at (-2,15) {$(a)$};
\end{tikzpicture} 
&\hspace{2.5cm}
\begin{tikzpicture}[>=stealth, scale =.24]

\begin{scope}
\clip (0,0) rectangle+(16,16);
\scriptsize
\begin{scope}[shift = {(.2,0)}] 
\clip (5,8) rectangle + (11,8);
\shadedraw[inner color = blue!40!white,outer color=white,draw=white] (-9,-8) rectangle +(30,30);
\end{scope}

\begin{scope}[shift = {(.2,-.2)}] 
\clip (5,0) rectangle + (11,8);
\shadedraw[inner color = blue!35!white,outer color=white,draw=white] (-9,-8) rectangle +(30,30);
\end{scope}

\node[rotate = -90] at (4.2,12.7) {$esupp^{\bf n}(\mathcal{N}_s)$};
\node at (4.2,8) {$P_{s,\bf i}$};
\fill[black] (5,8) circle (.12);

\node at (15.5,.5) {$I$};
\node at (15.5,15.5) {$II$};

\draw(5,0)--(5,16);
\draw(5,8)--(16,8);
\draw[gray](0,0)--(0,16)--(16,16)--(16,0)--(0,0); 
\draw[gray](0,0)--(0,16)--(16,16)--(16,0)--(0,0); 
\end{scope}
\normalsize
\node at (-2,15) {$(b)$};
\end{tikzpicture} 

\end{tabular}
\end{center}
\caption{Positions of a source complex $P$ relative to $P_{s,\bf n}$}\label{fig:relPos}
\end{figure}
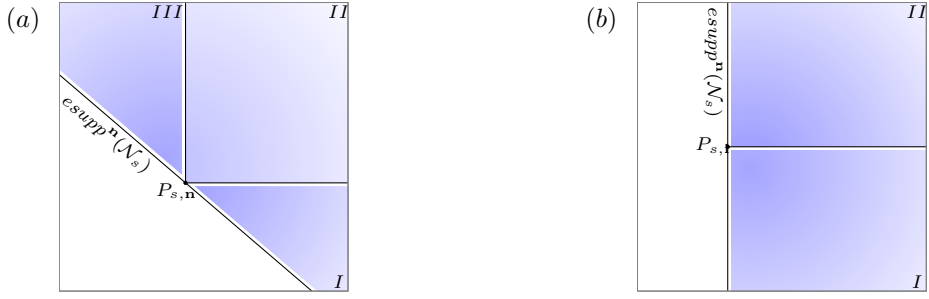

\noindent {\em Region I.} Here $\alpha<0$ and $\beta<0$; (\ref{eq:ineq}) is equivalent to $y>y^A_{\alpha,\beta}(x).$ 
The set 
$$\{r^s_j\mid j\in\{1,\ldots, e(s)\} \text{ with } r^s_j>\tilde r_i\}$$ 
is nonempty, since $\alpha/\beta$ is one of its elements. If
$r^s_{j_0}$ denotes the minimum of this set, then $r^s_{j_0}\le \alpha/\beta$ and it follows that 
$$y > (1/\delta) x^{\tau(r^s_{j_0})} > (3/2)C^A_{\alpha,\beta}x^{\tau(\alpha/\beta)} > y^A_{\alpha,\beta}(x).$$
\noindent The first inequality is implied by (\ref{eq:order}), since $r_{j_0}^s>\tilde{r}_i$. The
second inequality holds because of (P1), because $x<\xi<1$ 
and because $\tau$ is increasing. The last inequality
is a consequence of  (P4). Note that the calculation above corresponds
to $\Delta \neq 0$, but the same argument works if $\Delta = 0$ with
$C^A_{\alpha,\beta}$ replaced by
 $\gamma^A_{\alpha}=\displaystyle\max_{i}\gamma^A_{\alpha,i}.$

\noindent {\em Region II.} We have  $\alpha \le 0$ and $\beta \ge 0.$
From (P3) we know that ${\bf x}\in [0,\xi]^2$ is below the curve
$\Lambda^A_{\alpha,\beta}=0,$ and so
$\Lambda^A_{\alpha,\beta}({\bf x})>0.$

\noindent {\em Region III.} This case is similar to Region $I.$

Finally, suppose that ${\bf x}$ lies on one of the two unbounded sides
of $\mathcal L$, for instance, on the vertical side. Then $\alpha < 0$ and there are two regions for $P$, according to the sign of $\beta$
(see Figure \ref{fig:relPos}(b)). For region $I$ $(\beta<0)$, the
proof is the same as in the case of region $I$ from Figure
\ref{fig:relPos}(a). For region $II$ we have $\beta \ge 0$ and $\alpha
< 0$ and, since $y\le M,$ we have  
$$\Lambda^A_{\alpha,\beta}({\bf x})=\Lambda^A_{\alpha,\beta}({d,y})\ge \Lambda^A_{\alpha,\beta}(d,M)>0$$
from (P6).
\end{proof}

The following corollary follows from
Theorem \ref{thm:proj}  using Nagumo's Theorem \ref{thm:nagumo} 
and implies that bounded trajectories of
2D-reduced mass-action systems are persistent. 
\begin{cor}\label{cor:proj}
Let $\bigcup_{s=1}^p(\mathcal N_s,\kappa_s,\Psi, a_s)$ be a 2D-reduced 
mass-action system 
where $\mathcal N_s=(\mathcal S,\mathcal C_s,\mathcal R_s)$ are
lower-endotactic networks for all $s\in\{1,\ldots, p\}$, 
and suppose that
$\kappa_s(t)\in(\eta,1/\eta)^{\mathcal R_s}$ for all $s\in\{1,\ldots, p\}$
and all $t\ge 0.$
Let $T(c_0)$ be a trajectory of $(\mathcal N,\kappa)$ and let $M>1$ be
such that $T(c_0)\subset [0,M]^2$. 
If ${\mathcal L}^+={\mathcal L}^+(\{\mathcal N_s\}_{1\le s\le p}, \Psi,
\eta, M)$ is constructed such that $c_0\in \mathcal L^+$, then
$T(c_0)\subset \mathcal L^+.$
\end{cor}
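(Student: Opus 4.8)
The plan is to deduce the corollary directly from Theorem~\ref{thm:proj} by invoking Nagumo's Theorem~\ref{thm:nagumo} with the closed convex set $K=\mathcal L^+$ and the vector field $f$ of the $2D$-reduced system. First I would record the two facts about $\mathcal L^+$ that make the machinery applicable. By hypothesis $c_0\in\mathcal L^+$, and $\mathcal L^+=\mathrm{conv}(\mathcal L)$ is closed and convex; moreover its relative boundary is exactly the curve $\mathcal L$. Indeed, the finite polygonal line $[A_1\ldots A_{e+1}]$ is convex by construction and is completed at $A_1$ and $A_{e+1}$ by the vertical and horizontal half-lines, so $\mathcal L$ bounds the convex region $\mathcal L^+$ and $\partial\mathcal L^+=\mathcal L$. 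I would also note that the construction forces every point of $\mathcal L$ to have both coordinates at least $d>0$, whence $\mathcal L^+\subset[d,\infty)^2\subset\mathbb R^2_{>0}$; this is what turns the containment $T(c_0)\subset\mathcal L^+$ into a persistence statement.

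Next I would verify the sub-tangentiality condition of Theorem~\ref{thm:nagumo}. Suppose $t_0\ge 0$ is such that $x(t_0)\in\partial\mathcal L^+=\mathcal L$. Since $T(c_0)\subset[0,M]^2$, the point $x(t_0)$ lies in $\mathcal L\cap[0,M]^2$, which is precisely the domain on which Theorem~\ref{thm:proj} is stated; and the standing assumption $\kappa_s(t)\in(\eta,1/\eta)^{\mathcal R_s}$ for all $s$ and all $t\ge 0$ supplies the remaining hypothesis of that theorem at $t=t_0$. Theorem~\ref{thm:proj} then gives ${\bf n}\cdot f(t_0,x(t_0))\ge 0$ for all ${\bf n}\in -N_{\mathcal L^+}(x(t_0))$, which is the same as ${\bf n}\cdot f(t_0,x(t_0))\le 0$ for all ${\bf n}\in N_{\mathcal L^+}(x(t_0))$ — exactly the sub-tangentiality inequality required by Nagumo's theorem. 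Applying Theorem~\ref{thm:nagumo} yields $T(c_0)\subset\mathcal L^+$.

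The only remaining hypothesis of Nagumo's theorem is uniqueness of solutions for $\dot c=f(t,c)$, which I would dispatch by observing that on the relevant region (within $[0,M]^2$ and away from the axes) each monomial $(\Psi{\bf x})^{\Psi P+a_s}$ is smooth in ${\bf x}$ and $\kappa_s$ is piecewise differentiable in $t$, giving local Lipschitz dependence in ${\bf x}$ and hence uniqueness. The substantive content — that the dominating reaction vector points into $\mathcal L^+$ on each cell of the partition of the phase plane — has already been established in Theorem~\ref{thm:proj}. Consequently the main (and essentially only) obstacle at this stage is the bookkeeping: matching $\partial\mathcal L^+$ with the domain $\mathcal L\cap[0,M]^2$ of Theorem~\ref{thm:proj} and carrying out the sign translation between its $-N_{\mathcal L^+}$ convention and the $N_K$ convention of Nagumo's theorem. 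Everything else is immediate.
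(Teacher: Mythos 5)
Your proof is correct and follows exactly the route the paper intends: the paper itself gives no further detail beyond asserting that the corollary ``follows from Theorem \ref{thm:proj} using Nagumo's Theorem \ref{thm:nagumo},'' and your write-up supplies precisely the bookkeeping that assertion leaves implicit (identifying $\partial\mathcal L^+$ with $\mathcal L$, restricting to $\mathcal L\cap[0,M]^2$ via the hypothesis $T(c_0)\subset[0,M]^2$, and the sign translation between $-N_{\mathcal L^+}$ and $N_K$). Nothing further is needed.
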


\begin{remark}\label{rem:Lbar}
Corollary \ref{cor:proj} remains valid if instead of $c_0\in \mathcal
L^+$ we have $c_0\in {\overline{\mathcal L}}^+=  
{\overline{\mathcal L}}^+(\{\mathcal N_s\}_{1\le s\le p},\Psi, \eta).$
In that case we conclude  that
$T(c_0)\subset \overline{\mathcal L}^+.$
\end{remark}

We conclude this section with the following result, which will be
useful in section \ref{sec:gac}.

\begin{cor}\label{cor:x+y}
Let $\bigcup_{s=1}^p(\mathcal N_s,\kappa_s,\Psi, a_s)$ be a
2D-reduced mass-action system 
where $\mathcal N_s=(\mathcal S,\mathcal C_s,\mathcal R_s)$ are
lower-endotactic networks for all $s\in\{1,\ldots, p\}$ 
and suppose that
$\kappa_s(t)\in(\eta,1/\eta)^{\mathcal R_s}$ for all $s\in\{1,\ldots, p\}$
and all $t\ge 0.$
Then there exist $\epsilon>0$ and $\tau>0$ such that if $c_0=(x_0,y_0)\in
[0,\epsilon]^2$ then $x+y\ge \tau (x_0+y_0)$ for all $(x,y)\in T(c_0).$
\end{cor}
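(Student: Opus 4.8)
The plan is to reduce the statement to the confinement machinery already in place. By Remark~\ref{rem:Lbar}, any trajectory that starts in the unbounded region $\overline{\mathcal L}^+$ cut off by a barrier $\overline{\mathcal L}=\overline{\mathcal L}(\{\mathcal N_s\},\Psi,\eta)$ remains in $\overline{\mathcal L}^+$ for all time. Since $x+y$ increases in the $(1,1)$-direction, its minimum over $\overline{\mathcal L}^+$ is attained on the finite polygonal line $\overline{\mathcal L}$ itself, so trapping $T(c_0)$ in such a region gives at once a time-uniform bound $x+y\ge \min_{\overline{\mathcal L}}(x+y)$. The entire task is therefore to manufacture, for each small $c_0$, a \emph{valid} barrier whose scale is comparable to $x_0+y_0$ and which has $c_0$ on its outer side.

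First I would set up a one-parameter family of barriers. Fixing $\delta$ and $\xi$ as in the construction of Section~\ref{sec:constr} (these depend only on the system, not on $c_0$), I let $\overline{\mathcal L}_a$ be the barrier anchored at $A_0=(0,a)$, for $a\in(0,a^*]$ with $a^*$ small enough that all vertices lie in $(0,\xi]^2$; by Remark~\ref{rem:Lbar} each $\overline{\mathcal L}_a$ is valid for the given $\eta$. I then record three elementary facts about $\overline{\mathcal L}_a$, viewed simply as a convex decreasing polygonal line from $(0,a)$ to $(b(a),0)$ whose segment slopes lie in the fixed finite set $\{-1/\tilde r_i\}$ (together with the degenerate slope $-1$ when ${\bf V}=\O$). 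Writing $m(a)=\min_{\overline{\mathcal L}_a}(x+y)$ and $M(a)=\max_{\overline{\mathcal L}_a}(x+y)=\max(a,b(a))$: (a) the bounded region enclosed by $\overline{\mathcal L}_a$ and the two axes is contained in $\{x+y\le 2M(a)\}$, since it lies in $[0,a]\times[0,b(a)]$; (b) $m(a)\ge \tau' M(a)$ with $\tau'=\bigl(1+\max_i|1-1/\tilde r_i|\bigr)^{-1}>0$ depending only on the slope set; and (c) $M(a)$ is continuous in $a$ and $M(a)\to 0$ as $a\to 0$, by induction along the finitely many vertices.

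The endgame is then clean. Given $c_0\in[0,\epsilon]^2\setminus\{0\}$, put $\rho=x_0+y_0$. By (c) and the intermediate value theorem I choose $a$ with $M(a)=\rho/4$, which is possible once $\epsilon$ is small enough that $\rho/4\le M(a^*)$. By (a) the enclosed region lies in $\{x+y\le \rho/2\}$, so $c_0$, for which $x+y=\rho>\rho/2$, lies in $\overline{\mathcal L}_a^+$. Remark~\ref{rem:Lbar} gives $T(c_0)\subset\overline{\mathcal L}_a^+$, whence $x+y\ge m(a)\ge \tau' M(a)=\tau'\rho/4$ for all $(x,y)\in T(c_0)$, proving the claim with $\tau=\tau'/4$ and the $\epsilon$ fixed above.

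The main obstacle, and the reason a naive argument fails, is that the barrier construction is \emph{not} scale invariant: the bounding power curves $y=\delta x^{\tau(\tilde r_i)}$ transform nontrivially under $(x,y)\mapsto(\lambda x,\lambda y)$, so one cannot simply dilate a single barrier to match $c_0$. My plan circumvents this by \emph{rebuilding} a fresh valid barrier at each scale through the free parameter $A_0$ rather than rescaling one. Within this scheme the delicate point is fact (b): the ratio $m(a)/M(a)$ must be bounded below \emph{uniformly in $a$}, hence independently of the scale-dependent segment lengths. This is exactly where convexity saves us — for a convex decreasing polygonal line the minimizing vertex is the one at which the slope crosses $-1$, and bounding the growth of $x+y$ from that vertex out to each axis endpoint shows the ratio is controlled by the finite admissible slope set alone. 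Confirming (c), i.e. that the vertices may be chosen to vary continuously with $A_0$, is routine since each is selected from an open region between consecutive power curves.
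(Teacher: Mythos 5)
Your argument is essentially the paper's own proof: both rebuild the barrier $\overline{\mathcal L}$ at a scale comparable to $c_0$ (the paper anchors $\overline{\mathcal L}_0$ so that $c_0$ lies on the polygonal line and draws parallels to its extreme segments through $c_0$; you choose the anchor by the intermediate value theorem so that $c_0$ lies on its unbounded side), trap $T(c_0)$ in $\overline{\mathcal L}^+$ via Corollary \ref{cor:proj} and Remark \ref{rem:Lbar}, and extract the lower bound on $x+y$ from the fact that the slopes of $\overline{\mathcal L}$ lie in the fixed finite set $\{-1/\tilde r_i\}$. The only slip is the explicit constant $\tau'=(1+\max_i|1-1/\tilde r_i|)^{-1}$ in your fact (b), which does not control the shallow direction when $\tilde r_e$ is large (walking from the minimizing vertex to the $x$-axis can inflate $x+y$ by a factor up to $\max\{1,\tilde r_e\}$); your own vertex argument actually yields $\tau'=\min\{1,\tilde r_1,1/\tilde r_e\}$, which is exactly the paper's $\tau$.
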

\begin{proof}
Let 
${\overline{\mathcal L}}={\overline{\mathcal L}}(\{\mathcal N_s\}_{1\le s\le p},\Psi, \eta)$
and let 
$\epsilon>0$ be such that 
$[0,\epsilon]^2\cap {\overline{\mathcal L}}(\{\mathcal N_s\}_{1\le s\le p}, \Psi, \eta)=\O.$
Once ${\overline{\mathcal L}}$ is constructed, we can shift it as close to the
origin as desired. 
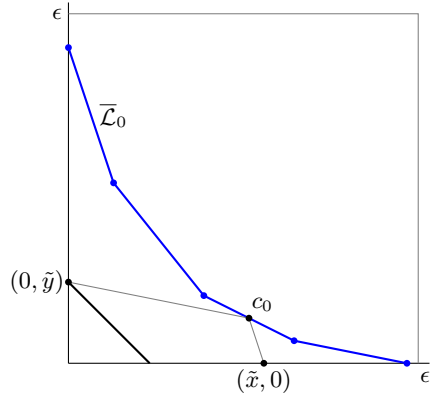
\begin{figure}[h]
\begin{center}
\begin{tikzpicture}[thick, >=stealth, scale = .3] 
\draw[thin](0,0) -- (16,0); 
\draw[thin](0,0) -- (0,16); 

\draw[blue](15,0)--(10,1)--(6,3)--(2,8)--(0,14);
\fill[blue] (15,0) circle (.15);
\fill[blue] (10,1) circle (.15);
\fill[blue] (6,3) circle (.15);
\fill[blue] (2,8) circle (.15);
\fill[blue] (0,14) circle (.15);

\draw[thin,gray](8.66,0)--(8,2)--(0,3.6);
\draw(0,3.6)--(3.6,0);

\draw[thin,gray](0,15.5)--(15.5,15.5)--(15.5,0);

\fill (8,2) circle (.15);
\fill (8.66,0) circle (.15);
\fill (0,3.6) circle (.15);
\small
\node at (8.6, 2.6){$c_0$};
\node at (8.66, -0.8){$(\tilde x, 0)$};
\node at (-1.4,3.6){$(0,\tilde y)$};
\node at (2, 11) {$\overline{\mathcal L}_0$};
\node at (-0.5, 15.5){$\epsilon$};
\node at (15.8,-0.6){$\epsilon$};
\end{tikzpicture} 
\caption{Illustration for Corollary \ref{cor:x+y}}\label{fig:x+y}
\end{center}
\end{figure}
In particular,
if $c_0\in [0,\epsilon]^2$, we may construct 
$\overline{\mathcal L}_0=\overline{\mathcal L}_0(\{\mathcal N_s\}_{1\le s\le p},
\Psi, \eta)$ such that $c_0\in \overline{\mathcal L}_0.$
Corollary \ref{cor:proj} and Remark \ref{rem:Lbar} imply that $T(c_0)$
lies in the unbounded part of  $\mathbb R_{\ge 0}^2$ delimited by
$\overline{\mathcal L}_0.$ We draw lines through $c_0$ that are parallel
to the extreme line segments of $\overline{\mathcal L}_0$ and denote their
intersection with the coordinate axes by $(0,\tilde y)$ and $(\tilde x,0)$
(see Figure \ref{fig:x+y}). Then $T(c_0)$ lies above the line
$x+y=\min\{\tilde x, \tilde y\}.$ 
Using the notation from (\ref{eq:ris}) we have $\tilde x=x_0+r_1y_0$
and $\tilde y = x_0/r_e +y_0$ and the conclusion follows by choosing 
$\tau=\min\{1,r_1,1/r_e\}.$ 
\end{proof}

\end{subsection}

\end{section}

\begin{section}{Persistence of $\kappa$-variable mass-action systems 
with two-dimensional stoichiometric subspace}\label{sec:pers}

The main persistence result of this paper is the following. 
\begin{thm}\label{thm:pers}
In any $\kappa$-variable mass-action system with two-dimensional
stoichiometric subspace and lower-endotactic stoichiometric subnetworks, 
all bounded trajectories are persistent.
\end{thm}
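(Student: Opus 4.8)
The plan is to reduce the persistence of the original $n$-species system to that of an associated $2$D-reduced mass-action system, which has already been settled by Corollary \ref{cor:proj}. Since every trajectory is confined to its stoichiometric compatibility class $(c_0+S)\cap\mathbb R^n_{\ge 0}$ and $\dim S=2$, this class is a two-dimensional convex polygon. By the observation following Definition \ref{def:omega}, a bounded trajectory with positive initial condition fails to be persistent exactly when some $\omega$-limit point $c_*$ lies on the \emph{relative} boundary of this polygon. I would argue by contradiction, assuming such a $c_*$ exists, and partition the species according to $c_*$: let $W=\{i:(c_*)_i>0\}$ and $Z=\complement W=\{i:(c_*)_i=0\}$.

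The central step is to exhibit the dynamics of the trajectory, as it approaches $c_*$, as a $2$D-reduced mass-action system with lower-endotactic two-species subnetworks. Concretely, I would choose two coordinates $l,k$ whose projection identifies $S$ with $\mathbb R^2$, and record how each species depends on them along the compatibility class. The vanishing species in $Z$ furnish the monomial data encoded by the matrix $\Psi$ of (\ref{eq:psi}) (with $p_l=q_k=1$, $p_k=q_l=0$, and the remaining nonnegative rational $p_i,q_i$ read off from the linear relations defining $S$), while the species in $W$, being bounded away from $0$ near $c_*$, are absorbed into the rate-constant functions exactly as in the projection (\ref{eq:projjj}). This is where the $\kappa$-variable hypothesis is indispensable: the absorbed positive concentrations stay in a compact subinterval of $(0,\infty)$, so the resulting rate constants remain confined to some $(\eta,1/\eta)^{\mathcal R}$, as required by Corollary \ref{cor:proj}. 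The decomposition of the original network into its stoichiometric subnetworks (Definition \ref{def:canonSub}) supplies the summands $\mathcal N_s$ of the $2$D-reduced system, with each shift $a_s$ arising as the coset representative of the affine span $a_s+S$ containing the complexes of the $s$-th subnetwork; and Proposition \ref{prop:transform} guarantees that each reduced two-species network inherits lower-endotacticity from its stoichiometric subnetwork, the reduction being realized by an affine transformation that carries $\mathrm{aff}_+$ into $\mathbb R^2_{\ge 0}$.

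With this reduction in place, Corollary \ref{cor:proj} (together with Remark \ref{rem:Lbar}) applies. Fixing a late time $t_1$ at which the projected trajectory $(x(t),y(t))$ is close to $\partial\mathbb R^2_{\ge 0}$, I would build a set $\overline{\mathcal L}^+$ containing $(x(t_1),y(t_1))$ and bounded away from both coordinate axes; Corollary \ref{cor:proj} then forces the forward projected trajectory to remain in $\overline{\mathcal L}^+$. But $c_*$ being an $\omega$-limit point on the relative boundary forces $(x(t),y(t))$ to return arbitrarily close to $\partial\mathbb R^2_{\ge 0}$ (to a coordinate axis or the origin, depending on which of $l,k$ lie in $Z$) infinitely often, which is impossible inside the fixed set $\overline{\mathcal L}^+$. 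This contradiction shows that no $\omega$-limit point can lie on the relative boundary, and hence that every bounded trajectory is persistent.

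The main obstacle I anticipate is making the reduction of the second paragraph fully rigorous: choosing the pair $(l,k)$ and the exponents $\Psi$ so that $S$ is faithfully parametrized with \emph{nonnegative} rational entries and so that approaching $c_*$ is detected as approaching $\partial\mathbb R^2_{\ge 0}$ even when $Z$ contains more than two species (several vanishing coordinates then being expressed through the two chosen ones via $S$); verifying that the resulting affine/monomial description genuinely reproduces the vector field (\ref{eq:projVarMassAct}) near the boundary; and controlling the bounds on the absorbed rate constants uniformly along the trajectory. Transporting the essential-support and parallel-sweep structure through this reduction is where the delicate bookkeeping lies, whereas the final contradiction via Corollary \ref{cor:proj} is immediate.
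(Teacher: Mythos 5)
Your overall framework---projecting onto a well-chosen pair of coordinates, realizing the projected dynamics as a 2D-reduced mass-action system via $\Psi$ and the shifts $a_s$, invoking Proposition \ref{prop:transform} for lower-endotacticity and Corollary \ref{cor:proj} for invariance of $\overline{\mathcal L}^+$---is exactly the machinery the paper uses. But the argument as you structure it has a genuine gap at its final step, and it is not the bookkeeping issue you flag. Your contradiction requires $\overline{\mathcal L}^+$ to be forward-invariant for \emph{all} $t\ge t_1$, and Corollary \ref{cor:proj} delivers that only if the absorbed rate constants $\overline\kappa_{s,Q\to Q'}(t)$ stay in a fixed interval $(\eta',1/\eta')$ for all $t\ge t_1$. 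Those rate constants contain the factors $\pi_{\complement W}(c(t))^{\pi_{\complement W}(\cdot)}$, i.e., the coordinates that are positive \emph{at} $c_*$. Since $c_*$ is merely an $\omega$-limit point, the trajectory visits every neighborhood of $c_*$ infinitely often but is free to wander far from $c_*$ in between; during such excursions a coordinate in your set $W$ may itself become arbitrarily small (the trajectory may approach a different face of $S(c_0)$), the absorbed rate constants degenerate to $0$, the sub-tangentiality condition of Theorem \ref{thm:proj} is no longer available, and the projected trajectory can exit $\overline{\mathcal L}^+$ and re-enter closer to the axes. A single fixed projection therefore cannot confine the trajectory globally, and the contradiction does not go through.

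This is precisely the difficulty that drives the structure of the paper's proof beyond the easy case (Proposition \ref{prop:prin0}, where $S(c_0)$ contains the origin and one projection suffices because $c=\Psi\pi_{k,l}(c)$ exactly, with nothing absorbed). In general the paper builds, for \emph{every} vertex $F_W$ of $S(c_0)$ and every pair $(k,l)\in W^1\times W^2$, a cylinder $\mathcal H^+_{k,l}$ over $\mathcal L^+_{k,l}$, and takes $\mathcal T^+$ to be the intersection of all of them (together with $d{\bf 1}+\mathbb R^n_{\ge 0}$). The invariance proof at a boundary point ${\bf x}$ of $\mathcal T^+$ then performs a case analysis: if the relevant projected point lies on the finite part of $\mathcal L_{k,l}$, the choices of $\xi$ and $\zeta$ in (\ref{eq:xiIneq})--(\ref{eq:zeta}) guarantee the complementary coordinates are bounded below so the absorbed constants are controlled; if it lies on the infinite part, Lemmas \ref{lem:fz}, \ref{lem:d} and \ref{lem:2} are used to locate a possibly \emph{different} vertex $F_{\overline W}$ and pair $(k,\overline l)$ for which the bound does hold, and the normal-cone inequality is transferred between the two projections. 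That switching mechanism is the essential idea your proposal is missing; without it (or some substitute that controls the trajectory during its excursions away from $c_*$), the reduction to Corollary \ref{cor:proj} only yields a local repelling estimate near $c_*$ (in the spirit of Lemma \ref{lem:repel}), which is strictly weaker than persistence.
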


A little additional terminology and a couple of lemmas are needed to arrive at a proof
of Theorem \ref{thm:pers}. 
For the remainder of this section, we fix a $\kappa$-variable mass-action
system $(\mathcal N,\kappa)=(\mathcal S, \mathcal C, \mathcal R,
\kappa)$ having $n$ species, stoichiometric subspace
$S$ of dimension two, and lower-endotactic stoichiometric subnetworks
$\mathcal N_s=(\mathcal S, \mathcal C_s, \mathcal R_s), \ s\in\{1,\ldots, p\}.$ 
We assume that $\kappa(t)\in (\eta,1/\eta)^{\mathcal R}.$ We also let 
$T(c_0)=\{c(t)=(x_1(t),\ldots, x_n(t)) \mid t\ge 0, c(0)=c_0\}$ be a
bounded trajectory of $(\mathcal N, \kappa)$ such that
$c_0\in\mathbb R^n_{>0}$ and $T(c_0)\subset [0,M]^n$ for some $M>1.$

\begin{subsection}{Preliminary setup}\label{subsec:prelim}
For $W\subseteq \{1,\ldots, n\}$ we let 
$$Z_W=\{(x_1,\ldots, x_n)\in\mathbb R^n|x_i=0 \text{ for all }i\in W\}.$$
The relative boundary of the polyhedron
$S(c_0)$ is included in $\partial\mathbb R^n_{\ge 0}$ and we may
identify a face of $S(c_0)$ by the minimal face of $\mathbb
R^n_{\ge 0}$ that contains it. More precisely, if a face of
$S(c_0)$ is included in $Z_{W}$ and  $W\subset\{1,\ldots,
n\}$ is maximal with this property, then we denote 
that face by $F_W.$ Note that if $W^1\subset W,$ then $F_W\subset
F_{W^1}$, and $F_W= F_{W^1}$ if and only if $W=W^1.$

Decreasing $n$ if necessary, we can assume that 
$S$ intersects the open positive orthant,  
$S\cap \mathbb R_{>0}^n\neq \O.$ 
Indeed, if this is not true, then some coordinates $x_i,$ $i\in
\complement V\subset \{1,\ldots, n\}$ of $c(t)$ are constant and
may be disregarded, by replacing $(\mathcal N, \kappa)$ with its
projection  $(\mathcal N_V, K_V)$ onto
$V.$ Note that the properties of $\mathcal N$ 
are inherited by $\mathcal N_V$: the stoichiometric
subspace of  $\mathcal N_V$ has the same dimension as $S$; the
projected kinetics is $\kappa$-variable mass-action (see
(\ref{eq:projjj})) and $\mathcal N_V$ has lower-endotactic stoichiometric 
subnetworks from Proposition \ref{prop:transform};
finally, $\pi_V(T(c_0))\in [0,M]^V.$

Each vertex $F_W$ of $S(c_0)$ has two adjacent edges, which we will henceforth
denote $F_{W^1}$ and $F_{W^2}.$ We have $W^1\cup W^2 \subseteq W$ and also 
$W^1\cap W^2= \O,$ for otherwise $S$ would not intersect the interior
of the positive orthant. Let ${\bf v}^1(W)=(p_1,\ldots, p_n)$ and  
${\bf v}^2(W)=(q_1,\ldots, q_n)$ be vectors (unique up to positive
scalar multiplication) along $F_{W^1}$ and
$F_{W^2}$ respectively, such that the cone with
vertex at $F_W$ generated by $F_{W^1}$ and $F_{W^2}$ contains $S(c_0).$  
Then, for any $i\in\{1,\ldots, n\},$ not both $p_i$ and $q_i$ can be
zero, for otherwise, again, $S\subset \partial\mathbb R^n_{\ge 0}.$ 

\begin{remark}\label{rem:signpq}
We have
\begin{eqnarray}\label{eq:signpq}
&p_i=0\text{ for }i\in W^1, p_i\neq 0\text{ for }i\in\complement W^1\text{ and } p_i > 0\text{ for }i\in W\backslash W^1,\\  
\nonumber
&q_i=0\text{ for }i\in W^2, q_i\neq 0\text{ for }i\in\complement W^2\text{ and } q_i > 0\text{ for }i\in W\backslash W^2.
\end{eqnarray}
Indeed, if $p_i=0$ for some $i\in \complement W^1,$ then
$F_{W^1}=F_{W^1\cup\{i\}}$, contradiction.
On the other hand, for any ${\bf x}=(x_1,\ldots, x_n)\in F_{W^1}\backslash
F_W,$ we have ${\bf x}=F_W+a_{\bf x}(p_1,\ldots, p_n)$ for some $a_{\bf x}> 0,$
and therefore $x_i=a_{\bf x}p_i$ for all $i\in W.$ Since $x_i\in F_{W^1}$ it follows that $p_i=0$ for $i\in W^1.$ Moreover, since $x_i>0$
for $i\in W\backslash W^1$, we have $p_i>0$ for $i\in W\backslash W^1.$
The explanation for $q_i,\ i\in\{1,\ldots, n\}$ is similar.
\end{remark}

\begin{remark}\label{rem:lsmk}
If $(k,l)\in W^1\times W^2,$ then we may rescale ${\bf v}^1(W)$ and
${\bf v}^2(W)$ such that $p_l=q_k=1.$ Moreover, by swapping 
${\bf v}^1(W)$ with ${\bf v}^2(W)$ if necessary,
we may also assume that $l<k.$ 
Let 
\begin{equation}\label{eq:Theta}
\Theta = ({\bf v}^1(W)\quad {\bf v}^2(W))
\end{equation}
be the matrix with columns ${\bf v}^1(W)$ and ${\bf v}^2(W).$
Since ${\bf v}^1(W)$ and ${\bf v}^2(W)$
generate $S,$ we have
\begin{equation}\label{eq:Thetapi}
{\bf x}=\Theta \pi_{k,l}({\bf x}) \text{ for any }{\bf x}\in S
\end{equation}
(here, if ${\bf x} = (x_1,\ldots, x_n),$ then $\pi_{k,l}({\bf x})=(x_l,x_k)$). 
In particular, it follows that
\begin{equation}\label{eq:ThetapiSc0}
{\bf x}-F_W=\Theta \pi_{k,l}({\bf x})\text{ for all } {\bf x}\in S(c_0).
\end{equation} 
\end{remark}

\begin{remark}\label{rem:a_s}
Let $(k,l)\in W^1\times W^2$ and let
$\mathcal N_s=(\mathcal S, \mathcal C_s, \mathcal R_s), \
s\in\{1,\ldots, p\}$ denote the stoichiometric subnetworks of $\cal N$. 
Then, for each $s\in\{1,\ldots, p\},$ we may choose 
$a_s\in\mathbb R^n$ such that $\mathcal C_s\subset S+a_s$ and
$\pi_{k,l}(a_s)=(0,0)$ (recall that $\pi_{k,l}$ denotes the projection
onto $\{k,l\}$).
Indeed, suppose $\mathcal C_s\subset S+(\gamma_1,\ldots,\gamma_n)$ for some
$(\gamma_1,\ldots,\gamma_n)\in\mathbb R^n.$ 
Then we look for $a_s$ such that 
$$a_s-(\gamma_1,\ldots,\gamma_n)\in S=\mathrm{span}\{{\bf v}^1(W), {\bf v}^2(W)\}$$ and
$\pi_{k,l}(a_s)=(0,0).$
Assuming $p_l=q_k=1$ as in Remark \ref{rem:lsmk}, we define
$$a_s=(\gamma_1,\ldots,\gamma_n)-\gamma_l{\bf v}^1(W)-\gamma_k{\bf v}^2(W).$$
\end{remark}

\begin{example}\label{ex:vertices}
It might be helpful at this point to illustrate the notations introduced thus far in this
section by revisiting the network in Example \ref{eq:exNet}. If we let $c_0=(1, 1, 1, 1),$
it is not hard to see that $S(c_0)$ is a square with vertices at $(0,0,2,2),\ (0,2,0,2)\ (2,0,2,0)$
and $(0,0,2,2)$. Let $W=\{3,4\}$ and consider the vertex $F_W=(2,2,0,0)$ of $S(c_0).$ For any 
$(x_1,x_2,x_3,x_4)\in S(c_0)$ we may write 
$$(x_1,x_2,x_3,x_4) - (2,2,0,0) = x_3(0,-1,1,0)+x_4(-1, 0, 0, 1)$$
and therefore ${\bf v}^1(W)=(0,-1,1,0)$,  ${\bf v}^2(W)=(-1,0,0,1)$ and
$W^1=\{4\}$, $W^2=\{3\}.$ The face $F_{W^1}$ of $S(c_0)$ is
$\{(2,2-x,x,0)\in\mathbb R^4|x\in [0,2]\}$ and is parallel to ${\bf v}^1(W).$
As for the vectors $a_s$ from Remark \ref{rem:a_s}
corresponding to our two stoichiometric subnetworks, we have 
$a_1=(2,2,0,0)$ and $a_2=(2,0,0,0).$  
\end{example}

\begin{remark}\label{rem:injective}
Since ${\bf v}^1(W)$ and ${\bf v}^2(W)$ generate $S$,
it follows that 
\begin{equation}\label{eq:Theta2}
{\bf x}=\Theta\pi_{k,l}({\bf x}) \text{ for any }{\bf x}\in S.
\end{equation} 
In particular, we have $\ker \pi_{k,l}\cap S =0$ and therefore
$\pi_{k,l}$ is injective on $S.$ It follows that $\pi_{k,l}$ is
also injective on $a_s+S.$ 

If $Q$ is a column vector in $\mathbb R^{\{k,l\}}\simeq \mathbb R^2$
then $\Theta Q\in S$ and we have 
$$\pi_{k,l}(a_s+\Theta Q)=\pi_{k,l}(a_s)+\pi_{k,l}(\Theta Q)=Q.$$
Since, as explained above, $\pi_{k,l}$ is injective on $a_s+S,$ we
conclude that 
\begin{equation}\label{eq:preImage}
\pi_{k,l}^{-1}(Q)\cap (a_s+S)=\{a_s+\Theta Q\}.
\end{equation} 
\end{remark}

\begin{subsection}{A glimpse into the rest of section \ref{sec:pers}} \label{sec:glimpse}
Before we dive into the technical arguments that lead to a proof of Theorem \ref{thm:pers}, 
it is worth considering a couple of examples. The aim is to illustrate how the 
machinery of projections and 2D-reduced mass-action systems comes into place 
and to hint at the idea behind the proof of Theorem \ref{thm:pers}.

Let us first consider the following $\kappa$-variable mass-action system:

\begin{equation}\label{ex:netProof1}
A+B\mathop{\rightleftharpoons}^{\kappa_1}_{\kappa_2}2C\quad A+C\mathop{\rightleftharpoons}^{\kappa_3}_{\kappa_4}A.
\end{equation}

We denote ${\bf x}=(c_A,c_B,c_C)$ the concentration vector and we write
the corresponding differential equations in the form

\begin{equation}\label{eq:exProof1}
\left(
\begin{matrix}
\dot c_A\\
\dot c_B\\
\dot c_C\\
\end{matrix}
\right)=
\kappa_1(t){\bf x}^{A+B}
\left(
\begin{matrix}
-1\\
-1\\
2\\
\end{matrix}
\right)+
\kappa_2(t){\bf x}^{2C}
\left(
\begin{matrix}
1\\
1\\
-2\\
\end{matrix}
\right)+
\kappa_3(t){\bf x}^{A+C}
\left(
\begin{matrix}
0\\
0\\
-1\\
\end{matrix}
\right)+
\kappa_4(t){\bf x}^{A}
\left(
\begin{matrix}
0\\
0\\
1\\
\end{matrix}
\right).
\end{equation}

Note that the stoichiometric subspace of (\ref{ex:netProof1}) is $S=\{(x_1,x_2,x_3)\in\mathbb R^3\ |x_1=x_2\}$ 
and it intersects the positive orthant $\mathbb R^3_{>0}.$ Let then $c_0\in S\cap \mathbb R^3_{>0}$ be a
positive initial condition for (\ref{eq:exProof1}) such that $T(c_0)$ is bounded. If we let 
$$\Psi=
\left(
\begin{matrix}
1 &0\\
1 &0\\
0 &-1
\end{matrix}
\right)
$$ 
then for ${\bf x}=(c_A,c_B,c_C)\in T(c_0)$ and denoting ${\bf y} = (c_A,c_C)$ we have
\begin{equation}\label{eq:exPsi1}
{\bf x}=
\Psi{\bf y}.
\end{equation} 
Moreover, note that
$$A+B = 
\left(
\begin{matrix}
1\\
1\\
0
\end{matrix}
\right)=
\Psi
\left(
\begin{matrix}
1\\
0
\end{matrix}
\right)=
\Psi(A),
$$ 
where, by a useful abuse of notation, the argument $A$ of $\Psi$ is viewed as the vector 
of two coordinates $(1, 0)$ in the plane $AC$. One can write similar equalities to obtain
\begin{equation}\label{eq:PsiComp}
A+B = \Psi(A),\ 2C=\Psi(2C),\ 
A =\Psi(A)+(0, -1, 0)^t,\ A+C = \Psi(A+C)+(0, -1, 0)^t;
\end{equation}
\noindent denote $a_1=(0,0,0)$ and $a_2=(0,-1,0).$

Projecting (\ref{ex:netProof1}) onto $\{A,C\}$ yields the reaction network 
$$A\rightleftharpoons 2C\quad A+C\rightleftharpoons A$$
whose dynamics is obtained by substituting (\ref{eq:exPsi1}) and (\ref{eq:PsiComp}) into
(\ref{eq:exProof1}):  

\begin{eqnarray}\label{eq:exProj1}
\dot {\bf y}=
\kappa_1(t)(\Psi{\bf y})^{\Psi(A)+a_1}
\left(
\begin{matrix}
-1\\
2\\
\end{matrix}
\right)+
\kappa_2(t)(\Psi{\bf y})^{\Psi(2C)+a_1}
\left(
\begin{matrix}
1\\
-2\\
\end{matrix}
\right)
\\
+\kappa_3(t)(\Psi{\bf y})^{\Psi(A+C)+a_2}
\left(
\begin{matrix}
0\\
-1\\
\end{matrix}
\right)+
\kappa_4(t)(\Psi{\bf y})^{\Psi(A)+a_2}
\left(
\begin{matrix}
0\\
1\\
\end{matrix}
\right).
\end{eqnarray}

This kinetics corresponds to the 2D-reduced mass-action system 
$({\cal N}_1,\Psi,\kappa',a_1)\cup ({\cal N}_2,\Psi,\kappa'',a_2),$
where 
$${\cal N}_1=\{A\rightleftharpoons 2C\},\  
{\cal N}_2=\{A+C\rightleftharpoons A\},$$
and 
$$\kappa'_{A\to 2C}(t)=\kappa_1(t),\ 
\kappa'_{2C\to A}(t)=\kappa_2(t),\
\kappa''_{A+C\to A}(t)=\kappa_3(t),\
\kappa''_{A\to A+C}(t)=\kappa_4(t).
$$
Corollary $\ref{cor:proj}$ implies that the trajectory of ${\bf y}$
is persistent; from (\ref{eq:PsiComp}) it follows that $T(c_0)$ is 
persistent as well.  

The argument above can be written in the general case without much additional 
effort; this is done in Proposition \ref{prop:prin0}. Note that, although
it illustrates very well the use of projections and 
2D-reduced mass-action, the example discussed above is rather special by
insisting that $S(c_0)$ contain the origin. To see what issues might
arise if this is not the case, let us next revisit the system 
(\ref{ex:MA}), which we assume to be $\kappa$-variable mass-action.   
Choose $c_0$ in the same stoichiometric compatibility class as $(1,1,1,1).$  
Since  $S(c_0)$ is bounded, so is $T(c_0).$ 
Note that for any ${\bf x}=(c_A,c_B,c_C,c_D)\in T(c_0)$ we have $c_A+c_D=c_B+c_C=2.$

We will now  give an heuristic explanation of the fact 
that $c_A$ cannot become too small (the same reasoning
may be applied to the rest of the concentrations). 
We aim, as in the previous example, to project our system onto a 2D face of 
$\mathbb R^4_{\ge 0}$ and realize the projected 
dynamics as 2D-reduced mass-action, in order to conclude that $c_A(t)$ stays bounded
away from zero. Let us consider the projection onto $\{A,B\}.$  
As  illustrated in Example \ref{ex:projNet}, the projected network 
\begin{equation}\label{eq:reducedNet1}
B\mathop{\rightleftharpoons}^{\overline\kappa_1}_{\overline\kappa_2} A\mathop{\to}^{\overline\kappa_3} 
A+B\mathop{\to}^{\overline\kappa_4} 0\quad 
2A\mathop{\to}^{\overline\kappa_5} A\mathop{\gets}^{\overline\kappa_6} 0
\end{equation}
\noindent is lower-endotactic and the projected dynamics can be written in the form
\begin{eqnarray}\label{eq:projForm1}
\left(
\begin{matrix}
\dot c_A\\
\dot c_B
\end{matrix}
\right)
&=&
\overline{\kappa}_1(t) c_B
\left(
\begin{matrix}
1\\
-1
\end{matrix}
\right)+
\overline\kappa_2(t) c_A
\left(
\begin{matrix}
-1\\
1
\end{matrix}
\right)
+\overline\kappa_3(t) c_A
\left(
\begin{matrix}
0\\
1
\end{matrix}
\right)
\\
&+&
\overline\kappa_4(t) c_Ac_B
\left(
\begin{matrix}
-1\\
-1
\end{matrix}
\right)
+
\overline\kappa_5(t) c_A^2
\left(
\begin{matrix}
-1\\
0
\end{matrix}
\right)+
\overline\kappa_6(t)
\left(
\begin{matrix}
1\\\nonumber
0
\end{matrix}
\right)
\end{eqnarray}
\noindent where
\begin{eqnarray}\label{ex:kappas}\nonumber
&\overline\kappa_1(t)=\kappa_1(t)(2-c_A(t)),\quad
\overline\kappa_2(t)=\kappa_2(t)(2-c_B(t)),\quad
\overline\kappa_3(t)=\kappa_3(t)(2-c_B(t)),\\
&\overline\kappa_4(t)=\kappa_4(t),\quad
\overline\kappa_5(t)=\kappa_5(t),\quad
\overline\kappa_6(t)=\kappa_6(t)(2-c_A(t))^2. 
\end{eqnarray} 

This is a 2D-reduced mass-action system (in fact, this would
be $\kappa$-variable mass-action system, provided we knew that    
$\overline\kappa_i$ are bounded away from zero).
Since $T(c_0)$ is bounded, Theorem \ref{thm:proj} implies  that there 
exists a set ${\cal L}^+_{A,B}\subset\mathbb R^{\{A,B\}}_{>0}$ as in
section \ref{sec:constr} such that the projection of $c_0$ onto 
$\{A,B\}$ lies in 
${\cal L}^+_{A,B}$ and whenever the phase point 
$(c_A(t_0),c_B(t_0))$ of (\ref{eq:projForm1}) is on the boundary of 
${\cal L}^+_{A,B}$, the vector field points inside ${\cal L}^+_{A,B}$; 
this, provided $\overline\kappa_i(t_0)$ 
belongs to a certain interval away from zero and infinity. By inspecting the rates 
$\overline\kappa_i$ in (\ref{ex:kappas}),
we see that this condition is equivalent to saying that $c_A(t_0)$ and $c_B(t_0)$ are not 
too close to 2 (recall that $\kappa_i$ are bounded away from zero 
and infinity, since (\ref{ex:MA}) is $\kappa$-variable mass-action system). 
The case when $c_A(t_0)$ is very close to 2 is not of interest to us as we want 
to show that $c_A$ cannot become too small, and therefore we
look what happens when $c_A(t)$ is close to zero. 

Now we refer back to  Figure \ref{fig:bigPicture}. 
The set ${\cal L}^+_{A,B}$ is a positive translation of the
positive quadrant located at a small distance $d$ from each of the
axes, and with a cut at the corner near the origin. 
To illustrate the point, let us make a gross oversimplification and assume
that ${\cal L}^+_{A,B}$ is a square. (Note, however, that it is not, 
and, although the cut near the origin can be made arbitrarily small, 
it still requires a delicate analysis). With this simplification in place, 
we argue that $c_A$ cannot become smaller than $d.$ Indeed, if at time $t=t_0,$
the trajectory $(c_A,c_B)$ reaches the boundary of ${\cal L}^+_{A,B}$ and $c_A=d,$ then,
as explained above, if  $c_B(t_0)$ is not too close to 2, the trajectory is pushed 
inside ${\cal L}^+_{A,B}$ and $c_A$ increases.
 
On the other hand,  Theorem \ref{thm:proj} does not apply 
for the projected system (\ref{eq:reducedNet1}) if $c_B(t_0)$ is close to 2. However,
in this case $c_C(t_0)=2-c_B(t_0)$ is small and we may project onto $\{A,C\}$ instead.
The projected reaction system 

$$0\mathop{\rightleftharpoons}^{\overline\kappa_1}_{\overline\kappa_2} A+C\mathop{\to}^{\overline\kappa_3} 
A\mathop{\to}^{\overline\kappa_4} C\quad 
2A\mathop{\to}^{\overline\kappa_5} A\mathop{\gets}^{\overline\kappa_6} 0,$$
has rate constant functions  
\begin{eqnarray}\nonumber
&\overline\kappa_1(t)=\kappa_1(t)(2-c_A(t)(2-c_C(t)),\quad
\overline\kappa_2(t)=\kappa_2(t),\quad
\overline\kappa_3(t)=\kappa_3(t),\\\nonumber
&\overline\kappa_4(t)=\kappa_4(t)(2-c_C(t)),\quad
\overline\kappa_5(t)=\kappa_5(t),\quad
\overline\kappa_6(t)=\kappa_6(t)(2-c_A(t))^2 
\end{eqnarray} 
which are all bounded away from zero at $t=t_0.$ If 
${\cal L}^+_{A,C}\subset \mathbb R^{\{A,C\}}$ 
is constructed in the same way as ${\cal L}^+_{A,B}$ and at the same distance $d$ 
from the coordinate axes, then, since $c_A(t_0)=d,$ we have 
$(c_A(t_0),c_C(t_0))\in \partial {\cal L}^+_{A,C}$ and Theorem \ref{thm:proj}
implies that the vector field at $t=t_0$ points inside ${\cal L}^+_{A,C}.$ Once again,
$c_A$ must increase. 

One may recast the discussion above by using a symmetric construction
of an invariant set $\cal T^+$. Namely, one  considers the
cylinder ${\cal L}_{A,B}^+\times \mathbb R^{\{C,D\}}$ and the similar 
cylinders coming from all possible projections to pairs of variables. Defining
$\cal T^+$ to be their intersection, the reasoning above translates into 
$\cal T^+$ being an invariant set for  $T(c_0)$. This is precisely
what we do in the proof of Theorem
\ref{thm:pers}. 

Note, however, that while the previous exposition
sheds some light on the basic idea of the proof, (presented in
section \ref{sec:proofMain}), the technical
details involved are subtle and require an extensive preparation,
which is the object of section \ref{subsec:further}.
In particular, the parameters required  in the construction of the
sets $\cal L^+$ need to take into 
account the geometry of $S(c_0)$ and must be chosen carefully. Lemmas
\ref{lem:fz} and \ref{lem:d} are part of this process. 
\end{subsection}

\begin{subsection}{Further preparation}\label{subsec:further} 
As anticipated in the discussion above, a special case of Theorem 
\ref{thm:pers} follows in a more or less straightforward way from 
Corollary \ref{cor:proj}:

\begin{prop}\label{prop:prin0}
Let $(\cal N,\kappa) = (\cal S,\cal  C ,\cal R, \kappa)$ be a
$\kappa$-variable mass-action system with two-dimensional stoichiometric subspace
and lower-endotactic stoichiometric subnetworks.
If the stoichiometric compatibility class $S(c_0)$ contains the
origin, then $T(c_0)$ is a persistent trajectory.
\end{prop}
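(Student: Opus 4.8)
The plan is to reduce the assertion to Corollary \ref{cor:proj} by projecting the full dynamics onto a cleverly chosen pair of coordinates and recognizing the projected flow as that of a 2D-reduced mass-action system. The hypothesis that $S(c_0)$ contains the origin is precisely what makes this projection exact: $0 \in c_0 + S$ forces $c_0 \in S$, hence $S(c_0) = S \cap \mathbb{R}^n_{\ge 0}$ and $c(t) \in S$ for every $t \ge 0$. Thus throughout the argument the concentration vector is constrained to the two-dimensional subspace $S$ and is completely determined by any two of its coordinates that coordinatize $S$.

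First I would fix the geometry using Section \ref{subsec:prelim}. After the reduction there we may assume $S \cap \mathbb{R}^n_{>0} \neq \emptyset$, so $S(c_0)$ is a pointed two-dimensional cone with apex $F_W$ at the origin ($W = \{1,\ldots,n\}$) and exactly two edges $F_{W^1}, F_{W^2}$, with $W^1 \cap W^2 = \emptyset$ and both nonempty. I would take direction vectors $\mathbf{v}^1(W) = (p_1,\ldots,p_n)$ and $\mathbf{v}^2(W) = (q_1,\ldots,q_n)$ in $\mathbb{R}^n_{\ge 0}$ along these edges, choose $l \in W^2$ and $k \in W^1$, normalize so that $p_l = q_k = 1$ (Remarks \ref{rem:signpq}, \ref{rem:lsmk}), and assemble $\Psi = \Theta = (\mathbf{v}^1(W)\ \mathbf{v}^2(W))$ of the shape (\ref{eq:psi}). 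The two facts I would extract are the identity (\ref{eq:Theta2}), $\mathbf{x} = \Psi\,\pi_{k,l}(\mathbf{x})$ for all $\mathbf{x} \in S$, and the normalization of Remark \ref{rem:a_s} allowing translation vectors $a_s$ with $\pi_{k,l}(a_s) = 0$, so that every source complex satisfies $P = a_s + \Psi\,\pi_{k,l}(P)$.

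Next I would carry out the projection. Writing $\mathbf{y}(t) = \pi_{k,l}(c(t))$ and substituting $c(t) = \Psi \mathbf{y}(t)$ (legitimate because $c(t) \in S$) into (\ref{eq:varMassAct}) turns each monomial $c^P$ into $(\Psi\mathbf{y})^{\Psi \pi_{k,l}(P) + a_s}$, so that $\dot{\mathbf{y}}$ satisfies exactly the 2D-reduced equation (\ref{eq:projVarMassAct}) for the system $\bigcup_{s=1}^p (\pi_{k,l}(\mathcal{N}_s), \Psi, \kappa_s, a_s)$. Two points license this: each projected network $\pi_{k,l}(\mathcal{N}_s)$ is again lower-endotactic by Proposition \ref{prop:transform} (the projection is a linear map sending $\mathrm{aff}_+(\mathcal{N}_s)$ into $\mathbb{R}^2_{\ge 0}$), and by the injectivity of $\pi_{k,l}$ on the slice $a_s + S$ (Remark \ref{rem:injective}, (\ref{eq:preImage})) distinct reactions of $\mathcal{R}_s$ remain distinct and nontrivial after projection, so the reduced rate constants are literally the original $\kappa_{P\to P'}$ and still lie in $(\eta,1/\eta)$. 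Since $T(c_0) \subseteq [0,M]^n$ the projection lies in $[0,M]^2$, and I would build $\mathcal{L}^+$ with $d$ small enough to contain $\mathbf{y}_0 = \pi_{k,l}(c_0) \in \mathbb{R}^2_{>0}$; Corollary \ref{cor:proj} then yields $\mathbf{y}(t) \in \mathcal{L}^+$ for all $t$.

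Finally I would transport persistence from $\mathbf{y}$ to $c$. Because $\mathcal{L}^+$ is constructed to stay a fixed distance $d > 0$ from both coordinate axes, we have $c_l(t), c_k(t) \ge d$ for all $t$. The recovery identity $x_i(t) = c_i(t) = p_i\,c_l(t) + q_i\,c_k(t)$, with $p_i, q_i \ge 0$ never both zero, then gives $x_i(t) \ge \max(p_i,q_i)\,d > 0$ for every $i$, whence $\liminf_{t\to\infty} x_i(t) > 0$ and $T(c_0)$ is persistent. The one genuinely delicate step is the faithfulness of the projection—that no two reactions of a single subnetwork collapse and hence no rate constant is inflated out of $(\eta,1/\eta)$—which rests on the injectivity of $\pi_{k,l}$ on each affine slice $a_s + S$; it is exactly the failure of this faithfulness in general (the projected $\overline{\kappa}_i$ ceasing to be bounded, as in the discussion of Section \ref{sec:glimpse}) that makes the full Theorem \ref{thm:pers} so much harder and prevents this short argument from applying when $S(c_0)$ misses the origin.
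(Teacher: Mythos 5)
Your proposal is correct and follows essentially the same route as the paper's own proof: project onto a pair $(k,l)\in W^1\times W^2$, use $c=\Theta\pi_{k,l}(c)$ and the injectivity of $\pi_{k,l}$ on each slice $a_s+S$ to identify the projected dynamics with the 2D-reduced system $\bigcup_s(\pi_{k,l}(\mathcal N_s),\Psi,\kappa_s,a_s)$ with unchanged rate constants, invoke Proposition \ref{prop:transform} and Corollary \ref{cor:proj}, and recover all coordinates from $x_i=p_ix_l+q_ix_k$. The only cosmetic difference is that you make explicit the lower bound $x_i\ge(p_i+q_i)d$, which the paper leaves implicit in its final sentence.
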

\begin{proof}
We denote $W=\{1,\ldots, n\},$ so that the origin is the vertex $F_W$ of
$S(c_0).$ Let $(k,l)$ be a fixed pair in $W^1\times W^2,$ 
and let ${\bf v}^1(W)=(p_1,\ldots, p_n)$ and ${\bf
  v}^2(W)=(q_1,\ldots, q_n).$ Note that all $p_i$ and $q_i$ may be
chosen to be rational
because $S$ is generated by vectors of integer coordinates.
As explained in Remark \ref{rem:lsmk}, we may assume that $k<l$ and $p_l=q_k=1;$ 
note that we also have 
$p_i\ge 0,\ q_i\ge 0$ for all $i\in\{1,\ldots, n\}$
from (\ref{eq:signpq}).

Let $\mathcal N_s=(\mathcal S, \mathcal C_s, \mathcal R_s), \
s\in\{1,\ldots, p\}$ denote the stoichiometric subnetworks of $\cal N$. 
If $\Psi$ denotes the matrix with columns ${\bf v}^1(W)$ and ${\bf
  v}^2(W),$ then $\Psi$ has the form (\ref{eq:psi}). 
As explained in Remark \ref{rem:a_s},  
for each $s\in\{1,\ldots, p\}$ we may choose 
$a_s\in\mathbb R^n$ such that $\mathcal C_s\subset S+a_s$ and
$\pi_{k,l}(a_s)=(0,0).$
Since $c(t)\in S$ for all $t\ge 0$ we have
\begin{equation}\label{eq:prinzero}
c(t)=\Psi \pi_{k,l}(c(t))
\end{equation}
by (\ref{eq:Thetapi}) Remark \ref{rem:lsmk}. 
We have
$$\frac{d}{dt} c(t)=\sum_{s=1}^p\sum_{P\to P'\in {\mathcal
    R}_s}\kappa_{P\to P'}c(t)^P(P'-P)$$
and since, as implied by (\ref{eq:preImage}) Remark \ref{rem:injective},
the only reaction in ${\cal R}_s$ that projects onto $Q\to Q'$ via $\pi_{k,l}$
is $\Psi Q+a_s\to \Psi Q'+a_s,$ it follows that  
$$\frac{d}{dt} \pi_{k,l}(c(t))=\sum_{s=1}^p\sum_{Q\to Q'\in
  \pi_{k,l}({\mathcal R}_s)}
\overline{\kappa}_{s,Q\to Q'}(t)(\Psi\pi_{k,l}c(t))^{\Psi Q+a_s}(Q'-Q),$$
where we denoted
$$\overline{\kappa}_{s, Q\to Q'}(t)=\kappa_{\Psi Q+a_s\to \Psi
  Q'+a_s}(t)$$
for all $Q\to Q'\in\pi_{k,l}({\cal R}_s)$.
Therefore $\pi_{k,l}(T(c_0))$ is the trajectory of the 2D-reduced mass-action system 
$\bigcup_{s=1}^p(\pi_{k,l}(\mathcal N_s), \Psi, \overline{\kappa}_s, a_s)$
with initial condition $\pi_{k,l}(c_0)$.

Since all $\pi_{k,l}(\mathcal N_s)$ are lower-endotactic by Proposition
\ref{prop:transform}, Corollary \ref{cor:proj} implies that
coordinates $x_l(t)$ and $x_k(t)$ of $c(t)$  
stay larger than some $d>0$ for all $t\ge 0.$ 
Finally, (\ref{eq:prinzero}) implies that all the coordinates of
$c(t)$ remain bounded away from zero.
\end{proof}

The special case of Theorem \ref{thm:pers} contained in Proposition
  \ref{prop:prin0} illustrates  well how projected systems, 2D-reduced 
mass-action systems  and Theorem \ref{thm:proj} come into play.  
As anticipated in section \ref{sec:glimpse}, the general case requires
yet a little more preparation, which we discuss next. 

Fix a bounded trajectory  $T(c_0)$ of $(\cal N, \kappa)$ and  let
$M>1$ be such that $T(c_0)\subset [0,M]^n.$
As hinted in section \ref{sec:glimpse}, we construct a polyhedron
$\mathcal T^+\subset \mathbb R^n_{> 0}$ that stays away from
$\partial\mathbb R^n_{\ge 0}$ and such that
$T(c_0)\subset \mathcal T^+;$ in the process we use the tools we have
developed thus far. Namely, we project $(\mathcal N, \kappa)$ onto
well-chosen sets of two variables, we cast the projected system as a
2D-reduced mass-action system and we construct a corresponding $\mathcal L^+$ set in each such
two-dimensional face of $\mathbb R^n_{\ge 0}.$ Finally, we
construct certain cylinders out of the $\mathcal L^+$ sets and define $\mathcal T^+$
as the intersection of these cylinders. 

The projections to consider are of the form $\pi_{k,l}=\pi_{\{k,l\}}$
with $(k,l)\in W^1\times W^2$ for all vertices $F_W$ of $S(c_0).$

We fix a vertex $F_W$ and a pair $(k,l)\in W^1\times W^2$.
Based on Remark \ref{rem:lsmk}, if ${\bf v}^1(W)=(p_1,\ldots, p_n)$ 
and ${\bf v}^2(W)=(q_1,\ldots, q_n)$ then we may
assume that $l<k$ and that $p_l=q_k=1.$ 
Note that if $(k,l)\in W^1\times W^2$ then there is no other vertex  $F_V$ of $S(c_0)$ 
such that $(k,l)\in V^1\times V^2.$
Indeed, if $F_V=(f_1,\ldots, f_n)$ then we have
$F_V-F_W=f_l{\bf v}^1(W)+f_k{\bf v}^2(W).$
Since $k,l\in V_1\cup V_2\subseteq V,$ we have $f_l=f_k=0$ and so $F_V=F_W.$

Recall that  ${\cal N}_s=({\cal S}, {\cal C}_s, {\cal R}_s)$ denote the
stoichiometric subnetworks of $\cal N$ and $S$ denotes the
stoichiometric subspace of $\cal N.$ As in Remark  \ref{rem:a_s}, for
each $s\in\{1,\ldots, p\},$ let $a_s\in\mathbb R^n$ such that ${\cal
C}_s\subset a_s+S.$
Let 
$$\Theta = ({\bf v}^1(W)\quad {\bf v}^1(W))$$
be the matrix with columns ${\bf v}^1(W)$ and ${\bf v}^2(W)$
and define
\begin{equation}\label{eq:psikl}
\Psi_{k,l}=\pi_W\Theta =
(\pi_W({\bf v}^2(W)) \quad \pi_W({\bf v}^2(W)))
\end{equation}
to be the matrix with columns $\pi_W({\bf v}^1(W))$ and  $\pi_W({\bf v}^2(W)).$
Note that  $p_i$ and $q_i$ are non-negative for all $i\in W$ and
moreover, they are rational numbers
since the stoichiometric subspace $S$ of $\cal N$ is generated by vectors of integer coordinates. Therefore
$\Psi_{k,l}$ is of the form (\ref{eq:psi}).

Since, by Proposition
\ref{prop:transform}, $\pi_{k,l}(\mathcal N_s)$ is lower-endotactic 
for all $s\in\{1,\ldots, p\},$ we may construct the set 
\begin{equation}\label{eq:lkl}
\mathcal L^+_{k,l}=\mathcal L^+_{k,l}(\{\pi_{k,l}(\mathcal N_s)\}_{1\le s\le p}, \Psi_{k,l},
\eta', M)\subset Z_{\complement \{k,l\}}
\end{equation}
as in section \ref{sec:constr} such that $\pi_{k,l}(c_0)\in\mathcal L^+_{k,l}.$
We will choose $\eta'$ in what follows;
also, we will take advantage of the flexibility in
the construction of $\mathcal L^+_{k,l}$ to equip this set with a few
useful technical properties. 

We start with two lemmas which show the intuitively clear facts that
if a point of $S(c_0)$ is close to $Z_W$ then it is also close to $F_W,$
and that if some components of a point in $S(c_0)$ are small, then the 
point is close to a face where all those components are zero. 

\begin{lemma}\label{lem:fz}
Let $F_W$ be a face of $S(c_0).$ Then there exists  $\delta>0$ such that 
$\mathrm{dist}({\bf x}, F_W)\le \delta \mathrm{dist}({\bf x}, Z_W)$
for all ${\bf x}\in S(c_0).$
\end{lemma}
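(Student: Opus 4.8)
The plan is to replace the distance to the subspace $Z_W$ by a linear functional and then exploit the local conical structure of the polygon $S(c_0)$ at the face $F_W$. First I would note that, since $S(c_0)\subset\mathbb R^n_{\ge 0}$, the orthogonal projection onto $Z_W$ merely zeroes the coordinates indexed by $W$, so $\mathrm{dist}(\mathbf x,Z_W)=\|\pi_W(\mathbf x)\|$. As these coordinates are nonnegative on $S(c_0)$, Cauchy--Schwarz gives $\ell(\mathbf x):=\sum_{i\in W}x_i\le\sqrt{|W|}\,\|\pi_W(\mathbf x)\|=\sqrt{|W|}\,\mathrm{dist}(\mathbf x,Z_W)$, where $\ell$ is a \emph{linear}, nonnegative functional on $S(c_0)$. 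Hence it suffices to produce $\delta'>0$ with $\mathrm{dist}(\mathbf x,F_W)\le\delta'\ell(\mathbf x)$ on $S(c_0)$; the lemma then holds with $\delta=\delta'\sqrt{|W|}$. Note that, as established just before the lemma, $F_W=S(c_0)\cap Z_W$, so $\ell$ vanishes on $S(c_0)$ precisely on $F_W$.

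The essential case is that $F_W$ is a vertex, say $F_W=\{\mathbf p\}$ with $\mathbf p\in Z_W$. Here I would use the defining property of ${\bf v}^1(W),{\bf v}^2(W)$: every $\mathbf x\in S(c_0)$ can be written as $\mathbf x=\mathbf p+a\,{\bf v}^1(W)+b\,{\bf v}^2(W)$ with $a,b\ge 0$, since the cone at $\mathbf p$ generated by the two adjacent edges contains $S(c_0)$. Writing ${\bf v}^1(W)=(p_i)$ and ${\bf v}^2(W)=(q_i)$, the $W$-coordinates of $\mathbf p$ vanish, so $x_i=ap_i+bq_i$ for $i\in W$ and thus $\ell(\mathbf x)=a\sigma_1+b\sigma_2$, where $\sigma_1=\sum_{i\in W}p_i$ and $\sigma_2=\sum_{i\in W}q_i$. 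By Remark \ref{rem:signpq}, $p_i>0$ for $i\in W\setminus W^1$ and $q_i>0$ for $i\in W\setminus W^2$, and since $W^1,W^2\subsetneq W$ this forces $\sigma_1,\sigma_2>0$. On the other hand $\mathrm{dist}(\mathbf x,F_W)=\|\mathbf x-\mathbf p\|\le a\|{\bf v}^1(W)\|+b\|{\bf v}^2(W)\|$, and the elementary inequality $aA+bB\le\max(A/\sigma_1,B/\sigma_2)\,(a\sigma_1+b\sigma_2)$ (valid for $a,b\ge 0$ and $\sigma_1,\sigma_2>0$) then yields the claim with $\delta'=\max\{\|{\bf v}^1(W)\|/\sigma_1,\ \|{\bf v}^2(W)\|/\sigma_2\}$.

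If $F_W$ is an edge, I would run the same argument anchored at a vertex $\mathbf p$ of $F_W$, decomposing $\mathbf x=\mathbf p+a\mathbf u+b\mathbf v$ where $\mathbf u$ spans $F_W$ and $\mathbf v$ is the direction of the other edge at $\mathbf p$: one has $a,b\ge 0$, the $W$-coordinates of $\mathbf u$ vanish (the edge stays in $Z_W$), and $\sum_{i\in W}v_i>0$ (otherwise the second edge would also lie in $Z_W$, contradicting $F_W=S(c_0)\cap Z_W$), so that $\ell(\mathbf x)=b\sum_{i\in W}v_i$ controls the transverse displacement $b\|\mathbf v\|\ge\mathrm{dist}(\mathbf x,F_W)$. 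The one subtlety, which I expect to be the main obstacle, is that when $F_W$ is a bounded segment the foot $\mathbf p+a\mathbf u$ may overshoot the far endpoint, so the nearest point of $F_W$ is that endpoint rather than $\mathbf p+a\mathbf u$; this is handled by anchoring instead at whichever endpoint of $F_W$ is closer, or, uniformly and for a general $S(c_0)$ regardless of dimension, by invoking Hoffman's error bound for the linear inequality system defining $S(c_0)$ augmented with $\ell\le 0$, which delivers exactly $\mathrm{dist}(\mathbf x,F_W)\le\gamma\,\ell(\mathbf x)$ for $\mathbf x\in S(c_0)$. The only remaining ``face'' is the full two-dimensional polygon, for which both sides of the asserted inequality vanish and there is nothing to prove.
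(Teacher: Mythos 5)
Your proof is correct, and it takes a genuinely different route from the paper's. The paper argues geometrically: writing $\mathbf{p}_F$ and $\mathbf{p}_Z$ for the orthogonal projections of $\mathbf{x}$ onto $F_W$ and $Z_W$, it bounds the angle $\measuredangle(\mathbf{x}\,\mathbf{p}_F\,\mathbf{p}_Z)$ away from zero by a compactness argument on the infimum of these angles over $S(c_0)$, treating $\dim S(c_0)=1$ and $\dim S(c_0)=2$ separately. You instead replace $\mathrm{dist}(\cdot,Z_W)$ by the linear functional $\ell(\mathbf{x})=\sum_{i\in W}x_i$ and exploit the conical decomposition $\mathbf{x}=\mathbf{p}+a\,\mathbf{v}^1(W)+b\,\mathbf{v}^2(W)$ together with the sign information of Remark \ref{rem:signpq}; this buys explicit constants in terms of the edge directions (which mesh with the notation used later in Section \ref{sec:pers}), avoids the attainment-of-infimum issue on an unbounded $S(c_0)$, and, in the Hoffman form ($F_W=S(c_0)\cap\{x_i\le 0,\ i\in W\}$, hence $\mathrm{dist}(\mathbf{x},F_W)\le\gamma\,\|\pi_W(\mathbf{x})\|=\gamma\,\mathrm{dist}(\mathbf{x},Z_W)$ for $\mathbf{x}\in S(c_0)$), works in any dimension and codimension with no case analysis at all. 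The one soft spot is the edge case: the two oblique feet $\mathbf{p}+a\mathbf{u}$ and $\mathbf{q}-a'\mathbf{u}$ can \emph{both} overshoot the segment $F_W=[\mathbf{p},\mathbf{q}]$ simultaneously (e.g., a flat trapezoid with steeply splayed side edges), so ``anchor at the closer endpoint'' as literally stated does not quite resolve it; the correct dichotomy is on where the orthogonal projection of $\mathbf{x}$ onto $\mathrm{aff}(F_W)$ lands --- if inside $F_W$ then $\mathrm{dist}(\mathbf{x},F_W)=\mathrm{dist}(\mathbf{x},\mathrm{aff}(F_W))\le b\|\mathbf{v}\|$ from either anchor, and if beyond $\mathbf{q}$ then anchoring at $\mathbf{q}$ gives $\|\mathbf{x}-\mathbf{q}\|^2=(a'\mathbf{u}'+b'\mathbf{v}')\cdot(\mathbf{x}-\mathbf{q})\le b'\|\mathbf{v}'\|\,\|\mathbf{x}-\mathbf{q}\|$ since $\mathbf{u}'\cdot(\mathbf{x}-\mathbf{q})\le 0$. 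This is a small repair, and in any case your Hoffman fallback already closes the gap rigorously.
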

\begin{proof}
If $S(c_0)$ has dimension one, we denote by $\alpha$ the angle between $S(c_0)$
and $Z_W.$ Since $S(c_0)$ intersects the positive orthant we have
$\alpha>0$ and therefore $0<\sec\alpha<\infty.$ Then
$\mathrm{dist}({\bf x}, F_W)= \delta \mathrm{dist}({\bf x},
Z_W)$ for all ${\bf x}\in S(c_0),$ where $\delta = \sec\alpha.$ If
$S(c_0)$ has dimension two, for ${\bf x}\in S(c_0)$ we let ${\bf p}_F$ and ${\bf p}_Z$
denote the projections of $\bf x$ on
$F_W$ and $Z_W.$ Let $\alpha=\inf_{{\bf x}\in S(c_0)} \measuredangle ({\bf x}{\bf p}_F {\bf
  p}_Z).$ If $\alpha=0,$ then, since $S(c_0)$ is closed, there
exists ${\bf x}_0\in S(c_0)\backslash F_W$ such that 
$\measuredangle ({\bf x}_0{\bf p}_F {\bf p}_Z)=0.$ In turn, this 
implies that ${\bf p}_F={\bf p}_Z$ and therefore $S(c_0)$ contains the
line segment  ${\bf x}_0{\bf p}_F$ which is
perpendicular to $Z_W.$ It follows 
that  $\mathrm{dist}({\bf x}, F_W)=\mathrm{dist}({\bf x}, Z_W)$ for
all ${\bf x}\in S(c_0).$ If $\alpha>0$
then we let $\delta = \sec\alpha.$ For ${\bf x}\in S(c_0)$ we have 
$\mathrm{dist}({\bf x}, F_W)=\sec\measuredangle ({\bf x}{\bf p}_F
{\bf p}_Z)\mathrm{dist}({\bf x}, Z_W)\le\delta\mathrm{dist}({\bf x}, Z_W)$ for all ${\bf x}\in S(c_0).$  
\end{proof}

\begin{lemma}\label{lem:d}
There exists $\lambda>0$ such that if $I\subset \{1,\ldots,n\}$
and ${\bf d}=(d_1,\ldots,d_n)\in S(c_0)$ is such that $d_i<\lambda$ for $i\in
I,$ then for some face $F_W$ of $S(c_0)$ we have $I\subset W.$
\end{lemma}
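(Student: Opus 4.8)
The plan is to reduce the statement to a finite, purely convex-geometric fact about the polyhedron $S(c_0)$. First I would record the elementary equivalence that there is a face $F_W$ of $S(c_0)$ with $I\subseteq W$ if and only if $Z_I\cap S(c_0)\neq\O$: any such face lies in $Z_W\subseteq Z_I$ and is nonempty, while conversely a point ${\bf x}\in Z_I\cap S(c_0)$ lies in the face $Z_{W}\cap S(c_0)$ with $W=\{i\mid x_i=0\}\supseteq I$, which equals $F_{W'}$ for the maximal $W'\supseteq W$. Call $I$ \emph{bad} if $Z_I\cap S(c_0)=\O$. Since there are only finitely many subsets $I\subseteq\{1,\ldots,n\}$, it suffices to show, for each bad $I$, that $\lambda_I:=\inf_{{\bf x}\in S(c_0)}\max_{i\in I}x_i>0$, and then set $\lambda=\min\{\lambda_I\mid I\text{ bad}\}$ (taking $\lambda=1$ if no bad $I$ exists). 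Indeed, if $d_i<\lambda$ for all $i\in I$ then $\max_{i\in I}d_i<\lambda\le\lambda_I$ is impossible for a bad $I$, so $I$ is not bad and a face $F_W\supseteq I$ exists.

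The main obstacle is that $S(c_0)$ may be unbounded, so a naive compactness argument on $S(c_0)$ itself fails. I would neutralize this using the Minkowski--Weyl decomposition. Since $S(c_0)\subseteq\mathbb R^n_{\ge 0}$ contains no line, it is pointed, and hence $S(c_0)=\mathrm{conv}(v_1,\ldots,v_m)+C$, where $v_1,\ldots,v_m$ are its finitely many vertices and $C=S\cap\mathbb R^n_{\ge 0}$ is its recession cone. Crucially, every vector of $C$ has nonnegative coordinates, so writing ${\bf x}=\sum_j\theta_j v_j+{\bf r}$ with $\theta_j\ge 0$, $\sum_j\theta_j=1$ and ${\bf r}\in C$ gives $x_i\ge\sum_j\theta_j(v_j)_i$ for every $i$. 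Taking the infimum over ${\bf x}\in S(c_0)$ yields
\[
\lambda_I\ \ge\ \min\Bigl\{\max_{i\in I}\,\textstyle\sum_{j}\theta_j(v_j)_i \ \Bigm|\ \theta_j\ge 0,\ \textstyle\sum_j\theta_j=1\Bigr\},
\]
and the right-hand side is the minimum of a continuous function over the compact standard simplex, hence attained at some $\theta^\ast$.

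It remains to see this minimum is strictly positive. If it were $0$, then $\max_{i\in I}\sum_j\theta^\ast_j(v_j)_i=0$, so $\sum_j\theta^\ast_j(v_j)_i=0$ for every $i\in I$; since all terms are nonnegative, $(v_j)_i=0$ whenever $\theta^\ast_j>0$ and $i\in I$. Picking any $j$ with $\theta^\ast_j>0$, the vertex $v_j$ then satisfies $(v_j)_i=0$ for all $i\in I$, i.e.\ $v_j\in Z_I\cap S(c_0)$, contradicting the badness of $I$. Thus $\lambda_I>0$ for every bad $I$, which completes the proof. I expect the only genuinely delicate point to be the unbounded case, and this is exactly where the inclusion $C\subseteq\mathbb R^n_{\ge 0}$ does the work: it lets us discard the recession component and reduce to the compact simplex of convex combinations of vertices, where positivity follows because the finitely many vertices all avoid $Z_I$.
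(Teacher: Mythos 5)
Your proof is correct, but it takes a genuinely different route from the paper's. The paper defines $m(J)=\inf_{{\bf x}\in S(c_0)}\mathrm{dist}({\bf x},Z_J)$ for every $J\subseteq\{1,\ldots,n\}$, sets $\lambda=\min\{m(J)\mid m(J)>0\}/\sqrt n$ (after first disposing of the case where the origin is a face of $S(c_0)$), and then notes that $d_i<\lambda$ for all $i\in I$ forces $m(I)^2\le\sum_{i\in I}d_i^2<n\lambda^2$, hence $m(I)=0$, ``and the conclusion follows.'' That final step silently uses the fact that a closed polyhedron and an affine subspace at distance zero must actually intersect --- true for polyhedra, false for general closed convex sets, and proved in essence by the very Minkowski--Weyl decomposition you invoke. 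So your argument makes explicit exactly the point the paper leaves implicit: you replace the distance functional by $\max_{i\in I}x_i$, discard the recession cone using $S\cap\mathbb R^n_{\ge 0}\subseteq\mathbb R^n_{\ge 0}$, and reduce to the compact simplex of convex combinations of the (finitely many) vertices, where strict positivity of the infimum for a ``bad'' $I$ follows because each vertex avoids $Z_I$. The paper's version is shorter and uniform over all $J$ at once; yours is self-contained on the one delicate issue (possible unboundedness of $S(c_0)$) at the cost of explicitly invoking the vertices-plus-recession-cone structure, and your reduction of the conclusion to the statement $Z_I\cap S(c_0)\neq\emptyset$ is also a cleaner way to connect to the definition of the faces $F_W$.
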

\begin{proof}
If the origin is a face of $S(c_0)$ the claim in the lemma is clearly
true (any positive value for $\lambda$ will do).
Otherwise, for $J\subseteq \{1,\ldots,n\}$ we define $m(J)=\inf_{{\bf x}\in
  S(c_0)}\mathrm{dist}({\bf x}, Z_J)$ and we let 
$\lambda=\min\{m(J)\mid J\subseteq\{1,\ldots,n\},m(J)>0\}/\sqrt n$. 
We have $\lambda>0$ and 
$$m(I)^2\leq \mathrm{dist}({\bf d},Z_I)^2=\sum_{i\in I} d_i^2<n\lambda^2,$$
which shows that $m(I)=0$ and the conclusion follows. 
\end{proof}

In view of Proposition \ref{prop:prin0} we may assume that the origin
is not a vertex of $S(c_0)$. Let $v^{min}$ denote the smallest nonzero
coordinate of a vertex of $S(c_0)$ and, fixing a $\lambda$ given by Lemma
\ref{lem:d}, let 
\begin{equation}\label{eq:zeta}
\zeta = \min\{\lambda, v^{min}/2, 1\}.
\end{equation}
Moreover, let ${\bf 1}=(1,\ldots, 1)\in\mathbb R^n, 
\ E = \displaystyle\max_{P\in\mathcal{SC(N)}}(P\cdot {\bf 1})$ and define
\begin{equation}\label{eq:eta'}
\eta' =\min\left\{
\eta \zeta^E, \frac{\eta}{M^E}
\right\}.
\end{equation}
Recall from section \ref{sec:constr} that the construction of a set $\mathcal
L^+$ depends on the numbers $\xi$ and $d.$
Also recall that $\xi$ may be chosen arbitrarily small; once $\xi$
is fixed, $d$ may also be made small independently of the value of
$\xi.$ Since there are finitely many pairs 
$(k,l)\in W^1\times W^2$ (counting all vertices $F_W$ of $S(c_0)$) 
we can choose the same values of $\xi$ and $d$ 
in the construction of all sets $\mathcal L^+_{k,l}.$ We fix $\xi$ small
enough such that 
\begin{equation}\label{eq:xiIneq}
\max\Bigg\{
\bigcup_{\substack{{\{F_W \text{ vertex of }S(c_0)}\\{{\bf v}^1(W)=(p_1,\ldots, p_n)}\\{{\bf v}^2(W)=(q_1,\ldots, q_n)\}}}}\{|p_i/p_j|\mid i,j\in \complement W^1\}\cup
\{|q_i/q_j|\mid i,j\in \complement W^2\}
\Bigg\} \xi< \frac{v^{min}}{4}.
\end{equation}
As can be seen from Figure \ref{fig:bigPicture}, the shape of $\mathcal L_{k,l}$ near $(0,0)$ enables us to choose $\epsilon > 0$ such that  
\begin{equation}\label{eq:epsilon}
[0,\epsilon]^2\cap\mathcal L_{k,l}^+=\O \text{ for all vertices
}F_W\text{ of }S(c_0)\text{ and all } (k,l)\in
W^1\times W^2.
\end{equation}
We now choose $d$ such that 
\begin{equation}\label{eq:dIneq}
d< \min\Bigg\{\lambda,\zeta, \epsilon
\bigcup_{\substack{{\{F_W \text{ vertex of }S(c_0)}\\{{\bf v}^1(W)=(p_1,\ldots, p_n)}\\{{\bf v}^2(W)=(q_1,\ldots, q_n)\}}}}(\{p_i/p_j\mid i,j\in W\backslash W^1\}\cup
\{q_i/q_j\mid i,j\in W\backslash W^2\})
\Bigg\},
\end{equation}
\noindent where we recall that $\lambda$ is given by Lemma \ref{lem:d}, $\zeta$ is
defined in (\ref{eq:zeta}) and $\epsilon$ was chosen to satisfy (\ref{eq:epsilon}).

We shift $\mathcal L_{k,l}$ (and $\mathcal L^+_{k,l}$) along $\mathbb
R^{\complement \{k,l\}}$ and define 
$$\mathcal H_{k,l}=\{{\bf x}\in \mathbb R^n \mid (x_k,x_l)\in\mathcal
L_{k,l}\}\text{ and }\mathcal H^+_{k,l}=\{{\bf x}\in \mathbb R^n \mid (x_k,x_l)\in\mathcal
L^+_{k,l}\}.$$
Note that $\mathcal H_{k,l}^+=\mathrm{conv}(\mathcal H_{k,l}).$ Finally, let 
\begin{equation}\label{eq:defL+}
\mathcal T^+ = (d{\bf 1}+\mathbb R_{\geq 0}^n)
\cap\bigcap_{F_W\text{ vertex of }S(c_0) }\bigcap_{(k,l)\in W^1\times W^2}
\mathcal H_{k,l}^+.
\end{equation} 
By definition, the convex polyhedron $\mathcal T^+ $ lies in a positive translation
of the nonnegative orthant.
In view of Theorem \ref{thm:nagumo}, 
we shall be concerned with the behavior of the
flow $\dot c(t)$ on the boundary of $\mathcal T^+$; we conclude the
preparatory discussion with the following lemma, which shows that the
part of $\partial \cal T^+$ that is of interest to us does not include
the boundary of $(d{\bf 1}+\mathbb R_{\ge 0}^n)$, but only the
boundaries of ${\cal H}_{k,l}^+$, for $(k,l)\in W^1\times W^2.$

\begin{lemma}\label{lem:2}
$$\partial\mathcal T^+\cap S(c_0)\subset\bigcup_{F_W\text{ vertex of
  }S(c_0) } \bigcup_{(k,l)\in W^1\times W^2} (\mathcal H_{k,l}\cap S(c_0)).$$
\end{lemma}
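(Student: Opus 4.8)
The plan is to exploit the fact that $\mathcal T^+$ is a finite intersection of the closed convex sets $d{\bf 1}+\mathbb R^n_{\ge 0}$ and the cylinders $\mathcal H^+_{k,l}$. Since a boundary point of an intersection of finitely many closed convex sets must lie on the boundary of at least one of them, any ${\bf x}\in\partial\mathcal T^+\cap S(c_0)$ either lies on some $\partial\mathcal H^+_{k,l}$ or on $\partial(d{\bf 1}+\mathbb R^n_{\ge 0})$. In the first case we are immediately done: $\mathcal H^+_{k,l}=\mathrm{conv}(\mathcal H_{k,l})$ is a cylinder over $\mathcal L^+_{k,l}=\mathrm{conv}(\mathcal L_{k,l})$, and since $\partial\mathcal L^+_{k,l}=\mathcal L_{k,l}$ by construction, we get $\partial\mathcal H^+_{k,l}=\mathcal H_{k,l}$, so ${\bf x}\in\mathcal H_{k,l}\cap S(c_0)$. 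The entire content of the lemma is therefore to handle the remaining possibility, namely that ${\bf x}$ sits on $\partial(d{\bf 1}+\mathbb R^n_{\ge 0})$, i.e. $x_i=d$ for some $i$ while all $x_j\ge d$.

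First I would localize ${\bf x}$ near a vertex. Since $d<\lambda$ by (\ref{eq:dIneq}) and $x_i=d$, Lemma \ref{lem:d} produces a face of $S(c_0)$ whose index set contains $i$; passing to one of its endpoint vertices yields a vertex $F_W$ of $S(c_0)$ with $i\in W$. Writing ${\bf x}-F_W=a\,{\bf v}^1(W)+b\,{\bf v}^2(W)$ with $a,b\ge 0$ (possible because $S(c_0)$ lies in the cone at $F_W$ generated by the two adjacent edges) and recalling the sign pattern (\ref{eq:signpq}), I would split according to whether $i\in W^1$, $i\in W^2$, or $i$ lies in the ``interior'' block $W\setminus(W^1\cup W^2)$; these three possibilities are exhaustive.

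The two edge sub-cases are direct. If $i\in W^1$, choose any $l\in W^2$ and set $k=i$, so that $(k,l)\in W^1\times W^2$ and $x_k=x_i=d$; if $i\in W^2$, symmetrically set $l=i$ and choose $k\in W^1$, so $x_l=d$. In either situation one coordinate of $(x_k,x_l)$ equals $d$. Because both the vertical and the horizontal pieces of $\mathcal L_{k,l}$ are precisely the supporting half-lines of $\mathcal L^+_{k,l}$ at distance $d$ from the two axes, every point of $\mathcal L^+_{k,l}$ with a coordinate equal to $d$ must lie on $\mathcal L_{k,l}$. Since ${\bf x}\in\mathcal T^+\subseteq\mathcal H^+_{k,l}$ gives $(x_k,x_l)\in\mathcal L^+_{k,l}$, I conclude $(x_k,x_l)\in\mathcal L_{k,l}$, that is ${\bf x}\in\mathcal H_{k,l}\cap S(c_0)$.

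The main obstacle is the interior sub-case $i\in W\setminus(W^1\cup W^2)$, where $x_i$ is not a coordinate seen by any admissible projection pair, so $x_i=d$ yields no half-line membership directly; I would show this sub-case is in fact impossible. Here $p_i,q_i>0$ and $x_i=a p_i+b q_i=d$, whence $a p_i\le d$ and $b q_i\le d$. Fixing any $(k,l)\in W^1\times W^2$ and using $q_l=p_k=0$, I compute $x_l=a p_l\le d\,(p_l/p_i)$ and $x_k=b q_k\le d\,(q_k/q_i)$, where $i,l\in W\setminus W^1$ and $i,k\in W\setminus W^2$. The crucial point is that the ratio inequalities built into (\ref{eq:dIneq}) give $d<\epsilon\,(p_i/p_l)$ and $d<\epsilon\,(q_i/q_k)$, so that $x_l<\epsilon$ and $x_k<\epsilon$; thus $(x_k,x_l)\in[0,\epsilon]^2$. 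This contradicts $(x_k,x_l)\in\mathcal L^+_{k,l}$, because (\ref{eq:epsilon}) was arranged exactly so that $[0,\epsilon]^2\cap\mathcal L^+_{k,l}=\emptyset$. Hence the interior sub-case cannot occur for ${\bf x}\in\mathcal T^+$, and the three cases together establish the claim. I expect the only genuinely delicate point to be verifying that the constants $\xi$, $\epsilon$, $d$ chosen in section~\ref{sec:constr} do line up as stated; this is precisely the role of the inequalities (\ref{eq:xiIneq}), (\ref{eq:dIneq}) and of the choice (\ref{eq:epsilon}).
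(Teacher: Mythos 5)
Your proposal is correct and follows essentially the same route as the paper's proof: reduce to the case $x_i=d$, invoke Lemma \ref{lem:d} to locate a vertex $F_W$ with $i\in W$, rule out $i\in W\setminus(W^1\cup W^2)$ by showing $(x_k,x_l)\in[0,\epsilon]^2$ via the ratio bounds in (\ref{eq:dIneq}) and the separation condition (\ref{eq:epsilon}), and then place ${\bf x}$ on the infinite part of $\mathcal L_{k,l}$ when $i\in W^1\cup W^2$. The only cosmetic difference is that you make explicit the identities $a=x_l$, $b=x_k$ and the fact that $\partial\mathcal H^+_{k,l}=\mathcal H_{k,l}$, which the paper uses implicitly.
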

\begin{proof}
We have 
$$\partial\mathcal T^+\subset \bigcup_{i=1}^n (d{\bf 1}+Z_{\{i\}}) \cup\bigcup_{F_W\text{ vertex of }S(c_0)}\bigcup_{(k,l)\in W^1\times W^2} \mathcal H_{k,l}.$$
Suppose ${\bf x}\in\partial\mathcal T^+\cap S(c_0)$ and ${\bf x}\in
d{\bf 1}+Z_{\{i\}}$. Since
$d<\lambda$ (see (\ref{eq:dIneq})), Lemma \ref{lem:d} implies that
there exists a face $F_W$ such that $i\in W.$ Possibly making $W$ larger,  
we can assume that  $F_W$ is a vertex of $S(c_0).$ 
Now we show that $i\in W^1\cup W^2;$  
suppose this was false and let $(k,l)\in W^1\times W^2.$ Assume that
$l<k$ (otherwise swap $W^1$ with $W^2$) and let ${\bf
  v}^1(W)=(p_1,\ldots, p_n)$ and ${\bf v}^2(W)=(q_1,\ldots, q_n)$
where $p_l=q_k=1.$ Since $i\in W\backslash( W^1\cup
W^2),$ both $p_i$ and $q_i$ are strictly positive, as explained in
Remark \ref{rem:signpq}. Using (\ref{eq:dIneq})
we obtain
$$\max\{x_lp_i, x_kq_i\} < x_lp_i+x_kq_i= x_i = d \le
\min\{(q_i/q_k)\epsilon,(p_i/p_l)\epsilon\}
= \min\{q_i\epsilon,p_i\epsilon\}$$
and so $(x_l, x_k)\in[0,\epsilon]^2.$ This, together with (\ref{eq:epsilon}) implies
that ${{\bf x}\notin \mathcal H_{k,l}^+}$ and therefore $\bf x\notin \mathcal T^+$, contradiction.
We conclude that $i\in W^1\cup W^2.$ Suppose $i\in W^1,$ and let $j\in W^2.$ We have 
$${\bf x}\in \mathcal H_{i,j}^+\cap (d{\bf 1}+Z_{\{i\}})\subset  \mathcal
H_{i,j}.$$   
\end{proof}
\end{subsection}

\begin{subsection}{Putting things together} \label{sec:proofMain}
We are ready to prove our main persistence result, Theorem
\ref{thm:pers}. We keep using the notations introduced thus far in
this section; in particular, if $F_W$ is a vertex of
$S(c_0)$ and $(k,l)\in W^1\times W^2$ we assume that $l<k,$ we
denote ${\bf v}^1(W)=(p_1,\ldots, p_n), {\bf v}^2(W)=(q_1,\ldots,
q_n)$ and we set $p_l=q_k=1.$

\smallskip

{\em Proof of Theorem \ref{thm:pers}}.
For any vertex $F_W$ of $S(c_0)$ and for any pair $(k,l)\in W^1\times W^2$
we have by construction $c_0\in\mathcal H^+_{k,l},$
and therefore $c_0\in \mathcal T^+.$ We will use Theorem
\ref{thm:nagumo} to show that
$T(c_0)\subset \mathcal T^+.$
Suppose $t\ge 0$ is such
that ${\bf x}=c(t)\in \partial\mathcal T^+;$ it is enough to show that 
\begin{equation}\label{eq:todo}
{\bf n}\cdot \dot c(t)\ge 0 
\end{equation}
for all ${\bf n}\in -N_{\mathcal T^+}(\bf x).$
Lemma \ref{lem:2} implies the existence of a vertex 
$F_W=(f_1,\ldots, f_n)$ of $S(c_0)$ and the existence of a pair $(k,l)\in W^1\times W^2$ such
that ${\bf x}\in  \mathcal H_{k,l}.$ The generators of the convex cone $-N_{\mathcal T^+}(\bf x)$ lie in the union
of $-N_{\mathcal H^+_{k,l}}(\bf x)$  for all pairs $(k,l)$ such that ${\bf x}\in
\mathcal H_{k,l},$ thus (\ref{eq:todo}) needs only be verified for vectors
$\bf n$ belonging to this union. Therefore we fix such a pair $(k,l)$ and we
let ${\bf n}\in -N_{\mathcal H^+_{k,l}}(\bf x)$. Since only the coordinates
$k$ and $l$ of ${\bf n}$ are nonzero, inequality  (\ref{eq:todo}) is equivalent to 
\begin{equation}\label{eq:toProve2}
\pi_{k,l}({\bf n})\cdot\pi_{k,l}(\dot c(t))\ge 0.
\end{equation}
According to Remark \ref{rem:a_s}, for each $s\in\{1,\ldots, p\}$ we may choose 
$a_s\in\mathbb R^n$ such that $\mathcal C_s\subset S+a_s$ and
$\pi_{k,l}(a_s)=(0,0).$

{\em Case (i).} Suppose $\pi_{k,l}({\bf x})$ lies on the
finite part of $\mathcal L_{k,l}.$ 
For $c\in T(c_0)$ we have $c-F_W\in S$ and we may write, using
Remark \ref{rem:lsmk} and the definition (\ref{eq:psikl}) of
$\Psi_{k,l}:$
$$\pi_W(c)=\pi_W(c-F_W)=\pi_W(\Theta \pi_{k,l}({c-F_W}))
=\Psi_{k,l} \pi_{k,l}(c-F_W)=\Psi_{k,l} \pi_{k,l}(c).$$

Therefore 
$\pi_W(c)=\Psi_{k,l}\pi_{k,l}(c)$ for any $c\in T(c_0).$ 
On the other hand, according to Remark \ref{rem:injective},
the only reaction in ${\cal R}_s$ mapped by $\pi_{k,l}$
 to $Q\to Q'\in \pi_{k,l}({\cal R}_s)$ is $\Theta Q+a_s\to \Theta Q'+a_s.$
It follows that  the dynamics of $(\mathcal N, \kappa)$ projected onto
$\{k,l\}$ may be written
\begin{eqnarray}\label{eq:projected}
&&\frac{d}{dt}\pi_{k,l}(c)=\sum_{s=1}^p\sum_{P\to P'\in\mathcal
  R_s}\kappa_{P\to P'}(t) \pi_{\complement
  W}(c)^{\pi_{\complement W} (P)}\pi_W(c)^{\pi_W(P)}
(\pi_{k,l}(P')-\pi_{k,l}(P))\nonumber
\\\nonumber
&=&
\sum_{s=1}^p\sum_{Q\to Q'\in\pi_{k,l}(\mathcal
  R_s)} \kappa_{\Theta Q+a_s\to \Theta Q'+a_s}(t) \pi_{\complement
  W}(c)^{\pi_{\complement W} 
(\Theta Q+a_s)}
\pi_W(c)^{\pi_W(\Theta Q+a_s)} (Q'-Q)\\
&=&
\sum_{s=1}^p\sum_{Q\to Q'\in\pi_{k,l}(\mathcal
  R_s)} \overline{\kappa}_{s,Q\to Q'}(t) 
(\Psi_{k,l}\pi_{k,l}(c))^{\Psi_{k,l}Q+a_s} (Q'-Q)
\end{eqnarray}
where 
\begin{equation}\label{eq:kbar}
\overline{\kappa}_{s, Q\to Q'}(t)
=\kappa_{\Theta Q+a_s\to \Theta Q'+a_s}(t) \pi_{\complement
  W}(c(t))^{\pi_{\complement W}(\Theta Q+a_s)}
\text{ for all } Q\to Q'\in\pi_{k,l} ({\cal R}_s).
\end{equation}

Therefore the system of differential equations (\ref{eq:projected}) is the
2D-reduced mass-action system 
$\bigcup_{s=1}^p(\pi_{k,l}(\mathcal N_s),\Psi_{k,l}, \overline \kappa_s, a_s).$ 

For all $i\in\complement W$ we have (recall (\ref{eq:ThetapiSc0})):
$$x_i = f_i+x_l p_i+x_k q_i \ge f_i-|p_i|x_l-|q_i|x_k
=f_i-|p_i/p_l|x_l-|q_i/q_k|x_k.$$
Moreover, since $\pi_{k,l}(\bf x)$ lies on the finite part of $\mathcal L_{k,l}$ we have
$x_k,x_l<\xi.$ This, together with (\ref{eq:xiIneq}) implies, for any
$i\in\complement W:$ 
$$f_i-|p_i/p_l|x_l-|q_i/q_k|x_k\ge
f_i-(|p_i/p_l|+|q_i/q_k|)\xi\ge 
f_i-v^{min}/2 \ge v^{min}/2.$$

Therefore, for all $i\in\complement W$ we have 
$$x_i\ge  f_i-v^{min}/2 \ge v^{min}/2$$
(recall that $v^{min}$ denotes the minimum of nonzero coordinates of $F_W$).
It then follows from (\ref{eq:zeta}) that $x_i\ge \zeta$ for any
$i\in\complement W.$ 
This yields 
$$M^{(\Theta Q+a_s)\cdot \bf 1}\ge \pi_{\complement
  W}({\bf x})^{\pi_{\complement W} (\Theta Q+a_s)}\ge \zeta^{(\Theta Q+a_s)\cdot \bf
  1}
\text{ for any } Q\to Q'\in\pi_{k,l}({\cal R}_s);$$
recalling (\ref{eq:kbar}) and using  (\ref{eq:eta'}), we then have
$\overline{\kappa}_{s,Q\to Q'}(t)\in (\eta',1/\eta')$.
Since $\pi_{k,l}({\bf n})\in -N_{\mathcal L^+_{k,l}}(\pi_{k,l}(\bf x))$
and $\mathcal L^+_{k,l}=\mathcal L^+_{k,l}(\{\pi_{k,l}(\mathcal N_s)\}_{1\le s\le p}, \Psi_{k,l},
\eta', M),$ Theorem \ref{thm:proj}
implies (\ref{eq:toProve2}).

{\em Case (ii).} Now suppose that $\pi_{k,l}({\bf x})$ lies
on the infinite part of  $\mathcal L_{\{k,l\}},$ for instance $x_k = d.$
Let $I=\{i\in\{1,\ldots, n\}\mid x_i<\zeta\}$ and note that, by
(\ref{eq:dIneq}) we have $k\in I.$ Since, by (\ref{eq:zeta}) we have
$\zeta<\lambda,$ 
Lemma \ref{lem:2} implies that there exists a face 
$F_{\overline W}$ of $S(c_0)$ with $I\subseteq {\overline W}.$ We may assume that 
$F_{\overline W}$ is a vertex. We claim that $k\in \overline
W^1\cup \overline W^2;$ indeed, otherwise, let
$(\bar k, \bar l)\in  \overline W^1\times \overline W^2$ 
and ${\bf v}^1(\overline W)=(p_1\ldots p_n),$ ${\bf v}^2(\overline
W)=(q_1\ldots q_n)$ such that $p_{\bar l}=q_{\bar k}=1$. 
Since, by (\ref{eq:epsilon}), at least one of $x_{\bar l}$ and
$x_{\bar k}$ is larger than $\epsilon,$
in view of  (\ref{eq:dIneq}) we have the following contradiction:
$$d = x_k = x_{\bar l}p_k+ x_{\bar k}q_k>
\epsilon\min\{p_k,q_k\}>d.$$

Therefore $k\in \overline W^1\cup \overline W^2;$ 
suppose $k\in \overline W^1$ and let $\overline l\in \overline W^2.$
Since for each $i\in\complement W$ we have $x_i\ge\zeta$ (this from our definition of $I$),  the same
argument as in case $(i)$ shows that $\pi_{k,\overline l}({\bf
  n})\cdot\pi_{k,\overline l}(\dot c(t))\ge 0.$ On the other hand, since
only the $k$-th coordinate of $\bf n$ is nonzero, we have
$$\pi_{k,l}({\bf n})\cdot\pi_{k,l}(\dot c(t))=\pi_{k,\overline l}({\bf n})\cdot\pi_{k,\overline l}(\dot c(t))$$
and (\ref{eq:toProve2}) is shown.
\end{subsection}

Recall that weakly reversible reaction networks are endotactic and in
particular lower-endotactic. The following corollary
states that the version of the Persistence Conjecture
proposed in \cite{anderson:oneLC} holds for systems with two-dimensional
stoichiometric subspace.

\begin{cor}
Any bounded trajectory of a weakly reversible  $\kappa$-variable
mass-action system with two-dimensional stoichiometric subspace is persistent.
\end{cor}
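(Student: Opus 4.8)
The plan is to obtain this corollary directly from Theorem \ref{thm:pers}; the only point requiring argument is that the hypothesis on the stoichiometric subnetworks—that they be lower-endotactic—holds automatically in the weakly reversible case. So let $(\mathcal N, \kappa)$ be a weakly reversible $\kappa$-variable mass-action system whose stoichiometric subspace $S$ has dimension two, and write $\mathcal N = \bigcup_{s=1}^p \mathcal N_s$ as the union of its stoichiometric subnetworks $\mathcal N_s = (\mathcal S, \mathcal C_s, \mathcal R_s)$ as in Definition \ref{def:canonSub}. To invoke Theorem \ref{thm:pers} I need to check that each $\mathcal N_s$ is planar and lower-endotactic.

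First I would verify planarity. By construction the complexes of $\mathcal N_s$ lie in a single coset $a_s + S$ of the two-dimensional subspace $S$, so $\mathrm{aff}(\mathcal N_s) \subseteq a_s + S$ and hence $\dim(\mathrm{aff}(\mathcal N_s)) \le 2$. Thus each $\mathcal N_s$ is planar in the sense of Definition \ref{def:planar} (with the one-dimensional case subsumed as in Remark \ref{rem:2D}).

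Next I would observe that each stoichiometric subnetwork inherits weak reversibility from $\mathcal N$. As noted following Definition \ref{def:canonSub}, every $\mathcal N_s$ is a union of linkage classes, i.e.\ of connected components of the reaction graph. Since $\mathcal N$ is weakly reversible, each of these components is strongly connected; hence $\mathcal N_s$, being a union of strongly connected components, is itself weakly reversible. By Remark \ref{rem:wrEndo} a weakly reversible planar network is endotactic and in particular lower-endotactic, so every $\mathcal N_s$ is lower-endotactic.

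With planarity and the lower-endotactic property of all stoichiometric subnetworks established, every hypothesis of Theorem \ref{thm:pers} is met, and the conclusion is immediate: every bounded trajectory of $(\mathcal N, \kappa)$ is persistent. The argument carries no analytic content of its own—all the work resides in Theorem \ref{thm:pers}—and the only mild obstacle is the structural bookkeeping confirming that the subnetworks are simultaneously planar (from $\dim S = 2$) and weakly reversible (from their description as unions of linkage classes). No estimates are required.
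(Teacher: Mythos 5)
Your argument is correct and is essentially the paper's own: the paper dispatches this corollary with the single remark that weakly reversible networks are endotactic and hence lower-endotactic (Remark \ref{rem:wrEndo}), so that Theorem \ref{thm:pers} applies directly. You merely spell out the routine bookkeeping (planarity of the stoichiometric subnetworks and inheritance of weak reversibility by unions of linkage classes) that the paper leaves implicit.
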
 

\begin{example}
To conclude this section let us revisit the $\kappa$-variable mass-action
system (\ref{ex:MA}). We know that its stoichiometric subspace is
two-dimensional (Example \ref{ex:stoich}) and that its stoichiometric
subnetworks coincide with its linkage classes (Example \ref{ex:canon}). 
If $L_1$ denotes the first linkage class then the projection 
$\pi_{1,2}:\mathrm{aff}(L_1)\to Z_{\{3,4\}}$ is invertible. Since 
$\pi_{1,2}(L_1)=\{B\mathop\rightleftharpoons^{}_{} A
\mathop{\to}^{} 
A+B\mathop\to^{} 0\}$ is easily seen to be endotactic, according to
Proposition \ref{prop:transform}, the same is
true for $L^1=\pi_{1,2}^{-1}(\pi_{1,2}(L_1)).$ Similarly, the second
linkage class is endotactic. Since $c_A+c_D$ and $c_B+c_C$ remain
constant along trajectories, any trajectory is bounded.  
Therefore, Theorem \ref{thm:pers} implies that the dynamical system
(\ref{eq:exMA}) is persistent.
\end{example}
\end{subsection}
\end{section}

\begin{section}{The Global Attractor Conjecture for systems with 
three-dimensional stoichiometric subspace}\label{sec:gac} 
Recall from Introduction that in order to show the Global Attractor Conjecture it is
enough to prove that all trajectories of complex-balanced
mass-action  systems are persistent.
Theorem
  \ref{thm:pers} may be used to analyze the behavior of trajectories of weakly
  reversible mass-action systems near faces of $S(c_0)$ of codimension
  two. As we shall see below, trajectories can approach such a face
only if they approach its boundary. This is  made
precise in Theorem \ref{thm:repel}. 
On the other hand, as discussed in Introduction, 
vertices of $S(c_0)$ cannot be $\omega$-limit points for
trajectories of complex-balanced systems
\cite{anderson:primul, craciun_dickenstein_shiu_sturmfels}.
Moreover, codimension-one faces of  $S(c_0)$ are repelling
\cite{anderson_shiu} and we have the following result:

\begin{thm}[\cite{anderson_shiu} Corollary 3.3]\label{thm:anderson}
Let $c_0\in\mathbb R^n_{>0}$ and let $T(c_0)$ denote a bounded
trajectory of a weakly reversible complex-balanced mass-action system.
Also let $F_W$ be a codimension-one face of $S(c_0).$ If $T(c_0)$ does not
have $\omega$-limit points on the (relative) boundary of $F_W,$ then  
it does not have $\omega$-limit points on $F_W.$
\end{thm}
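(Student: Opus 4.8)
\noindent\emph{Plan of proof.} The plan is to produce a linear functional that strictly increases along the trajectory on a thin slab adjacent to the relative interior of $F_W$, and then to feed this repelling estimate into the fact, cited in the Introduction, that $\omega$-limit points of a complex-balanced system are equilibria. Since the system is complex-balanced, the Horn--Jackson Lyapunov function is nonincreasing along the bounded trajectory $T(c_0)$, so its $\omega$-limit set $\Omega:=\lim_{\omega}T(c_0)$ is a compact, connected, invariant set contained in the set of equilibria, on which the Lyapunov function is constant. If $W$ is not a siphon (semilocking set), then $T(c_0)$ has no $\omega$-limit points on $F_W$ at all and there is nothing to prove; so I would assume $W$ is a siphon, i.e. for every reaction $P\to P'$ one has $\mathrm{supp}(P')\cap W\neq\emptyset\Rightarrow\mathrm{supp}(P)\cap W\neq\emptyset$. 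Arguing by contradiction, suppose $\Omega$ meets the relative interior of $F_W$ but not $\partial F_W$; then $K:=\Omega\cap F_W$ is a nonempty compact subset of the relatively open face, and $c(t)\to\Omega$ forces the trajectory to be eventually confined to the region $R=\{c\in S(c_0):c_i\ge\rho\text{ for all }i\in\complement W\}$ for some $\rho>0$. This confinement is the only place where the hypothesis that $\Omega$ avoids $\partial F_W$ is used, and it keeps the $\complement W$-factors of all reaction rates bounded away from $0$.

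\noindent The combinatorial heart of the argument uses the siphon structure to split the reactions into \emph{facet reactions}, whose source $P$ satisfies $\mathrm{supp}(P)\cap W=\emptyset$ (so that, by the siphon property, $P'_i=P_i=0$ for all $i\in W$ and the reaction vector is tangent to $F_W$), and \emph{transverse reactions}, whose source has support meeting $W$ and whose mass-action rate $\kappa_{P\to P'}c^{P}$ therefore vanishes on $F_W$. The core step is to construct positive weights $\beta_i$, $i\in W$, so that the functional $\psi(c)=\sum_{i\in W}\beta_i c_i$, which vanishes exactly on $Z_W\supseteq F_W$ and is positive in the interior, satisfies $\dot\psi>0$ on $R\cap\{0<\psi\le\delta\}$ for small $\delta>0$. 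Because facet reactions contribute nothing,
\[
\dot\psi=\sum_{P\to P'}\kappa_{P\to P'}\,c^{P}\Big(\sum_{i\in W}\beta_i(P'_i-P_i)\Big)
\]
is governed, as $c\to F_W$, by the transverse reactions whose source has minimal $W$-degree $\sum_{i\in W}P_i$. A domination argument in the spirit of Lemma \ref{lem:staysInside}, using weak reversibility to guarantee that each minimal-degree source complex is replenished from within $W$, together with the complex-balanced relations to fix the $\beta_i$, is what I would use to force this dominant contribution to be strictly positive.

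\noindent Granting the slab estimate, the contradiction is soft. The positive orthant is forward invariant, so $\psi(c(t))>0$ for all $t$, while a subsequence $c(t_n)\to p\in K$ yields $\psi(c(t_n))\to\psi(p)=0$. On $R\cap\{0<\psi\le\delta\}$ the estimate $\dot\psi>0$ shows that $R\cap\{\psi\ge\delta\}$ is forward invariant for the (eventually confined) trajectory, since at the level $\psi=\delta$ the flow pushes $\psi$ upward. Hence, for large $t$, either $\psi(c(t))$ is eventually $\ge\delta$, or it never reaches $\delta$ and is therefore eventually strictly increasing; in either case $\liminf_{t\to\infty}\psi(c(t))>0$, contradicting $\psi(c(t_n))\to0$. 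The main obstacle is thus entirely the construction of the weights $\beta_i$ and the verification of the strict inequality $\dot\psi>0$ near the facet: this is where weak reversibility and the complex-balanced relations are indispensable, and where the naive choice $\beta_i\equiv1$ need not work, since one must order the transverse reactions by $W$-degree and extract a positive leading term rather than a sign that could go either way.
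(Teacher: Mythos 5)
First, a point of comparison: the paper does not prove Theorem \ref{thm:anderson} at all --- it is imported verbatim from \cite{anderson_shiu} (Corollary 3.3) and used as a black box alongside the paper's own codimension-two analogue, Theorem \ref{thm:repel}. So your proposal can only be measured against the external Anderson--Shiu argument and against the paper's parallel treatment of codimension-two faces in Lemma \ref{lem:repel} and Theorem \ref{thm:repel}. Judged that way, your overall shape is right --- restrict to siphons $W$, split reactions according to whether the source support meets $W$, and produce a functional vanishing on $Z_W$ that increases on a thin slab over a compact subset of $\mathrm{int}(F_W)$ --- but two steps do not hold up as written. The first is the confinement claim: from ``$\Omega$ meets $\mathrm{int}(F_W)$ but not $\partial F_W$'' you conclude that $T(c_0)$ is eventually contained in $R=\{c: c_i\ge\rho \text{ for all } i\in\complement W\}$. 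That does not follow: the hypothesis excludes $\omega$-limit points only on the relative boundary of $F_W$, not on other faces of $S(c_0)$, so the trajectory may leave $R$ infinitely often by approaching a face disjoint from $F_W$, and your ``soft contradiction'' (forward invariance of $R\cap\{\psi\ge\delta\}$) collapses. The repair is the localized exit-and-reenter argument the paper uses to derive Theorem \ref{thm:repel} from Lemma \ref{lem:repel}: the slab estimate is only needed while $c(t)$ sits in the cylinder $\pi_{\complement W}(K)\times[0,\epsilon]^W$, and one extracts a sequence of boundary points of that cylinder with $\psi\to 0$, hence an $\omega$-limit point on $\partial K$ and therefore on $\partial F_W$.

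The second gap is the slab estimate itself, which is the entire content of the theorem and which you leave as a sketch resting on the wrong tools. Complex balancing is not what makes $\dot\psi>0$ (the repulsion in \cite{anderson_shiu} holds for weakly reversible systems generally), and the Horn--Jackson Lyapunov preamble is never actually used in your argument. What you are missing is the structural fact that makes codimension one tractable: $\dim\pi_W(S)=1$, so there is a componentwise positive $v_W\in\mathbb R^W$ with $\pi_W(c)=t\,v_W$, $t\ge 0$, for every $c\in S(c_0)$, and $\pi_W(P'-P)=\lambda_{P\to P'}\,v_W$ for every reaction. Consequently any positive weights work ($\beta_i\equiv 1$ included, contrary to your worry), $\dot\psi$ is a positive multiple of $\sum\kappa_{P\to P'}c^P\lambda_{P\to P'}$, each rate scales as $t^{\sum_{i\in W}P_i}$ times a factor bounded away from $0$ and $\infty$ on the cylinder, and weak reversibility forces $\lambda_{P\to P'}>0$ for every essential reaction whose source minimizes $\sum_{i\in W}P_i$ among essential sources (a directed path back from a lower-degree product would produce an essential source of strictly smaller degree, contradicting minimality). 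Lemma \ref{lem:staysInside} then gives domination for small $t$, exactly as in the two-dimensional constructions of section \ref{sec:2Dred}. Without this reduction, ``order the transverse reactions by $W$-degree and extract a positive leading term'' is a statement of what must be proved rather than a proof: nothing in your text rules out two minimal-degree sources contributing with opposite signs.
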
 

For weakly reversible, complex-balanced systems with three-dimensional 
stoichiometric compatibility classes, the
results mentioned above cover all faces of $S(c_0)$ and can be
combined into a proof of the Global Attractor Conjecture for this case.
We start with the following lemma.

\begin{lemma}\label{lem:repel}
Let $(\mathcal S,\mathcal C,\mathcal R,\kappa)$ be a weakly-reversible $\kappa$-variable
mass-action system,
let $c_0\in\mathbb R^n_{>0}$ and let $F_W$ be a face
of $S(c_0)$ of codimension two. 
Then for any compact $K\subset \mathrm{int}(F_W)$ there exist 
$\tau>0$ and $\epsilon>0$ such that if for some $t',t''\in \mathbb
R_{>0}$ we have
$c(t)=(x_1(t),\ldots, x_n(t))\in \pi_{\complement W}(K)\times
[0,\epsilon]^W$ for all
$t\in[t',t'']$, then $\sum_{i\in W}x_i(t'')\ge \tau \sum_{i\in W}x_i(t').$
\end{lemma}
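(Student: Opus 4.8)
The plan is to localize the problem near $F_W$, reduce it to a two-dimensional projected system, and then invoke the trapping argument behind Corollary \ref{cor:x+y}. First I would record the local geometry of $S(c_0)$ at the codimension-two face $F_W$. The tangent cone of $S(c_0)$ at a relative interior point of $F_W$ splits as $\mathrm{aff}(F_W)$ together with a two-dimensional \emph{pointed} cone in the transverse directions, and a pointed planar cone has exactly two extreme rays. Exactly as in Section \ref{sec:pers}, let ${\bf v}^1(W)=(p_1,\ldots,p_n)$ and ${\bf v}^2(W)=(q_1,\ldots,q_n)$ be vectors along these rays pointing into $S(c_0)$, and fix a pair $(k,l)\in W^1\times W^2$ with $l<k$ and the normalization $p_l=q_k=1$, $p_k=q_l=0$; by the same argument as in Remark \ref{rem:signpq} we have $p_i,q_i\ge 0$ for all $i\in W$. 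Since every point of $S(c_0)$ near $\mathrm{int}(F_W)$ has the form $F_W+(\text{motion along }F_W)+x_l{\bf v}^1(W)+x_k{\bf v}^2(W)$ and the $W$-coordinates vanish on $F_W$, we obtain $x_i=p_ix_l+q_ix_k$ for all $i\in W$, whence
\[
\sum_{i\in W}x_i = P\,x_l+Q\,x_k,\qquad P=\sum_{i\in W}p_i\ge 1,\quad Q=\sum_{i\in W}q_i\ge 1 .
\]
Thus $\sum_{i\in W}x_i$ is comparable to $x_k+x_l$ with constants depending only on the geometry of $F_W$, and it suffices to bound $x_k(t'')+x_l(t'')$ below by a fixed multiple of $x_k(t')+x_l(t')$.

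Next I would project onto $\{k,l\}$ exactly as in the proof of Theorem \ref{thm:pers}, Case (i). Writing $\Theta=({\bf v}^1(W)\ {\bf v}^2(W))$, $\Psi_{k,l}=\pi_W\Theta$, and choosing $a_s$ with $\pi_{k,l}(a_s)=(0,0)$ as in Remark \ref{rem:a_s}, the curve $\pi_{k,l}(c(\cdot))$ solves the 2D-reduced mass-action system $\bigcup_{s=1}^p(\pi_{k,l}(\mathcal N_s),\Psi_{k,l},\overline\kappa_s,a_s)$, where
\[
\overline\kappa_{s,Q\to Q'}(t)=\kappa_{\Theta Q+a_s\to\Theta Q'+a_s}(t)\,\pi_{\complement W}(c(t))^{\pi_{\complement W}(\Theta Q+a_s)} .
\]
Because $\mathcal N$ is weakly reversible, each $\mathcal N_s$ is lower-endotactic (Remark \ref{rem:wrEndo}), and hence so is each $\pi_{k,l}(\mathcal N_s)$ by Proposition \ref{prop:transform}. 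The crucial point is that on the interval $[t',t'']$ the hypothesis $c(t)\in\pi_{\complement W}(K)\times[0,\epsilon]^W$ forces $\pi_{\complement W}(c(t))$ into the compact set $\pi_{\complement W}(K)\subset(0,\infty)^{\complement W}$, bounded away from $0$ and $\infty$; combined with $\kappa(t)\in(\eta,1/\eta)^{\mathcal R}$ this produces a constant $\eta'>0$, depending only on $K$ and $\eta$ (and not on $\epsilon$), such that $\overline\kappa_s(t)\in(\eta',1/\eta')^{\pi_{k,l}(\mathcal R_s)}$ for all $t\in[t',t'']$.

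Finally I would run the reasoning behind Corollary \ref{cor:x+y}. Let $\epsilon_0>0$ and $\tau_0>0$ be the constants it provides for the 2D-reduced system determined by $\{\pi_{k,l}(\mathcal N_s)\}$, $\Psi_{k,l}$ and $\eta'$, and take $\epsilon\le\epsilon_0$ in the statement of the lemma. Then $\pi_{k,l}(c(t'))=(x_l(t'),x_k(t'))\in[0,\epsilon_0]^2$, so I would construct the polygonal line $\overline{\mathcal L}$ through $\pi_{k,l}(c(t'))$ and apply the Nagumo sub-tangentiality argument of Theorem \ref{thm:proj} on the interval $[t',t'']$: the bounds $\overline\kappa_s(t)\in(\eta',1/\eta')$ available on $[t',t'']$ are exactly what is needed for the vector field to point into $\overline{\mathcal L}^+$ along $\partial\overline{\mathcal L}^+$, so $\pi_{k,l}(c(t))$ stays above the line $x+y=\min\{\tilde x,\tilde y\}$ for all $t\in[t',t'']$. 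This yields $x_l(t'')+x_k(t'')\ge\tau_0\bigl(x_l(t')+x_k(t')\bigr)$, and combining with the comparison above gives the claim with $\tau=\tfrac{\min\{P,Q\}}{\max\{P,Q\}}\,\tau_0$. I expect the main obstacle to be precisely this last localization: Corollary \ref{cor:x+y} is stated for a trajectory controlled for all $t\ge 0$, whereas here the 2D-reduced structure and the bounds on $\overline\kappa_s$ are guaranteed only while $c(t)$ remains in the prescribed neighborhood of $F_W$, so one must verify that the forward-invariance argument uses the sub-tangentiality condition only on $[t',t'']$ and never invokes the dynamics outside this interval.
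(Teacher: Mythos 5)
Your proposal is correct and follows essentially the same route as the paper's proof: project onto a pair $(k,l)\in W^1\times W^2$, realize the projected dynamics as a 2D-reduced mass-action system with rate-constant functions bounded in $(\eta',1/\eta')$ on $[t',t'']$ because $\pi_{\complement W}(K)$ is compact in the open orthant, invoke the trapping region of Corollary \ref{cor:x+y}, and transfer the bound back via $\sum_{i\in W}x_i = Ax_k+Bx_l$. Your closing remark about the localization of Corollary \ref{cor:x+y} to the interval $[t',t'']$ is a point the paper glosses over, and your resolution (the Nagumo sub-tangentiality argument only uses the vector field at times when the trajectory is on $\partial\overline{\mathcal L}^+$, hence only on $[t',t'']$) is the right one.
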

\begin{proof}
We denote $\mathcal N=(\mathcal S,\mathcal C,\mathcal R).$ 
Let $S$ denote the stoichiometric subspace of $\mathcal N,$ let $d=\dim
S,$ and let  $\pi_{W}|_S:S\to Z_{\complement W}$ denote the restriction of $\pi_{W}$ to $S.$
Since $F_W$ is of codimension two, we have $\dim (S\cap Z_{W})=d-2.$ 
But $S\cap Z_{W}=\ker (\pi_{W}|_S)$ and therefore
$\pi_{W}(S)=\mathrm{Im}(\pi_W|_S)$ has
dimension two. Note that $\pi_{W}(S)$ is the stoichiometric subspace
of $\tilde{\mathcal N}=\pi_W(\mathcal N).$ Since $\mathcal N$ is weakly reversible, so is
$\tilde{\mathcal N};$ in particular, $\tilde{\mathcal N}$ has two-dimensional stoichiometric subspace and
lower-endotactic subnetworks, which we denote by $\tilde{\mathcal
  N}_s=(\mathcal S, \tilde{\mathcal C}_s, \tilde{\mathcal R}_s)$ for  
$s\in\{1,\ldots, p\}$. 

We may now apply the results of the preceding sections. 
The face $\tilde F_{W}$ of the stoichiometric compatibility class
$\tilde S(\pi_W(c_0))$ of $\tilde{\mathcal N}$ is 
the origin of $\mathbb R^W.$ 
We let $\tilde{F}_{W^1}$ and $\tilde{F}_{W^2}$ denote
the two edges of $\tilde S(\pi_W(c_0))$ that are adjacent to $\tilde
F_{W}$ (recall that $W^1, W^2$ are subsets of $\{1,\ldots, n\}$ which
are contained in $W$). Let $(k,l)\in W^1\times W^2.$ We may assume $l<k$ and we scale 
the direction vectors ${\tilde{\bf v}}_1(W)$ and ${\tilde{\bf v}}_2(W)$ of 
$\tilde{F}_{W^1}$ and $\tilde{F}_{W^2}$ 
such that the $l$th coordinate of ${\tilde{\bf v}}_1(W)$
and the $k$th coordinate of ${\tilde{\bf v}}_2(W)$ are equal to 1.
Also, for each $s\in\{1,\ldots, p\},$ we choose
$a_s\in\mathbb R^W$ such that $\tilde{\mathcal C}_s\subset a_s+\tilde S$ and 
$\pi_{k,l}(a_s)=(0,0)$ (this is possible as explained in Remark \ref{rem:a_s}).

Suppose that $\kappa(t)\in (\eta,1/\eta)^{\mathcal R}$ for all $t\ge 0$ 
and let $\Psi_{k,l}$ be the matrix with columns
$\tilde{\bf v}_1(W)$ and $\tilde{\bf v}_2(W).$ We have

\begin{eqnarray*}
&&\frac{d}{dt}\pi_{k,l}(c)=
\sum_{Q\to Q'\in\pi_{k,l}(\mathcal R)}
\sum_{\substack{{\{P\to P'\in{\cal R} |}\\{\pi_{k,l}(P\to P')=Q\to Q'\}}}}
\kappa_{P\to P'}(t) c^P
(Q'-Q)
\\\nonumber
&=&
\sum_{s=1}^p\sum_{Q\to Q'\in\pi_{k,l}(\mathcal
  R_s)} 
\sum_{\substack{{\{P\to P'\in\cal R|}\\{\pi_W(P\to
      P')=\Psi_{k,l}Q+a_s\to \Psi_{k,l}Q'+a_s\}}}}
\kappa_{P\to P'}(t) c^P (Q'-Q),
\end{eqnarray*}
since, as explained in Remark \ref{rem:injective}, the only reaction of
${\cal R}_s$ that is mapped to $Q\to Q'$ by $\pi_{k,l}$ is 
$\Psi_{k,l}Q+a_s\to \Psi_{k,l}'Q+a_s.$ Further, note that
$\pi_W(c)=\Psi_{k,l}\pi_{k,l}(c)$ for any $c\in T(c_0);$ 
therefore, writing $c^P=\pi_W(c)^{\pi_W(P)}\pi_{\complement
  W}(c)^{\pi_{\complement W}(P)}$ we have 
$$\frac{d}{dt}\pi_{k,l}(c)=
\sum_{s=1}^p\sum_{Q\to Q'\in\pi_{k,l}(\mathcal R_s)} 
\overline{\kappa}_{s,Q\to Q'}(t)(\Psi_{k,l}\pi_{k,l}(c))^{\Psi_{k,l}Q+a_s}
(Q'-Q),
$$
where
$$\overline{\kappa}_{s, Q\to Q'}(t)=
\sum_{\substack{{\{P\to P'\in\cal R|}\\{\pi_W(P\to
      P')=\Psi_{k,l}Q+a_s\to \Psi_{k,l}Q'+a_s\}}}}\kappa_{P\to P'}(t) \pi_{\complement
  W}c(t)^{\pi_{\complement W}(P)}$$
for all $t\ge 0.$
Since $K\subset \mathrm{int}(F_W)$ there exists
$0<\zeta<1$ such that $\pi_{\complement W}(K)\subset (\zeta,
1/\zeta)^{\complement W}.$ 
It follows that there exists $\eta'<1$ such that $\bar\kappa_{s,Q\to
  Q'}(t)\in (\eta',1/\eta')$ for all $t\in[t',t''].$
The projection of $(\mathcal N,\kappa)$ onto
$\{k,l\}$ is a 2D-reduced mass-action system
$\bigcup_{s=1}^p(\pi_{k,l}(\tilde{\mathcal N}_s), \Psi_{k,l}, \overline{\kappa}_s,a_s).$ 
Since $k,l\in W,$ Corollary \ref{cor:x+y} implies that there exists 
$\epsilon>0$ and $\tau'>0$ such that  
if $c(t)\in \pi_{\complement W}(K)\times [0,\epsilon]^W$ for
$t\in[t',t''],$ then 
$x_k(t'')+x_l(t'')\ge \tau'(x_k(t')+x_l(t')).$
Since $\pi_W(c(t))=\Psi_{k,l}\pi_{k,l}(c(t))$ we have 
$\sum_{i\in W} x_i(t) = Ax_k(t)+Bx_l(t)$ for some positive numbers $A$
and $B$ and for all $t\ge 0.$ Therefore 
$$\sum_{i\in W} x_i(t'')\ge
\min\{A,B\}(x_k(t'')+x_l(t''))
\ge \tau'\min\{A,B\}(x_k(t')+x_l(t'))
\ge \frac{\tau'\min\{A,B\}}{\max\{A,B\}} \sum_{i\in W} x_i(t')
$$
and the conclusion follows by setting $\tau=\frac{\tau'\min\{A,B\}}{\max\{A,B\}}.$
\end{proof}

We may now extend the result of Corollary 3.3 in \cite{anderson_shiu} 
(Theorem \ref{thm:anderson}) to faces of codimension two.
\begin{thm}\label{thm:repel}
Let $(\mathcal S,\mathcal C,\mathcal R,\kappa)$ be a weakly-reversible $\kappa$-variable
mass-action system, let $c_0\in\mathbb R^n_{> 0}$ 
such that the forward trajectory $T(c_0)$ is bounded, and let $F_W$ be a face
of $S(c_0)$ of codimension two. Then, if $T(c_0)$ has
$\omega$-limit points on $F_W,$ 
it must also have $\omega$-limit points on the relative boundary of $F_W.$
\end{thm}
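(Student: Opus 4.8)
The plan is to prove the contrapositive: assuming $\lim_\omega T(c_0)$ contains no points on the relative boundary $\partial F_W$, I would show that it contains no points of $F_W$ at all. The whole argument is driven by Lemma \ref{lem:repel}, which I read as a one-sided growth estimate for the ``height above the face'' $g(t) := \sum_{i\in W} x_i(t)$: on every time interval over which the trajectory stays in a tube $\pi_{\complement W}(K)\times[0,\epsilon]^W$ around the interior of $F_W$, one has $g(t'')\ge \tau\, g(t')$, so $g$ cannot shrink by more than a fixed factor. Throughout I would use that $g(t)>0$ for all $t$ (the positive orthant is forward invariant), that $\mathrm{dist}(c(t),\lim_\omega T(c_0))\to 0$ because $T(c_0)$ is bounded, and that $S(c_0)\cap Z_W=F_W$, so that any limit point of the trajectory with $g=0$ lies in $F_W$.

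Suppose for contradiction that $\lim_\omega T(c_0)\cap F_W\neq\emptyset$. By hypothesis this set avoids $\partial F_W$, so $\Omega_0 := \lim_\omega T(c_0)\cap F_W$ is a compact subset of $\mathrm{int}(F_W)$; in particular $\liminf_{t}g(t)=0$. Since $\pi_{\complement W}$ is injective on $Z_W$, the image $\pi_{\complement W}(\Omega_0)$ is a compact subset of the relative interior of $\pi_{\complement W}(F_W)$, and I would fix a compact $K\subset \mathrm{int}(F_W)$ whose projection $\pi_{\complement W}(K)$ is a compact neighborhood of $\pi_{\complement W}(\Omega_0)$ inside that relative interior. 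Lemma \ref{lem:repel} then supplies constants $\tau>0$ and $\epsilon>0$ for this $K$; replacing $\tau$ by $\min(\tau,1)$ I may take $\tau\le 1$. Write $\mathcal B = \{x:\pi_{\complement W}(x)\in\pi_{\complement W}(K),\ x_i\le\epsilon\text{ for all }i\in W\}$, which is closed.

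The key intermediate step is a localization claim: there exist $\delta^*\in(0,\epsilon]$ and $T_1$ such that for all $t\ge T_1$, $g(t)<\delta^*$ forces $c(t)\in\mathcal B$. Indeed, $g(t)<\delta^*\le\epsilon$ already gives $x_i(t)<\epsilon$ for $i\in W$, so only the projection needs attention, and for that I would argue by compactness. Were the claim false, there would be times $t_n\to\infty$ with $g(t_n)\to 0$ yet $\pi_{\complement W}(c(t_n))\notin\pi_{\complement W}(K)$; passing to a convergent subsequence $c(t_n)\to q$ (using boundedness) gives $q\in\lim_\omega T(c_0)$ with $g(q)=0$, hence $q\in S(c_0)\cap Z_W=F_W$ and therefore $q\in\Omega_0$. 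But then $\pi_{\complement W}(q)$ lies in the interior of $\pi_{\complement W}(K)$, contradicting $\pi_{\complement W}(c(t_n))\notin\pi_{\complement W}(K)$.

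Finally I would play the growth estimate against $\liminf_t g(t)=0$. First, $g$ cannot stay below $\delta^*$ on any tail $[t_a,\infty)$: there $c(t)\in\mathcal B$, so Lemma \ref{lem:repel} gives $g(t)\ge\tau g(t_a)>0$ for all $t\ge t_a$, contradicting $\liminf g=0$; hence $g\ge\delta^*$ at arbitrarily large times. Now choose $t^*\ge T_1$ with $g(t^*)<\tau\delta^*\ (\le\delta^*)$ and with $g\ge\delta^*$ somewhere on $[T_1,t^*]$, and let $t_e\in[T_1,t^*]$ be the last time at which $g=\delta^*$; by continuity $g<\delta^*$ on $(t_e,t^*]$ and $g(t_e)=\delta^*$, so $c(t)\in\mathcal B$ on all of $[t_e,t^*]$ (the endpoint $t_e$ by closedness of $\mathcal B$). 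Lemma \ref{lem:repel} then yields $g(t^*)\ge\tau g(t_e)=\tau\delta^*$, contradicting $g(t^*)<\tau\delta^*$. I expect the decisive analytic work to be already sealed inside Lemma \ref{lem:repel} (via the reduction to a 2D-reduced mass-action system and Corollary \ref{cor:x+y}); the residual difficulty here is the localization claim, whose validity rests entirely on the standing assumption that $\lim_\omega T(c_0)$ misses $\partial F_W$, since that is exactly what prevents the trajectory from approaching $F_W$ through its boundary and thereby slipping out of the tube.
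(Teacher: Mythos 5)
Your argument is correct and follows the paper's strategy in its essentials: both proofs hinge entirely on Lemma \ref{lem:repel}'s growth estimate $g(t'')\ge\tau\,g(t')$ for $g=\sum_{i\in W}x_i$ while the trajectory stays in the tube $\pi_{\complement W}(K)\times[0,\epsilon]^W$ around a compact $K\subset\mathrm{int}(F_W)$ containing $\lim_{\omega}T(c_0)\cap F_W$, and both derive a contradiction from the trajectory having to drive $g$ to zero while being confined to that tube whenever $g$ is small. The difference is the finishing move. The paper manufactures a sequence $t_m\to\infty$ of times on the lateral boundary $\partial(\pi_{\complement W}(K))\times[0,\epsilon]^W$ with $g(t_m)\to 0$ and shows that an accumulation point of $\{c(t_m)\}$ is a new $\omega$-limit point lying on the relative boundary of $K$, contradicting $\lim_{\omega}T(c_0)\cap F_W\subset\mathrm{int}(K)$; you instead prove a localization claim (for large $t$, $g(t)<\delta^*$ forces $c(t)$ into the tube) and then run a last-crossing argument on the level $g=\delta^*$ to get $g\ge\tau\delta^*$ on a tail, contradicting $\liminf_t g=0$ directly. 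Your version is slightly more self-contained in that it avoids producing a spurious limit point, and it is of comparable difficulty; the compactness step behind your localization claim is exactly the paper's observation that the relevant crossing points accumulate on $\partial K$. One caveat you share with the paper: read literally, $\pi_{\complement W}(K)$ is only $(\dim F_W)$-dimensional, so $\pi_{\complement W}(K)\times[0,\epsilon]^W$ is not a neighborhood of $\mathrm{int}(K)$ in $\mathbb R^n_{\ge 0}$ (projections $\pi_{\complement W}(c(t))$ of nearby trajectory points need not lie in that thin set), and your localization claim, like the paper's assertion that the trajectory ``must enter $K_\epsilon$'', requires $\pi_{\complement W}(K)$ to be replaced by a full-dimensional compact neighborhood of it in $\mathbb R^{\complement W}$; this costs nothing, since the proof of Lemma \ref{lem:repel} only uses that the $\complement W$-coordinates remain in some $(\zeta,1/\zeta)^{\complement W}$.
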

\begin{proof}
Suppose the claim of the theorem was false. Then $\lim_{\omega}
T(c_0)\cap F_W\subset \mathrm{int}(F_W)$ is compact 
(it is an intersection of closed sets and is
bounded since $T(c_0)$ is bounded). It follows that there exists a compact set
$K\subset \mathrm{int}(F_W)$ such that 
$\lim_{\omega}T(c_0)\cap F_W\subset \mathrm{int}(K).$
From Lemma \ref{lem:repel}, there exist $\tau>0$ and
$\epsilon>0$ such that if  $c(t)=(x_1(t),\ldots, x_n(t))\in \pi_{\complement W}(K)\times
[0,\epsilon]^W$ for all $t\in [t',t'']$ then $\sum_{i\in
  W}x_i(t'')\ge \tau\sum_{i\in W}x_i(t').$ It follows that, 
in order to approach an $\omega$-limit point in $K,$ $T(c_0)$ must
exit and reenter $K_\epsilon=\pi_{\complement W}(K)\times [0,\epsilon]^W$ infinitely
often. More precisely, there exist $t_1<t_2<\ldots$ with
$t_m\to\infty$ as $m\to \infty$ such that  $c(t_m)\in \partial K_\epsilon$ for all
$m\ge 1$ and $\sum_{i\in W} x_i(t_m)\to 0$ as $m\to \infty$.
Note that $K=\pi_{\complement W}(K)\times \{0\}^W$ and
therefore we must have $c(t_m)\in \partial(\pi_{\complement W}(K))\times
[0,\epsilon]^W$ for $m$ large enough. 
If ${\mathcal X}=\{c(t_1),c(t_2),\ldots\}$ then, since the closure $\overline{\mathcal X}$ of
${\mathcal X}$ in $\mathbb R^n$ is compact, there must exist ${\bf y}\in
\overline{\mathcal X}\subset \partial(\pi_{\complement W}(K))\times
[0,\epsilon]^W$ such that $\sum_{i\in W} y_i= 0$, which implies ${\bf y}\in
\partial(\pi_{\complement W}(K))\times \{0\}^W = \partial K$ (the
relative boundary of $K$). But ${\mathcal X}\cap F_W=\O$ since $T(c_0)$ does not
intersect the boundary of $\mathbb R^n,$ and it follows that ${\bf y}$ is
an accumulation point of ${\mathcal X}$, and therefore ${\bf y}\in
\lim_{\omega}T(c_0).$ But this is a contradiction since
$\lim_{\omega}T(c_0)\cap \partial K=\O.$
\end{proof}

A proof of the Global Attractor Conjecture may now be obtained for systems with
three-dimensional stoichiometric subspace. 

\begin{thm}\label{thm:gac} 
Consider a complex-balanced
system with three-dimensional stoichiometric subspace. 
Then, the unique positive equilibrium contained in a stoichiometric 
compatibility class is a global attractor 
of the relative interior of that stoichiometric compatibility class. 
\end{thm}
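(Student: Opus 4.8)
The plan is to reduce the statement to a persistence claim and then assemble the boundary results already in place. First I would recall from the Introduction that a complex-balanced system is weakly reversible, carries the strict Lyapunov function of Horn and Jackson \cite{horn_jackson}, and has trajectories converging to the set of equilibria \cite{anderson:primul, siegel_maclean, sontag:tcell}. Since that Lyapunov function is nonincreasing along the flow and has bounded sublevel sets within each stoichiometric compatibility class $S(c_0)=(c_0+S)\cap\mathbb R^n_{\ge 0},$ every forward trajectory $T(c_0)$ with $c_0\in\mathbb R^n_{>0}$ is bounded. It then suffices to show that $T(c_0)$ is persistent, that is, that $\lim_{\omega}T(c_0)$ does not meet $\partial\mathbb R^n_{\ge 0}:$ granting this, $\lim_{\omega}T(c_0)$ is a nonempty set of equilibria contained in the relative interior of $S(c_0),$ and since the unique positive equilibrium $c_*$ is the only equilibrium in that relative interior, $T(c_0)$ must converge to $c_*.$

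The heart of the proof is to exclude $\omega$-limit points on every proper face $F_W$ of the three-dimensional polyhedron $S(c_0),$ which I would do by increasing codimension. The codimension-three faces are the vertices of $S(c_0),$ and these cannot be $\omega$-limit points \cite{anderson:primul, craciun_dickenstein_shiu_sturmfels}. If $F_W$ is a codimension-two face (an edge), then by Theorem \ref{thm:repel} an $\omega$-limit point on $F_W$ would force an $\omega$-limit point on the relative boundary of $F_W;$ but the relative boundary of an edge consists only of vertices, which have just been excluded. Finally, if $F_W$ is a codimension-one face, its relative boundary is a union of edges and vertices of $S(c_0),$ none of which carries $\omega$-limit points, so Theorem \ref{thm:anderson} rules out $\omega$-limit points on $F_W$ as well.

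Since the proper faces of $S(c_0)$ exhaust $S(c_0)\cap\partial\mathbb R^n_{\ge 0},$ the three cases together yield persistence, and convergence to $c_*$ follows as described above. The one genuinely delicate point---the behavior of trajectories near codimension-two faces---has already been packaged into Theorem \ref{thm:repel}, which in turn relies on the two-dimensional persistence theory developed in Sections \ref{sec:2Dred}--\ref{sec:pers}. Consequently I expect no essential obstacle here beyond the bookkeeping of the face lattice of a three-dimensional stoichiometric compatibility class: checking that vertices, edges, and facets exhaust its proper faces and that each is dispatched by the matching result, together with the standard facts that a bounded trajectory of a complex-balanced system has a nonempty $\omega$-limit set of equilibria and that such a compatibility class contains a single positive equilibrium.
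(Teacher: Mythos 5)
Your proposal is correct and follows essentially the same route as the paper: reduce to persistence via boundedness and convergence to the equilibrium set, then exclude $\omega$-limit points on the boundary face by face, using the known result for vertices, Theorem \ref{thm:repel} for codimension-two faces, and Theorem \ref{thm:anderson} for codimension-one faces. The only difference is presentational—you organize the exclusion by increasing codimension, while the paper phrases it as a chain pushing a hypothetical boundary $\omega$-limit point down to a vertex—but the logic is identical.
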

\begin{proof}
It is known that any trajectory of a complex-balanced system is
bounded \cite{feinberg:lectures}. Since a stoichiometric compatibility
class has only faces of
codimension two, codimension one and vertices, Theorems
\ref{thm:anderson} and \ref{thm:repel}, applied in this order, show that
if a trajectory $T(c_0)$ has $\omega$-limit points, then a
vertex of $S(c_0)$ is an $\omega$-limit point. But this is known to be false
\cite{anderson:primul, craciun_dickenstein_shiu_sturmfels}. 
\end{proof} 

\end{section}

\begin{section}*{Acknowledgements}
The author thanks his adviser Gheorghe Craciun for guidance, support, and for many illuminating discussions. I also thank Anne Shiu and the anonymous reviewers for many useful comments, which greatly improved the paper. This work was completed during the author's stay as a research associate at the Department of Mathematics and Department of Biomolecular Chemistry, University of Wisconsin-Madison. 
\end{section}

\end{document}